\documentclass[a4paper,12pt]{amsart}
\usepackage{amsthm}
\usepackage{amssymb}
\usepackage{amsmath}
\usepackage{verbatim,enumerate,url}
\textwidth=6in \textheight=9in

\renewcommand{\thefootnote}{\fnsymbol{footnote}}

\newtheorem{theorem}{Theorem}[section]
\newtheorem{lemma}[theorem]{Lemma}

\newtheorem{proposition}[theorem]{Proposition}
\newtheorem{remark}[theorem]{Remark}

\newtheorem{example}[theorem]{Example}

\newtheorem*{example*}{Example}

\newtheorem*{theorem*}{Theorem}

\newtheorem*{remark*}{Remark}

\newtheorem{corollary}[theorem]{Corollary}
\newtheorem*{corollary*}{Corollary}

\newtheorem{definition}[theorem]{Definition}

\newtheorem*{definition*}{Definition}

\newtheorem*{notation*}{Notation}

\newtheorem{notation}[theorem]{Notation}

\numberwithin{equation}{section}

\gdef\myletter{}

\let\savetheequation\theequation
\def\theequation{\savetheequation\myletter}

\newcommand{\CC}{{\mathbb C}}

\newcommand{\RR}{{\mathbb R}}

\newcommand{\PP}{{\mathbb P}}

\newcommand{\NN}{{\mathbb N}}

\newcommand{\Res}{\mathrm{Res}}

\newcommand{\calA}{\mathcal{A}}
\newcommand{\calM}{\mathcal{M}}
\newcommand{\calB}{\mathcal{B}}
\newcommand{\calF}{\mathcal{F}}
\newcommand{\calG}{\mathcal{G}}

\newcommand{\calI}{\mathcal{I}}
\newcommand{\calC}{\mathcal{C}}

\newcommand{\bA}{\mathbf{A}}
\newcommand{\bR}{\mathbf{R}}
\newcommand{\bW}{\mathbf{W}}
\newcommand{\bZ}{\mathbf{Z}}

\newcommand{\Van}{{\mathrm{Van}}}

\renewcommand{\O}{{\mathcal O}}

\newcommand{\be}{{\bf e}}

\def \hat{\widehat}

\def \bw{{\bf w}}
\def \b0{{\bf 0}}
\def \lot{\hbox{\it l.o.t.}}

\def \span{\mathrm{span}}

\long\def\symbolfootnote[#1]#2{\begingroup%
\def\thefootnote{\fnsymbol{footnote}}\footnote[#1]{#2}\endgroup}

\begin{document}

\title[Transfinite diameter on the graph of a polynomial mapping]{Transfinite diameter on the graph of a polynomial mapping and the DeMarco-Rumely formula}
\author{Sione Ma`u}

\address{Department of Mathematics,
University of Auckland,
Auckland, NZ}
\email{s.mau@auckland.ac.nz}

\begin{abstract}
We study Chebyshev constants and transfinite diameter on the graph of a polynomial mapping $f\colon\CC^2\to\CC^2$.   We show that two transfinite diameters of a compact subset of the graph (i.e., defined with respect to two different collections of monomials) are equal when the set has a certain symmetry.  As a consequence, we give a new proof in $\CC^2$ of a pullback formula for transfinite diameter due to DeMarco and Rumely that involves a homogeneous resultant.

\end{abstract}

\maketitle

\section{Introduction and summary}

In this paper we study transfinite diameter on a graph, i.e., an affine algebraic variety 
\begin{equation}\label{eqn:Vf}V=\{(w,z)\in\CC^2\times\CC^2\colon w=f(z)\}\end{equation} where  $f\colon\CC^2\to\CC^2$ is a polynomial mapping.   The main goal of the paper is to give a direct proof  of the pullback formula of DeMarco and Rumely for transfinite diameter in $\CC^2$.   Their main result in \cite{demarcorumely:transfinite} is the following:

\begin{theorem*}%[see Theorem \ref{thm:resthm}]
Let $f:\CC^N\to\CC^N$ be a regular polynomial map of degree $d$ (where $d\in\NN$).\footnote{See the beginning of Section \ref{sec:pullback} for the definition of a regular polynomial map in $\CC^2$.}  For any compact set $K\subset\CC^N$, we have the formula
$$
d(f^{-1}(K)) = |\Res(\hat f)|^{-1/(Nd^N)}d(K)^{1/d},
$$ 
where $\hat f$ denotes the degree $d$ homogeneous part of $f$.
\end{theorem*}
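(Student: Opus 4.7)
The plan is to realize both $K$ and $f^{-1}(K)$ as images, under two different projections, of a single compact set $L$ living on the graph $V = \{(w,z)\in\CC^2\times\CC^2 : w = f(z)\}$, and then to compute the transfinite diameter of $L$ on $V$ in two ways using two distinct collections of monomials on $\CC[V]$. Concretely, set $L = \{(f(z),z) : z\in f^{-1}(K)\}$. The projection $\pi_z\colon V\to\CC^2_z$, $(w,z)\mapsto z$, is a biholomorphism sending $L$ to $f^{-1}(K)$, while the projection $\pi_w\colon V\to\CC^2_w$, $(w,z)\mapsto w$, is a proper branched cover of degree $d^2$ (this is where the regularity of $f$ is used) sending $L$ onto $K$.

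On $V$, because $w=f(z)$, one has two natural monomial systems spanning the coordinate ring: the $z$-monomials $\{z^\alpha\}$ graded by $|\alpha|$, and the $w$-monomials $\{w^\beta\}$, which become polynomials in $z$ of degree $d|\beta|$ when restricted to $V$. The first system is tailored to computing a transfinite diameter of $L$ ``from the $f^{-1}(K)$ side,'' while the second is tailored to computing one ``from the $K$ side.'' The first monomial-ordered transfinite diameter $d_z(L)$ will be matched, via the biholomorphism $\pi_z$, with $d(f^{-1}(K))$; the second, $d_w(L)$, will be matched, via the degree-$d^2$ cover $\pi_w$ and the intrinsic degree-$d$ rescaling, with $d(K)^{1/d}$.

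The next step is to apply the main equality theorem of the paper to conclude $d_z(L)=d_w(L)$. The symmetry hypothesis required there can be arranged by first replacing $K$ with its circled (complete Reinhardt) hull, since standard transfinite diameter is unchanged by such symmetrizations and the pullback of a circled set under a homogeneous-like map retains enough symmetry to transfer to $L$. With the two computations of the transfinite diameter of $L$ equated, the formula reduces to a careful bookkeeping of the change-of-basis between the two monomial systems on $\CC[V]$.

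The principal obstacle is the final bookkeeping step: identifying the asymptotic (Vandermonde-type) determinantal ratio that converts the $w$-leading-term matrix into the $z$-leading-term matrix as a power of $|\Res(\hat f)|$. The idea is that on $V$, the top-degree part of $w^\beta=f(z)^\beta$ in $z$ is $\hat f(z)^\beta$, so the transition matrix between the two graded bases is, to leading order, determined entirely by $\hat f$. The large-degree limit of its determinant should then be computed via the classical product formula for the homogeneous resultant of $\hat f$, and the normalization constants (the $N=2$ factor and the $d^N=d^2$ sheets) should combine to produce precisely the exponent $1/(Nd^N)=1/(2d^2)$, yielding the DeMarco--Rumely identity.
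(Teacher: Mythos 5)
Your overall framework is correct and matches the paper's: lift to the graph $V$, set $L=\{(w,z)\in V\colon w\in K\}$ so that $L_z=f^{-1}(K)$ and $L_w=K$, and compare transfinite diameters on $V$ computed with different monomial systems.  However, there is a structural gap that makes the argument as stated break down, plus two incidental errors.

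The structural gap is that you have only two monomial systems, and you invoke ``the main equality theorem of the paper'' to conclude that the $z$-monomial transfinite diameter equals the $w$-monomial one.  That is not what the invariance theorem (Theorem \ref{thm:invariance}) says, and if it did say that, the formula would read $d(f^{-1}(K))=d(K)^{1/d}$ with no resultant factor, which is false.  The paper works with \emph{three} systems: the $z$-monomials giving $d^{(1)}(L)$, the \emph{normal form} basis $\calB$ of $\CC[w,z]_V$ giving $d^{(2)}(L)$, and the $w$-monomials giving $d^{(3)}(L)$.  The invariance theorem equates $d^{(2)}(L)$ with $d^{(3)}(L)$; the resultant enters precisely in the comparison $d^{(1)}(L)\leftrightarrow d^{(2)}(L)$, obtained by substituting $w=f(z)$ into the mixed basis monomials and tracking the resulting change of variables (Proposition \ref{prop:613}).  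Your proposal collapses the two nontrivial comparisons into one and then asks the resultant to come out of ``bookkeeping'' after the equality has already been asserted, which is logically inconsistent.  Related to this, the claim that the $w$-monomials $\{w^\beta\}$ span the coordinate ring $\CC[V]$ is incorrect: by Lemma 2.2 a monomial basis for $\CC[V]$ has the form $\{w^\alpha z^\beta\colon z^\beta\in\calI\}$ with $\calI$ a finite set of $z$-monomials that generically has $d^2$ elements, so $\CC[w]$ is a proper subspace and the normal form basis is an essential intermediary.

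The two incidental errors concern the symmetry step.  First, no symmetrization of $K$ is needed: Proposition \ref{prop:25} shows that $L=\{(w,z)\in V\colon w\in K\}$ is $\pi(D)$-invariant automatically (the criterion $L_z=f^{-1}(L_w)$ holds by construction), so Theorem \ref{thm:invariance} applies directly to $L$.  Second, replacing $K$ by its circled (Reinhardt) hull does change the Fekete--Leja transfinite diameter in general; for instance a single point $\{a\}\subset\CC$ with $a\neq 0$ has transfinite diameter $0$, while its circled hull (the circle of radius $|a|$) has transfinite diameter $|a|$.  So the proposed symmetrization is both unnecessary and invalid as a reduction.
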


The original proof utilized deep results in complex dynamics and arithmetic intersection theory; specifically, a formula of Bassanelli and Berteloot relating the resultant of a regular homogeneous polynomial mapping on $\CC^N$ to the Green current of the induced mapping on $\PP^{N-1}$ \cite{bassanelliberteloot:bifurcation}; a formula of Rumely relating the transfinite diameter of $K$ to its Robin function; and a pullback formula for a global sectional capacity developed by Rumely and his collaborators \cite{rumelylauvarley:existence}.  Using these ingredients, the theorem was first proved in the special case where $K$ is the Julia set of a polynomial mapping, then extended to general compact sets by some clever approximation arguments.

In Theorem \ref{thm:resthm} we prove the case $N=2$ using only basic notions of pluripotential theory and Zaharjuta's techniques for estimating Vandermonde determinants,  combined with computational algebraic geometry and linear algebra.  The resultant term in the pullback formula appears naturally as a result of eliminating variables, and connects the DeMarco-Rumely formula (at least the $\CC^2$ version of it) more directly to the ideas of elimination theory.  A general proof (in $\CC^N$) using the methods of this paper seems too unwieldly as the resultant becomes more and more complicated in higher dimensions.

We now give a summary of the paper.  Let $V$ be the graph of a polynomial mapping $f$ as in (\ref{eqn:Vf}).  A polynomial in the coordinate ring $\CC[V]$ may be identified with a polynomial in $\CC[z]$ by elimination.  It may also be identified with a normal form in $\CC[w,z]_V$, defined in Section \ref{sec:nform}.

In Section \ref{sec:cheby} we study directional Chebyshev constants on $V$ defined by a limiting process using normal forms.  The methods are similar to \cite{coxmau:transfinite}, but simpler.  A technical  algebraic  condition (property $(\star)$) is useful to establish convergence.

In Section \ref{sec:invariance} we study the symmetry of sets of the form 
\begin{equation}\label{eqn:Lintro}L=\{(w,z)\in V \colon w\in K\}\end{equation} where $K\subset\CC^2$ is compact. Our main theorem of this section (Theorem \ref{thm:invariance}) uses this symmetry to show that two different types of directional Chebyshev constants are equivalent.

In Section \ref{sec:transfinite} we study transfinite diameters $d^{(1)}(K)$, $d^{(2)}(K)$ and $d^{(3)}(K)$ of a set $K\subset V$ using various collections of monomials. %when $f$ is a \emph{regular} polynomial mapping.  
  The main theorem (Theorem \ref{thm:zaharj}) gives integral formulas for each of $d^{(j)}(K)$ in terms of directional Chebyshev constants.  Actually, a proof is only sketched for $d^{(2)}(K)$, since $d^{(1)}(K),d^{(3)}(K)$ may be identified with the classical (Fekete-Leja) transfinite diameters of the projections to the $z$ and $w$ axes respectively, whose formulas were established by Zaharjuta \cite{zaharjuta:trans}.   When $L$ is as in (\ref{eqn:Lintro}), these formulas together with Theorem \ref{thm:invariance} yield $d^{(2)}(L)=d^{(3)}(L)$ as a consequence.

In Section \ref{sec:pullback} we continue with $V$ being the graph of a regular polynomial mapping and $L$ as in Section \ref{sec:invariance}.  We use $w=f(z)$ to substitute for the $w$ in terms of $z$ in a basis $\calC$ of monomials on $V$, thereby   relating $d^{(1)}(L)$ to $d^{(2)}(L)$.  Powers of the resultant $\Res(\hat f)$ (where $\hat f$ is the  leading homogeneous part of $f$) arise in the process. When $L$ is as in (\ref{eqn:Lintro}) this gives a relation between $d^{(1)}(L)$ and $d^{(3)}(L)$.  Identifying these last two quantities with the Fekete-Leja transfinite diameters of the projections to $z$ and $w$ respectively, we recover the DeMarco-Rumely formula in $\CC^2$.  

\section{Normal Form}\label{sec:nform}
We consider an affine variety given by a graph over $\CC^2$ of polynomials of degree $d_1\geq d_2\geq 1$.    Let 
\begin{equation}\label{eqn:f}
f_1(z):= \sum_{\substack{j,k\geq 0\\ j+k\leq d_1}} a_{jk}z_1^iz_2^j,\quad f_2(z):=\sum_{\substack{j,k\geq 0\\ j+k\leq d_2}} b_{jk}z_1^jz_2^k,
\end{equation}
be irreducible polynomials 
and let 
\begin{equation}\label{eqn:V}
V=\{(w,z)\in\CC^2\times\CC^2\colon w_1=f_1(z),\  w_2=f_2(z)\}.
\end{equation}
Also, define the polynomial map $f=(f_1,f_2)$.  Then  $V$ is the graph of $w=f(z)$.  Let $\CC[V]$ denote the coordinate ring of $V$, i.e., the polynomials restricted to $V$. 

\begin{lemma}
The monomials $w^{\alpha}=w_1^{\alpha_1}w_2^{\alpha_2}$ are linearly independent in $\CC[V]$.
\end{lemma}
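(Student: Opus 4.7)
The plan is to identify $\CC[V]$ with $\CC[z_1, z_2]$ via the graph isomorphism, reduce the question to the algebraic independence of $f_1, f_2$ in $\CC[z_1, z_2]$, and then settle the latter by a top-degree argument.

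Because $V$ is the graph of $w = f(z)$, the projection $\pi_z \colon V \to \CC^2$ onto the $z$-coordinate is a polynomial isomorphism; its inverse $z \mapsto (f(z), z)$ pulls back to a $\CC$-algebra isomorphism $\CC[V] \xrightarrow{\sim} \CC[z_1, z_2]$ sending (the class of) $w_i$ to $f_i(z)$. Under this isomorphism the monomial $w^\alpha$ corresponds to $f_1(z)^{\alpha_1} f_2(z)^{\alpha_2}$, so the lemma is equivalent to the claim that the family $\{f_1^{\alpha_1} f_2^{\alpha_2}\}_{\alpha \in \NN^2}$ is linearly independent over $\CC$ in $\CC[z_1, z_2]$, i.e., that $f_1$ and $f_2$ are algebraically independent.

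Next I would argue via leading homogeneous parts: since $\deg f_i = d_i \geq 1$ and $\CC[z]$ is a domain, $f_1^{\alpha_1} f_2^{\alpha_2}$ has degree exactly $\alpha_1 d_1 + \alpha_2 d_2$ with leading form $\hat f_1^{\alpha_1} \hat f_2^{\alpha_2}$. Assume for contradiction a nontrivial relation $\sum_\alpha c_\alpha f_1^{\alpha_1} f_2^{\alpha_2} = 0$ and let $D$ be the largest degree supporting a nonzero $c_\alpha$. Extracting the degree-$D$ homogeneous component of both sides gives
\[
\sum_{\alpha_1 d_1 + \alpha_2 d_2 = D} c_\alpha \hat f_1^{\alpha_1} \hat f_2^{\alpha_2} = 0;
\]
iterating this at successively lower degrees reduces the lemma to linear independence of the pure leading products $\hat f_1^{\alpha_1} \hat f_2^{\alpha_2}$, equivalently to algebraic independence of $\hat f_1$ and $\hat f_2$.

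The main obstacle is this last step, since algebraic independence of $\hat f_1, \hat f_2$ does not follow from irreducibility of $f_1, f_2$ alone. It does go through cleanly once $f$ is assumed to be regular in the sense of later sections: then $\hat f_1$ and $\hat f_2$ share only the trivial common zero and so induce a finite, dominant homogeneous map $\hat f \colon \CC^2 \to \CC^2$; any putative relation $P(\hat f_1, \hat f_2) \equiv 0$ would then force $P$ to vanish on the dense image $\hat f(\CC^2)$, hence $P \equiv 0$, and the linear independence follows.
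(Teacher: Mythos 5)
Your route is genuinely different from the paper's, and there is a gap beyond the one you flag. The paper argues via the $w$-projection: if $R(w)=\sum_\alpha c_\alpha w^\alpha$ vanishes on $V$, it vanishes, as a polynomial in $w$, on the image of the projection $(w,z)\mapsto w$; restricting to a connected open $U\subset V$ on which that projection is biholomorphic onto its open image $U_w\subset\CC^2$, one finds $R\equiv 0$ on $U_w$, hence $R$ is the zero polynomial. No leading forms, and no regularity.

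Your first reduction, via the $z$-projection isomorphism $\CC[V]\cong\CC[z]$, to algebraic independence of $f_1,f_2$ is sound. But the second reduction, to algebraic independence of $\hat f_1,\hat f_2$, is strictly stronger and can fail even when the lemma holds. Take $f_1=z_1^2+z_2^2+1$ and $f_2=z_1^2+z_2^2+z_1$: both are irreducible, and $f_1, f_2$ are algebraically independent (their difference recovers $z_1$, whence also $z_2^2$), so the lemma is true; yet $\hat f_1=\hat f_2=z_1^2+z_2^2$ are algebraically dependent, and your top-degree extraction collapses to $\bigl(\sum_{|\alpha|=D/2}c_\alpha\bigr)(z_1^2+z_2^2)^{D/2}=0$, which kills no individual coefficient. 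Invoking regularity would repair the last step, but (as you observe) that hypothesis is not in force in Section \ref{sec:nform}, and the example shows it is not needed. The paper's open-image argument avoids leading forms entirely and does not require it.
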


\begin{proof}
Consider  a linear combination
$$
R(w):=\sum_{\alpha} c_{\alpha}w^{\alpha}=0.
$$
Let $U\subset V$ be a connected open set on which the projection $V\ni(w,z)\mapsto w\in \CC^2$ is holomorphic.  Using this, identify $R(w)$ with a holomorphic function on $U_w\subset\CC^2$ which is the image of $U$ under the projection.  Since $R(w)$ is identically zero on $U_w$ and $U_w$ is open, we must have $c_{\alpha}=0$ for all $\alpha$.  
\end{proof}

We now construct a basis $\calB$ of monomials on $V$ using a grevlex ordering.  Order the  monomials in $\CC[w,z]=\CC[w_1,w_2,z_1,z_2]$ first by total degree: $w^{\alpha}z^{\beta}\prec w^{\tilde\alpha}z^{\tilde\beta}$ whenever $|\alpha|+|\beta|<|\tilde\alpha|+|\tilde\beta|$.  Then order monomials with the same total degree lexicographically, according to $w_1\prec w_2\prec z_1 \prec z_2$.

The basis $\calB$ is then constructed by going through the monomials listed in increasing grevlex order, and removing any monomial that is linearly dependent with respect to earlier monomials in the list.  The \emph{normal form} of a polynomial $p\in\CC[V]$ is its representation as a linear combination of basis elements. 
% Normal forms have the structure of a module over $\CC[w]$.
% Since all monomials in $w$ are listed first 

\begin{lemma}%\label{lem:22}
Every monomial in $\CC[w]$  is an element of $\calB$.  Moreover, there is a finite collection of monomials $\calI\subset\CC[z]$ (which includes $1$) such that any monomial in $\calB$ is of the form $z^{\beta}w^{\alpha}$ for some $z^{\beta}\in\calI$.  Hence a normal form is an element of
$$
\bigoplus_{z^{\beta}\in\calI} z^{\beta}\CC[w]
$$
\end{lemma}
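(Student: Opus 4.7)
The plan hinges on the grevlex ordering's placement of pure $w$-monomials together with an analysis via the weighted $z$-degree $W(w^\gamma z^\beta) := \gamma_1 d_1 + \gamma_2 d_2 + |\beta|$, which coincides with the $z$-degree of the image $f(z)^\gamma z^\beta$ under the embedding $\CC[V]\hookrightarrow\CC[z]$ given by $w\mapsto f(z)$.

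\emph{Every pure $w$-monomial lies in $\calB$.} The lex convention $w_1\prec w_2\prec z_1\prec z_2$ places every pure $w$-monomial of total degree $k$ strictly before any other monomial of total degree $k$, since pure $w$-monomials have the smallest exponents in the lex-largest variables $z_1,z_2$. Hence the monomials preceding $w^\alpha$ split into (a) pure $w$-monomials $w^{\alpha'}$ with $\alpha'\prec\alpha$, and (b) mixed monomials $w^\gamma z^\beta$ (with $\beta\neq 0$) of total degree strictly less than $|\alpha|$. Supposing a dependence $w^\alpha=\sum c_i m_i$ in $\CC[V]$, I would substitute $w=f(z)$ and extract the top-$W$-homogeneous part; modulo the leading-form ideal $\langle w_j-\hat f_j(z)\rangle$, this reduces to a relation $\hat f(z)^\alpha=\sum c_i \hat f(z)^{\gamma_i} z^{\beta_i}$ in $\CC[z]$. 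Using algebraic independence of $\hat f_1,\hat f_2$ together with the constraints on $(\gamma_i,\beta_i)$ imposed by the predecessor relation, this relation must be trivial, contradicting $w^\alpha\neq 0$ in $\CC[V]$.

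\emph{Finiteness of $\calI$.} I would produce a Gr\"obner basis $G$ of $I=\langle w_1-f_1(z),\,w_2-f_2(z)\rangle$ in $\CC[w,z]$ with respect to the grevlex order. Since $\calB$ consists exactly of the standard monomials (those not divisible by any $\LT(g)$ for $g\in G$), it suffices to bound the degrees of the $z$-parts of the standard monomials. Iterated S-polynomial reductions on $w_j-f_j(z)$ yield elements of $I$ whose leading monomials are pure $z$-monomials of bounded degree (bounded by a function of $d_1,d_2$), so only finitely many $z^\beta$ survive as $z$-parts in $\calB$, giving the finite set $\calI$. This ultimately reflects module-finiteness of $\CC[V]$ over $\CC[w]$.

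\emph{Main obstacle.} The subtle step is Part 1 when $d_1=d_2$: several pure $w$-monomials of a given total degree share the same $W$-weight, so one cannot isolate them one at a time by weight alone. The fix is to invoke linear independence of the monomials $\hat f_1^{\alpha_1}\hat f_2^{\alpha_2}$ in $\CC[z]$, which follows from algebraic independence of $\hat f_1,\hat f_2$ (equivalently, non-vanishing of the homogeneous resultant). For Part 2, finiteness of $\calI$ implicitly requires that $f$ be a finite morphism---the regularity hypothesis later introduced in Section \ref{sec:pullback}.
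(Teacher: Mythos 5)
The paper's proof is quite short and does not take the route you propose. For Part 1 it simply notes that the grevlex tiebreak lists all pure $w$-monomials of a given total degree before any monomial containing $z$ of that degree, and that the $w^{\alpha}$ are already known to be linearly independent in $\CC[V]$ by the preceding lemma; it does not substitute $w=f(z)$ or extract a top-weight part. For Part 2 the paper observes that for $g_1,g_2\in\CC[z]$, the elements $g_1(f_1-w_1)+g_2(f_2-w_2)$ and $g_1f_1+g_2f_2$ have the same leading term, so leading monomials of $\langle f_1,f_2\rangle\subset\CC[z]$ are also leading monomials of $I$; the complement (the standard monomials in $\CC[z]$) is finite because $f_1,f_2$ have finitely many common zeros in $\CC^2$. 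Your Gr\"obner/S-polynomial reformulation is the same mechanism dressed up differently, so Part 2 is essentially the paper's argument.

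There are, however, concrete problems with the proposed filling-in of Part 1. First, your claim that algebraic independence of $\hat f_1,\hat f_2$ is \emph{equivalent} to non-vanishing of $\Res(\hat f)$ is false: for two homogeneous forms of the same degree in two variables, algebraic dependence is equivalent to proportionality $\hat f_1=c\hat f_2$, which is strictly stronger than sharing a common linear factor. For instance $\hat f_1=z_1^2$ and $\hat f_2=z_1z_2$ are algebraically independent while $\Res(\hat f)=0$. Since your Part 1 argument only uses linear independence of $\{\hat f_1^{\alpha_1}\hat f_2^{\alpha_2}\}_{|\alpha|=n}$, what it really requires is non-proportionality, not regularity; identifying the two hypotheses is a genuine error. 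Second, your top-$W$-weight extraction does not isolate $\hat f^{\alpha}$ when $d_1>d_2$: a mixed monomial $w^{\gamma}z^{\beta}$ with $|\gamma|+|\beta|<|\alpha|$ can have weight $\gamma_1d_1+\gamma_2d_2+|\beta|$ strictly larger than $\alpha_1d_1+\alpha_2d_2$ (e.g.\ $d_1=3$, $d_2=1$, $\alpha=(0,3)$ has $W=3$, while the degree-$2$ monomial $w_1z_1$ has $W=4$). So the top homogeneous part of the putative relation need not be supported at $w^{\alpha}$ at all, and the triviality conclusion you draw is not immediate; you would have to peel off several weight strata before reaching $\hat f^{\alpha}$. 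Finally, for Part 2, the paper's finiteness comes from the zero-dimensionality of $\langle f_1,f_2\rangle\subset\CC[z]$ (finitely many common zeros of $f_1,f_2$ in $\CC^2$), which holds for any two distinct irreducible $f_1,f_2$ and has nothing to do with regularity of $\hat f$; your statement that finiteness of $\calI$ ``implicitly requires'' the regularity hypothesis of Section \ref{sec:pullback} conflates finiteness of the affine common zero set with properness of $f$ at infinity.
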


\begin{proof}
In the linear case ($d_1=d_2=1$) any monomial containing $z_1,z_2$ is removed, and a normal form is simply an element of $\CC[w]$.

When $d_1\geq d_2>1$, then within each fixed total degree, all monomials of the form $w^{\alpha}$ are not removed since they are linearly independent by the previous lemma and listed ahead of monomials containing $z$ for that same degree.  The monomials that are removed are monomials containing $z$ that are leading monomials of elements of the ideal $I=\langle f_1-w_1,f_2-w_2\rangle\subset\CC[w,z]$.  Observe that the two expressions
$$g_1(f_1-w_1) +g_2(f_2-w_2),\quad g_1f_1+g_2f_2\qquad (\hbox{where } g_1,g_2\in\CC[z])$$
have the same leading terms in our grevlex ordering.  In other words, leading monomials of elements of $\langle f_1,f_2\rangle$ as an ideal in $\CC[z]$ are also leading monomials of elements of $I$.   The number of monomials that are not leading monomials of elements of $\langle f_1,f_2\rangle$  is finite because the number of points in the common zero set of $f_1,f_2$ in $\CC^2$ is finite (see e.g. \cite{coxlittleoshea:ideals}, 5\S 3).\footnote{When $I$ is a radical ideal the two numbers are equal.}   This finite set of monomials, by definition, is $\calI$.  
\end{proof}

\begin{example}\label{ex:13}  \rm 
Suppose 
$$\begin{aligned}
f_1(z)&=a_dz_1^d+a_{d-1}z_1^{d-1}z_2+\cdots+a_0z_2^d+\lot,\\
f_2(z)&=b_dz_1^d+b_{d-1}z_1^{d-1}z_2+\cdots+b_0z_2^d+\lot
\end{aligned}$$
where $\lot$ denotes terms of total degree strictly less than $d$.  For a generic choice of the coefficients $a_j,b_j$,
$\calI$ is the set of monomials not in $\langle z_1z_2^{d-1}, z_2^d\rangle$.  
\end{example}

\begin{notation}\rm Normal forms constructed with respect to the basis $\calB$ will be denoted $\CC[w,z]_V$.  
\end{notation}

 \section{Chebyshev constants}\label{sec:cheby}

%A normal form $p\in\CC[w,z]_V$ has the structure of an $\CC[w]$-algebra. 
Let $V$ be defined as in (\ref{eqn:f}), (\ref{eqn:V}).  Let $\calI$  denote the finite collection of multi-indices $\beta$ such that $w^{\alpha}z^{\beta}\in\CC[w,z]_V$ for some multi-index $\alpha\in\NN_0^2$.  We will assume that the basis of monomials $\calB:=\{w^{\alpha}z^{\beta}\colon \alpha\in\NN_0^2,\ \beta\in\calI\}$ is ordered so that the following property holds:
\begin{itemize}
\item[($\star$)] For each $\beta\in\calI$ there exists an index $\tilde\beta$ and a constant $C_{\beta}\in\RR$ such that 
$
z^{\beta}\cdot z^{\tilde\beta} = C_{\beta}w^{\gamma} +\lot %\quad \hbox{for some }\gamma\in\NN_0^2.
$ for some $\gamma\in\NN_0^2$.  
\end{itemize}
Here $\lot$ refers to terms of lower order according to our ordering. 

\begin{example}\label{ex:21}  \rm
Suppose $V$ is given by $w=f(z)$ where $f=(f_1,f_2)$ with components of degree $d$ as in Example \ref{ex:13}.  Consider the ordering on monomials in $\CC[w,z]_V$ for which $w^{\alpha}z^{\beta}\prec w^{\tilde\alpha}z^{\tilde\beta}$ if 
\begin{itemize}
\item $d|\alpha|+|\beta|<d|\tilde\alpha|+|\tilde\beta|$; or
\item $d|\alpha|+|\beta|=d|\tilde\alpha|+|\tilde\beta|$ and $\alpha\prec\tilde\alpha$; or
\item $d|\alpha|+|\beta|=d|\tilde\alpha|+|\tilde\beta|$, $\alpha=\tilde\alpha$ and $\beta\prec\tilde\beta$.  
\end{itemize}
In the generic case of Example \ref{ex:13},  $(\star)$ holds, and we can take $z^{\tilde\beta}$ to be a sufficiently large power of $z_2$.

We illustrate the case $d=2$; here  
$$w_1=a_2z_1^2+a_1z_1z_2+a_0z_2^2+\lot, \ w_2=b_2z_1^2+b_1z_1z_2+b_0z_2^2+\lot. $$
We have $b_0w_1-a_0w_2= (a_1b_0-a_0b_1)z_1z_2+\lot$.  Generically, $a_1b_0-a_0b_1\neq 0$, which means $z_1z_2$ and $z_2^2$ are not basis monomials, and we have $\calI=\{1,z_1,z_2,z_1^2\}$.  The first few monomials listed according to the ordering are
$$
1,z_1,z_2,z_1^2,w_1,w_2,z_1w_1,z_2w_1,z_1w_2,z_2w_2,z_1^2w_1,z_1^2w_2,w_1^2,w_1w_2,w_2^2,\ldots
$$
To verify $(\star)$, take, say, the monomial $z_1^2$; then multiplying by $z_2^2$, we obtain
$$
z_1^2\cdot z_2^2 = (z_1z_2)^2 = (b_0w_1 + a_0w_2)^2+ \lot = a_0^2w_2^2 + \lot.
$$
For the monomials $1,z_1,z_2$, multiplying by $z_2$ works.
\end{example}

   Let $K\subset V$ be a compact set.  
Define $Y_K\colon \NN_0^2\to[0,\infty)$ by 
 $$
Y_K(\alpha) := \inf\{\|p\|_K\colon p\in\CC[w,z]_V,\ p(w,z) = w^{\alpha} +  \lot  \}.
$$
It is easy to verify that 
$Y_K$ is \emph{submultiplicative}, i.e., 
$$
Y_K(\alpha+\beta)\leq Y_K(\alpha)Y_K(\beta) \hbox{ for all } \alpha,\beta\in\NN_0^2.
$$

Before stating the next result we first recall some notation.  Let $\Sigma\subset\RR^2$ denote the closed line segment joining $(1,0)$ to $(0,1)$, and let $\Sigma^{\circ}=\Sigma\setminus\{(1,0),(0,1)\}$.  Let $h(s)$ denote the number of monomials of degree $s$, i.e., the number of elements in $\{ \alpha\in\NN_0^2\colon  |\alpha|=s\}$.  
As a consequence of the limiting properties of submultiplicative functions (see \cite{bloomlev:weighted}), we have the following result.

\begin{proposition}\label{prop:11}
The limit 
$$
T_{\calB}(K,\theta):=\lim_{\substack{|\alpha|\to\infty\\ \alpha/|\alpha|\to\theta}} Y_K(\alpha)^{1/|\alpha|}
$$
exists for each $\theta\in \Sigma^{\circ}$, and $\theta\mapsto T_{\calB}(K,\theta)$ defines a logarithmically convex function on $\Sigma^{\circ}$.  Moreover, we have the convergence
$$
\frac{1}{h(s)}\sum_{|\alpha|=s}\log Y_K(\alpha)^{1/|\alpha|} \ \to \  \int_{0}^1\log T_{\calB}(K,\theta(t))dt \quad\hbox{as }s\to\infty,  
$$
where $\theta(t)=(t,1-t)$.  \qed
\end{proposition}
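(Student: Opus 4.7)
The plan is to derive Proposition~\ref{prop:11} as an instance of the standard Zaharjuta/Bloom--Levenberg machinery for submultiplicative functions on $\NN_0^2$ (cf.~\cite{bloomlev:weighted}); the task is to verify the hypotheses and carry out the limiting arguments in our setting.

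The first step is submultiplicativity of $Y_K$. Given near-optimal normal forms $p=w^\alpha+\lot$ and $q=w^\beta+\lot$, the product $pq\in\CC[V]$, once reduced to normal form, still has $w^{\alpha+\beta}$ as its leading term. This uses Lemma~2.2: every pure $w$-monomial lies in $\calB$, so reducing $pq$ modulo $\langle f_1-w_1,f_2-w_2\rangle$ cannot cancel $w^{\alpha+\beta}$ against strictly lower-order monomials in the ordering of Example~\ref{ex:21}. Hence $Y_K(\alpha+\beta)\leq\|pq\|_K\leq\|p\|_K\|q\|_K$, and taking infima gives $\log Y_K$ subadditive on $\NN_0^2$.

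Existence of $T_\calB(K,\theta)$ for $\theta\in\Sigma^\circ$ is then the multi-index Fekete lemma: along a rational direction one applies the classical one-variable Fekete subadditive lemma to the ray $\alpha_k=k\alpha_0$, and subadditivity interpolates to nearby multi-indices, giving convergence as $|\alpha|\to\infty$ with $\alpha/|\alpha|\to\theta$. Log-convexity of $\theta\mapsto\log T_\calB(K,\theta)$ is then immediate: if $\alpha_k/|\alpha_k|\to\theta_1$, $\beta_k/|\beta_k|\to\theta_2$ and $|\alpha_k|/(|\alpha_k|+|\beta_k|)\to s$, submultiplicativity yields
\[
\tfrac{\log Y_K(\alpha_k+\beta_k)}{|\alpha_k|+|\beta_k|}\leq\tfrac{|\alpha_k|}{|\alpha_k|+|\beta_k|}\cdot\tfrac{\log Y_K(\alpha_k)}{|\alpha_k|}+\tfrac{|\beta_k|}{|\alpha_k|+|\beta_k|}\cdot\tfrac{\log Y_K(\beta_k)}{|\beta_k|},
\]
which passes to the limit to give $\log T_\calB(K,s\theta_1+(1-s)\theta_2)\leq s\log T_\calB(K,\theta_1)+(1-s)\log T_\calB(K,\theta_2)$.

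For the integral formula, note that the sum $\frac{1}{h(s)}\sum_{|\alpha|=s}\log Y_K(\alpha)^{1/|\alpha|}$ is a Riemann-style average over the $s+1$ equally spaced directions $\theta(k/s)=(k/s,1-k/s)$, $k=0,\ldots,s$. Log-convexity gives continuity of $\log T_\calB(K,\theta(\cdot))$ on $(0,1)$, and combined with a submultiplicative bound $Y_K(\alpha)^{1/|\alpha|}\leq\max\{Y_K((1,0)),Y_K((0,1))\}$ (obtained by factoring $\alpha=\alpha_1 e_1+\alpha_2 e_2$), one obtains convergence to $\int_0^1\log T_\calB(K,\theta(t))\,dt$ by a truncated dominated-convergence argument on $[\epsilon,1-\epsilon]$. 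I expect the main obstacle to be controlling the endpoint directions $t=0,1$, where the directional limit may not exist or may be degenerate; the standard remedy is to show the boundary contributions are $o(1)$ as $\epsilon\to 0$, using the submultiplicative upper bound to dominate and the basis structure (property $(\star)$) to keep $Y_K$ from degenerating along the axes.
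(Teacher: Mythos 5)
The paper gives no written proof of Proposition~\ref{prop:11}: it asserts that $Y_K$ is submultiplicative (``easy to verify'') and then cites the theory of submultiplicative functions in Bloom--Levenberg \cite{bloomlev:weighted}, ending the statement with a \texttt{qed}. Your proposal follows exactly the same route -- submultiplicativity plus the standard Zaharjuta/Bloom--Levenberg machinery for existence of the directional limit, log-convexity, and the Riemann-sum passage to the integral -- so at the level of approach you are aligned with the paper.

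One correction of substance: your final paragraph suggests using property~$(\star)$ ``to keep $Y_K$ from degenerating along the axes.'' Property~$(\star)$ has nothing to do with Proposition~\ref{prop:11}. In the paper $(\star)$ is introduced solely to prove Proposition~\ref{prop:14}, i.e.\ to show that the modified Chebyshev quantities $Y_{K,\beta}$ (with leading term $w^\alpha z^\beta$, $\beta\in\calI$) have the same directional limits as $Y_K$ (leading term a pure $w$-monomial). It is a statement about trading a $z^\beta$ factor for a $w^\gamma$ factor and plays no role in the limiting theory for $Y_K$ itself. The boundary behaviour at $\theta\to(1,0)$ or $(0,1)$ is handled, as in Zaharjuta's original argument and in \cite{bloomlev:weighted}, purely from submultiplicativity: one has the uniform upper bound $Y_K(\alpha)^{1/|\alpha|}\leq\max\{Y_K(e_1),Y_K(e_2)\}$ you noted, the log-convexity of $T_\calB(K,\cdot)$ on $\Sigma^{\circ}$, and then the integral is taken with the understanding that it (and the Riemann averages) may equal $-\infty$; no extra algebraic input is needed or used.

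A smaller remark: your argument for submultiplicativity leans on the claim that reducing $pq$ to normal form ``cannot cancel $w^{\alpha+\beta}$ against strictly lower-order monomials,'' with ``order'' meaning the $(\star)$-compatible ordering rather than the grevlex order used to construct $\calB$. This is true but not automatic: one should observe that the $(\star)$-weight $d|\alpha|+|\beta|$ of a monomial $w^\alpha z^\beta$ equals the $z$-degree of $w^\alpha z^\beta$ after eliminating via $w=f(z)$, and that normal-form reduction cannot increase this degree; hence the reduction of any cross term of $pq$ stays of strictly lower $(\star)$-weight than $w^{\alpha+\beta}$, while $w^{\alpha+\beta}$ is already a basis monomial by Lemma~2.2 and so survives unchanged. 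With that observation your submultiplicativity step is sound; without it, ``grevlex-lower implies $(\star)$-lower'' is not literally true and the assertion is left hanging.
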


\begin{definition}\rm
We call the function $\theta\mapsto T_{\calB}(K,\theta)$ the \emph{Chebyshev transform} of $K$ with respect to the basis $\calB$.   A fixed $\theta$ is called a \emph{direction} and  $T_{\calB}(K,\theta)$ is a \emph{directional Chebyshev constant}.  
\end{definition}
The above terminology is based on \cite{nystrom:transforming} and \cite{bloomlev:transfinite}. 

\medskip

We can also replace the basis $\calB$ with the subset of monomials in $\CC[w]$ ordered by grevlex.    Let 
$$
Z_K(\alpha):=\inf\{\|p\|_K\colon p\in\CC[w],\ p(w) = w^{\alpha} +\lot \}.
$$
This is also submultiplicative and the analogue of Proposition \ref{prop:11} holds, replacing $Y_K$ by $Z_K$ and $T_{\calB}(K,\theta)$ by $T_{\CC[w]}(K,\theta)$, the Chebyshev transform of $K$ with respect to the monomials in $\CC[w]$, defined by the same kind of limit.

\begin{lemma}\label{lem:13}
\begin{enumerate}
\item We have $Z_K(\alpha)\geq Y_K(\alpha)$ for each $\alpha\in\NN_0^2$, and hence 
$$T_{\CC[w]}(K,\theta)\geq T_{\calB}(K,\theta) \hbox{ for each }\theta\in \Sigma^{\circ}.$$
\item Let $K_w$ denote the orthogonal projection of $K$ to the $w$-axis.  Then 
$$T_{\CC[w]}(K,\theta)=T_{\CC[w]}(K_w,\theta) \hbox{ for each } \theta\in \Sigma^{\circ}.$$
\end{enumerate}
\end{lemma}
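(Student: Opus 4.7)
For part (1), the plan is to observe that every polynomial $p(w)\in\CC[w]$ is already in normal form, since the preceding lemma states that every monomial in $\CC[w]$ belongs to the basis $\calB$. Hence the inclusion $\CC[w]\hookrightarrow\CC[w,z]_V$ sends a $w$-only polynomial to itself, and a representation $p(w)=w^\alpha+\lot$ in the grevlex ordering on $\CC[w]$ remains a representation of the form $w^\alpha+\lot$ in the $\calB$-ordering on $\CC[w,z]_V$. The key verification here is that the restriction of the full grevlex ordering (with $w_1\prec w_2\prec z_1\prec z_2$, graded by total degree) to pure $w$-monomials coincides with the grevlex ordering on $\CC[w]$, so no $w$-monomial below $w^\alpha$ in $\CC[w]$ can become a $\calB$-leading term dominating $w^\alpha$. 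Consequently the set of admissible competitors in the definition of $Y_K(\alpha)$ contains the set used to define $Z_K(\alpha)$, which gives $Y_K(\alpha)\le Z_K(\alpha)$ for every $\alpha\in\NN_0^2$.

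The inequality for Chebyshev transforms then follows by raising to the $1/|\alpha|$ power and taking the limit $|\alpha|\to\infty$ with $\alpha/|\alpha|\to\theta$. Since by Proposition \ref{prop:11} (and its $\CC[w]$-analogue) both limits exist, the pointwise inequality $Y_K(\alpha)^{1/|\alpha|}\le Z_K(\alpha)^{1/|\alpha|}$ passes to the limit and yields $T_{\calB}(K,\theta)\le T_{\CC[w]}(K,\theta)$ on $\Sigma^\circ$.

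For part (2), the plan is to note that for any polynomial $p\in\CC[w]$, the value $p(w,z)=p(w)$ depends only on the first two coordinates, so
\[
\|p\|_K=\sup_{(w,z)\in K}|p(w)|=\sup_{w\in K_w}|p(w)|=\|p\|_{K_w}.
\]
The definition of $Z_{(\cdot)}(\alpha)$ only records the set through the sup-norm of candidate polynomials $p(w)=w^\alpha+\lot$, so $Z_K(\alpha)=Z_{K_w}(\alpha)$ for all $\alpha$. Passing to the limit $\alpha/|\alpha|\to\theta$ gives $T_{\CC[w]}(K,\theta)=T_{\CC[w]}(K_w,\theta)$.

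Neither part presents a genuine obstacle; the only point requiring care is the compatibility of orderings in part (1), i.e. checking that lower-order terms of $p\in\CC[w]$ remain lower-order after embedding into $\CC[w,z]_V$, which is immediate from the grading-by-total-degree structure of the chosen ordering.
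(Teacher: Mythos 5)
The proposal is correct and takes essentially the same route as the paper: part (1) observes that every competitor $p(w)=w^\alpha+\lot$ for $Z_K(\alpha)$ is already a normal form and hence a competitor for $Y_K(\alpha)$, giving $Z_K(\alpha)\ge Y_K(\alpha)$ and the transform inequality after taking roots and limits; part (2) uses that $p\in\CC[w]$ is blind to the $z$-coordinates, so $\|p\|_K=\|p\|_{K_w}$. You add one useful explicit check that the paper leaves implicit --- that the "$\lot$" terms of $p$ in $\CC[w]$'s grading remain lower order under the $\CC[w,z]_V$ ordering --- but this is a refinement of the same argument, not a different approach.
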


\begin{proof}
Fix $\alpha\in\NN_0^2$ and let $p(w)=w^{\alpha} + \lot$  be a polynomial in $\CC[w]$ that satisfies $\|p\|_K=Z_K(\alpha)$.  Since $p\in\CC[w,z]_V$ also, $\|p\|_K\geq Y_K(\alpha)$ by definition.  The inequality of Chebyshev transforms follows by taking $|\alpha|$-th roots and a limit.  This proves the first statement.

For the second statement, observe that the $z$-components of a point do not enter into computation when evaluating polynomials in $\CC[w]$.  
\end{proof}

%Keeping the monomials in $\CC[w]$, we can also 

We return to $\calB$ again.  Given $\beta\in\calI$, let 
$$Y_{K,\beta}(\alpha):= \inf\{\|p\|_K\colon p\in\CC[w,z]_V,\ p(w,z) = w^{\alpha}z^{\beta} +  \lot  \}.$$ Using property ($\star$),  we show that $Y_{K,\beta}$ gives the same limit, independent of $\beta$. 

\begin{proposition}\label{prop:14}
Let $\theta\in \Sigma^{\circ}$ and $\beta\in\calI$.  Then 
$$
\lim_{\substack{|\alpha|\to\infty\\ \alpha/|\alpha|\to\theta}} Y_{K,\beta}(\alpha)^{1/|\alpha|} \ = \ T_{\calB}(K,\theta).
$$
Moreover, we have the convergence (put $\theta(t)=(t,1-t)$)
\begin{equation}\label{eqn:intconv}
\frac{1}{h(s)}\sum_{|\alpha|=s} \log Y_{K,\beta}(\alpha)^{1/|\alpha|} \ \to \ \int_0^1\log T_{\calB}(K,\theta(t))\, dt \quad\hbox{as }s\to\infty.
\end{equation}
\end{proposition}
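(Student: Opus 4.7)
The plan is to sandwich $Y_{K,\beta}(\alpha)$ between constant multiples of $Y_K(\alpha)$ and $Y_K(\alpha+\gamma)$, thereby reducing Proposition \ref{prop:14} to Proposition \ref{prop:11}. For the upper bound, I would take a near-optimizer $p = w^\alpha + \lot \in \CC[w,z]_V$ of $Y_K(\alpha)$, multiply by $z^\beta$, and put the result in normal form. Since $\beta\in\calI$ the leading monomial $w^\alpha z^\beta$ lies in $\calB$, and each cross term $z^\beta\cdot w^{\alpha'} z^{\beta'}$ (with $w^{\alpha'} z^{\beta'}\prec w^\alpha$) stays strictly below $w^\alpha z^\beta$ after reduction modulo $\langle f_1-w_1,f_2-w_2\rangle$, so $Y_{K,\beta}(\alpha)\le\|z^\beta\|_K\cdot Y_K(\alpha)$.

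For the lower bound I would exploit property $(\star)$: writing $z^\beta z^{\tilde\beta}=C_\beta w^\gamma+\lot$ in normal form with $C_\beta\neq 0$, take near-optimizers $p=w^\alpha z^\beta+\lot$ of $Y_{K,\beta}(\alpha)$ and $q=z^{\tilde\beta}+\lot$ of $Y_{K,\tilde\beta}(0)$, and expand $pq$. The leading monomial $w^\alpha z^{\beta+\tilde\beta}$ reduces to $C_\beta w^{\alpha+\gamma}$ plus strictly lower $\calB$-monomials, while all cross-contributions from the lower-order terms of $p$ and $q$ likewise reduce to monomials strictly below $w^{\alpha+\gamma}$. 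Dividing $pq$ by $C_\beta$ produces a candidate for $Y_K(\alpha+\gamma)$, giving $|C_\beta|\cdot Y_K(\alpha+\gamma)\le Y_{K,\beta}(\alpha)\cdot Y_{K,\tilde\beta}(0)$.

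Taking $|\alpha|$-th roots in both inequalities, the multiplicative constants contribute factors tending to $1$; since $(\alpha+\gamma)/|\alpha+\gamma|\to\theta$ and $|\alpha+\gamma|/|\alpha|\to 1$, Proposition \ref{prop:11} forces both $Y_K(\alpha)^{1/|\alpha|}$ and $Y_K(\alpha+\gamma)^{1/|\alpha|}$ to converge to $T_\calB(K,\theta)$, which gives the pointwise statement. For the integral statement I would sum both inequalities over $|\alpha|=s$ and divide by $h(s)$: the upper bound delivers $\limsup \le \int_0^1\log T_\calB(K,\theta(t))\,dt$ directly from Proposition \ref{prop:11}, and for the lower bound I would reindex $\alpha'=\alpha+\gamma$ and note that $\sum_{|\alpha|=s}\log Y_K(\alpha+\gamma)$ differs from $\sum_{|\alpha'|=s+|\gamma|}\log Y_K(\alpha')$ by at most $|\gamma|$ boundary terms of size $O(s)$; since $h(s+|\gamma|)/h(s)\to 1$ and $(s+|\gamma|)/s\to 1$, applying Proposition \ref{prop:11} at level $s+|\gamma|$ supplies the matching liminf.

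The main obstacle is the normal-form bookkeeping in the lower bound: confirming that neither the reduction of $w^\alpha z^{\beta+\tilde\beta}$ to $C_\beta w^{\alpha+\gamma}+\lot$ nor any of the cross-product reductions produces a monomial at or above $w^{\alpha+\gamma}$. This reduces to verifying that the ordering under which $(\star)$ holds is compatible with normal-form reduction, in the sense that any non-$\calB$ monomial is replaced by strictly smaller $\calB$-monomials; this is the standard Gr\"obner-basis feature built into the weighted grevlex of Example \ref{ex:21}, after which all remaining estimates are routine.
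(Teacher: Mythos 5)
Your approach is essentially the paper's: bound $Y_{K,\beta}(\alpha)$ above by multiplying a $Y_K(\alpha)$-optimizer by $z^{\beta}$, bound it below via property $(\star)$ by multiplying a $Y_{K,\beta}(\alpha)$-optimizer by $z^{\tilde\beta}$ to produce a candidate for $Y_K(\alpha+\gamma)$, and pass from the pointwise limits to the integral formula by summing and reindexing. One notational slip in your lower bound: you take a near-optimizer $q=z^{\tilde\beta}+\lot$ of ``$Y_{K,\tilde\beta}(0)$'', but $Y_{K,\cdot}$ is defined only for multi-indices in $\calI$, and $\tilde\beta$ generally lies outside $\calI$ --- e.g.\ in Example \ref{ex:21} with $d=2$ and $\beta=(2,0)$, the index $\tilde\beta=(0,2)$ gives $z_2^2\notin\calI=\{1,z_1,z_2,z_1^2\}$, so there is no normal form of the shape $z^{\tilde\beta}+\lot$. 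The paper sidesteps this by multiplying by the bare monomial $z^{\tilde\beta}$ (giving constant $\|z^{\tilde\beta}\|_K$) rather than a Chebyshev-optimized polynomial, and replaces $z^{\tilde\beta}$ by $z^{\tilde\beta}+1$ when that norm vanishes; with that substitution your argument coincides with the paper's. Your flag on ordering compatibility (that lower-order terms remain lower-order after multiplication) is the right thing to check and is what makes the weighted grevlex of Example \ref{ex:21} the appropriate choice; the paper leaves it implicit.
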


\begin{proof} Fix $\beta\in\calI$.  
We first show that 
\begin{equation}\label{eqn:prop13<}  \limsup_{\substack{|\alpha|\to\infty\\ \alpha/|\alpha|\to\theta}} Y_{K,\beta}(\alpha)^{1/|\alpha|} \leq  T_{\calB}(K,\theta).\end{equation} 

 Denote the lim sup on the left-hand side of (\ref{eqn:prop13<}) by $L$.  Let $\alpha_{(1)},\alpha_{(2)},\ldots$ be a sequence of bi-indices such that as $n\to\infty$, 
$$|\alpha_{(n)}|\to \infty,\ \alpha_{(n)}/|\alpha_{(n)}|\to\theta,\ \hbox{and } 
Y_{K,\beta}(\alpha_{(n)})^{1/|\alpha_{(n)}|} \to L.$$ 
 For each $n$, let $p_n=w^{\alpha_{(n)}}+\lot$ be a polynomial satisfying $\|p_n\|_K=Y_K(\alpha_{(n)})$, and let $q_n:=z^{\beta}p_n$.  Then $q_n=w^{\alpha_{(n)}}z^{\beta}+\lot$, so 
\begin{equation}\label{eqn:12}
Y_{K,\beta}(\alpha_{(n)})\leq \|q_n\|_K \leq \|p_n\|_K\|z^{\beta}\|_K = Y_K(\alpha_{(n)})\|z^{\beta}\|_K. 
\end{equation}
If $\|z^{\beta}\|_K>0$ then $\|z^{\beta}\|_K^{1/|\alpha_{(n)}|}\to 1$ as $n\to\infty$.  Taking $|\alpha_{(n)}|$-th roots in the above inequality, then the limit as $n\to\infty$, we obtain  (\ref{eqn:prop13<}).  

Using (\ref{eqn:12}), we also calculate that 
$$
\frac{1}{sh(s)}\sum_{|\alpha|=s}\log Y_{K,\beta}(\alpha) \  \leq \    \frac{1}{sh(s)}\Bigl(\sum_{|\alpha|=s}\log Y_{K}(\alpha)\Bigr)  \ +  \ \frac{|\beta|}{s}\log(\|z\|_K). 
$$
Taking the lim sup on both sides as $s\to\infty$, and using Proposition \ref{prop:11}, we obtain
\begin{equation}\label{eqn:intlimsup}
\limsup_{s\to\infty} \frac{1}{sh(s)}\sum_{|\alpha|=s}\log Y_{K,\beta}(\alpha) \ \leq \ \int_0^1 \log T_{\calB}(K,\theta(t))\, dt. 
\end{equation}

If $\|z^{\beta}\|_K=0$, we may handle this by letting  $q_n:=(z^{\beta}+1)p_n$ and replacing $z^{\beta}$ in the above arguments with $z^{\beta}+1$.  

We now show that
\begin{equation}\label{eqn:prop13>}  \liminf_{\substack{|\alpha|\to\infty\\ \alpha/|\alpha|\to\theta}} Y_{K,\beta}(\alpha)^{1/|\alpha|} \geq  T_{\calB}(K,\theta)\end{equation}
using a similar argument.  Denote the lim inf by $L$.

  Let $\alpha_{(1)},\alpha_{(2)},\ldots$ be such that
$$
|\alpha_{(n)}|\to \infty,\ \alpha_{(n)}/|\alpha_{(n)}|\to\theta,\ \hbox{and } Y_{K,\beta}(\alpha_{(n)})^{1/|\alpha_{(n)}|}\to  L.  
$$
For each $n$, let $p_n=z^{\beta}w^{\alpha_{(n)}} +\lot$ be a polynomial satisfying $\|p_n\|_K=Y_{K,\beta}(\alpha_{(n)})$.  Let $q_n:=C_{\beta}^{-1}z^{\tilde\beta}p_n$, where $\tilde\beta$ is chosen as in $(\star)$ so that $z^{\beta}\cdot z^{\tilde\beta}=C_{\beta}w^{\gamma}+\lot$ for some $\gamma$.  Then by construction, $q_n=w^{\alpha_{(n)}+\gamma}+\lot$, so
\begin{equation}\label{eqn:14}
Y_K(\alpha_{(n)}+\gamma)\leq\|q_n\|_K\leq \|p\|_K C_{\beta}^{-1}\|z^{\tilde\beta}\|_K = Y_{K,\beta}(\alpha_{(n)})\cdot C_{\beta}^{-1}\|z^{\tilde\beta}\|_K.\end{equation}
Now take $|\alpha_{(n)}|$-th roots in the above inequality and let $n\to\infty$.  Since $\gamma$ is fixed, it is easy to see that $(\alpha_{(n)}+\gamma)/|\alpha_{(n)}+\gamma|\to\theta$ as $n\to\infty$.  On the left-hand side we have 
$$
 \lim_{n\to\infty}Y_K(\alpha_{(n)}+\gamma)^{1/|\alpha_{(n)}|}=  \lim_{n\to\infty} Y_K(\alpha_{(n)}+\gamma)^{1/|\alpha_{(n)}+\gamma|}=T_{\calB}(K,\theta).
$$

If $\|z^{\tilde\beta}\|_K>0$, then as before, $(Y_{K,\beta}(\alpha_{(n)})C_{\beta}^{-1}\|z^{\tilde\beta}\|_K)^{1/|\alpha_{(n)}|}\to L$ as $n\to\infty$.   This proves (\ref{eqn:prop13>}).    Then using (\ref{eqn:14}), a similar calculation as before yields
$$
\liminf_{s\to\infty} \frac{1}{sh(s)}\sum_{|\alpha|=s}\log Y_{K,\beta}(\alpha) \ \geq \ \int_0^1 \log T_{\calB}(K,\theta(t))\, dt. 
$$ 
Together with (\ref{eqn:intlimsup}),  we obtain the convergence (\ref{eqn:intconv}).  
(Again if $\|z^{\beta}\|_K=0$, use $z^{\tilde\beta}+1$ in place of $z^{\tilde\beta}$.) 
\end{proof}

\section{Group invariance} \label{sec:invariance}

Let $V$ be defined as in (\ref{eqn:f}), (\ref{eqn:V}).  The projection to $w$ exhibits $V$ as a finite branched cover over $\CC^2$. 
 Let $D\subseteq\CC^2$ be the complement of the branch locus, let $\Omega:=\{(w,z)\in V\colon w\in D\}$, and let $D_z:=\{z\in\CC^2\colon (w,z)\in\Omega\}$.    

In what follows, we will assume $V$ to be irreducible, hence $\Omega$, $D$ and $D_z$ are connected (\cite{chirka:complex}, \S 5.3).  
Given paths $\alpha,\beta:[0,1]\to D$ with  $\alpha(1)=\beta(0)$, denote by $\alpha\beta$ the concatenated path, 
$$\alpha\beta(t)=\left\{ \begin{array}{rl}\alpha(2t) &\hbox{if } t\in[0,1/2]\\ \beta(2t-1) &\hbox{if }t\in[1/2,1]
\end{array}\right. . $$

  Let $\pi(D)$ denote the fundamental group; if $\alpha,\beta$ are loops based at the same point, then $[\alpha][\beta]=[\alpha\beta]$.  (Here $[\cdot]$  means to take the homotopy class.)

We describe the monodromy action of the fundamental group $\pi(D)$.  Let $(w,z)\in V$ and assume for the moment that $w\in D$.  Consider a loop $\gamma\colon[0,1]\to D$  with  $w=\gamma(0)=\gamma(1)$.  Let $(w,z)\in V$ be a preimage of $w$ under the projection.  Lift $\gamma$ to a path $\hat\gamma$ in $\Omega$ with $\hat\gamma(0)=(w,z)$ and set
\begin{equation}\label{eqn:actionV}
[\gamma]\cdot (w,z):=\hat\gamma(1).
\end{equation}
 Since the lifting is locally holomorphic, this formula is well-defined by the monodromy theorem in several complex variables (i.e., if $\eta$ is another such loop at $w$ with lift $\hat\eta$, then $[\gamma]=[\eta]$ implies $\hat\gamma(1)=\hat\eta(1)$).   % find citation?
  We get a group action  on $\Omega$. %$$\pi(D)\times\Omega\ni([\gamma],(w,z))\mapsto  [\gamma]\cdot(w,z)=:(w,\tilde z)\in\Omega.$$  

Write $(w,\tilde z):=[\gamma]\cdot(w,z)$.  Projecting to $z$ coordinates, put 
\begin{equation}\label{eqn:actionC}[\gamma]\cdot z:=\tilde z. \end{equation}
This map is locally holomorphic and bounded on $D_z$ for fixed $\gamma$.  The set $\CC^2\setminus D_z$ is locally a hypersurface.  By a theorem of Tsuji on removable singularities of holomorphic functions in several variables   
(see \cite{tsuji:removable}, Theorem 3),  $z\mapsto [\gamma]\cdot z$ extends holomorphically to all of $\CC^2$.  Hence the action of $\pi(D)$ given by (\ref{eqn:actionC}) extends to an action on  $\CC^2$.  Adjoining $w$-coordinates, the action of $\pi(D)$ on $\Omega$ given by (\ref{eqn:actionV}) extends to an action on $V$. 

\smallskip

Before continuing we recall some general notions of invariance.

\begin{definition}\label{def:p_G} \rm
Suppose $G$ is a finite group that acts on a complex manifold $X$,% $U\subset\CC^2\times\CC^2$, 
$$G\times X\ni (g,z)\mapsto g\cdot z\in X.$$  
A function $f\colon X\to\CC$ is \emph{invariant (under $G$)} if $f(g\cdot z)=f(z)$ for all $g\in G$.  

Given $p\colon X\to\CC$ define $p_G\colon X \to \CC$ by 
$$
p_G(z):= \frac{1}{|G|}\sum_{g\in G} p(g\cdot z).
$$
\end{definition}

If $z\mapsto g\cdot z$ is holomorphic for each $g\in G$ then $p_G$ is also holomorphic. Denote by $\O(X)$ the holomorphic functions on $X$.  Then $\O(X)\ni p\mapsto p_G\in\O(X)$ %is called the \emph{Reynolds operator} 
%(see e.g. \cite{coxlittleoshea:ideals}, 7\S 3).  Clearly the Reynolds operator 
is a linear projection from $\O(X)$ onto its subspace of invariant functions.% $p_G$ is invariant, and the 

We will also call a set $E\subset X$  \emph{invariant (under $G$)} if $g\cdot E\subseteq E$ for all $g\in G$.  

\begin{lemma}\label{lem:22}
Let $E\subset X$ be invariant and $p:X\to\CC$.  Then $\|p_G\|_E\leq \|p\|_E$.
\end{lemma}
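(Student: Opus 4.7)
The plan is to unwind the definition of $p_G$ and estimate pointwise, then take the supremum. This is essentially a one-line computation using two ingredients: the triangle inequality for the averaging sum, and the invariance of $E$ to keep each summand under control.

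First I would fix an arbitrary point $z \in E$. The invariance assumption $g\cdot E \subseteq E$ for all $g \in G$ tells me that $g\cdot z \in E$, and hence $|p(g\cdot z)| \leq \|p\|_E$ for every $g \in G$. Applying the triangle inequality to the definition of $p_G(z)$, I would then write
$$
|p_G(z)| \;=\; \left|\frac{1}{|G|}\sum_{g\in G} p(g\cdot z)\right| \;\leq\; \frac{1}{|G|}\sum_{g\in G} |p(g\cdot z)| \;\leq\; \frac{1}{|G|}\sum_{g\in G} \|p\|_E \;=\; \|p\|_E.
$$

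Finally, taking the supremum over $z \in E$ on the left yields $\|p_G\|_E \leq \|p\|_E$, as desired.

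There is no real obstacle here: the statement is a direct averaging estimate and uses nothing beyond $G$-invariance of $E$ together with the triangle inequality. The only subtlety worth flagging is that the argument does not require $p$ to be holomorphic or even continuous — it is purely a pointwise supremum-norm estimate — and that the invariance hypothesis is used in precisely the right place to ensure each translate $g\cdot z$ still lies in $E$, so that $\|p\|_E$ is a valid upper bound for every summand.
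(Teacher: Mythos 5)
Your proof is correct and is essentially identical to the paper's argument: fix a point of $E$, use invariance to keep each translate in $E$, apply the triangle inequality to the averaging sum, and take the supremum. The only minor difference is that the paper explicitly dispenses first with the trivial case $\|p\|_E = \infty$, which your argument implicitly covers anyway.
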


\begin{proof}
Assume $\|p\|_E<\infty$ otherwise the inequality holds trivially.  Let $x\in E$.  If $g\in G$ then $g\cdot x\in E$ by invariance, so $|p(g\cdot x)|\leq\|p\|_E$ and 
$$
|p_G(x)|\leq \frac{1}{|G|}\sum_{g\in G} |p(g\cdot x)| \leq \frac{1}{|G|}\sum_{g\in G} \|p\|_E  = \|p\|_E.
$$
Now take the sup over all $x\in E$.
\end{proof}

We now return to the $\pi(D)$-actions on $\CC^2$ and $V$.

\begin{lemma}\label{lem:43}
Let $f\colon V\to \CC$ be a function that is locally the restriction of a holomorphic function in a neighborhood (in $\CC^2\times\CC^2$) of every point of $V$.  Fix $a\in D$ and let $b_{(1)},\ldots,b_{(d)}$ be the corresponding points in $D_z$, i.e., $V\cap\{(w,z)\colon w=a\}=\{(a,b_{(1)}),\ldots,(a,b_{(d)})\}$.  Then for any $j,k\in\{1,\ldots,d\}$, 
$$
f_{\pi(D)}(a,b_{(j)})=f_{\pi(D)}(a,b_{(k)}).
$$
\end{lemma}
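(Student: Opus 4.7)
The plan is to reduce the $\pi(D)$-average to an ordinary average over the fiber above $a$ via orbit-stabilizer. I would begin by noting that whenever $\gamma$ is a loop in $D$ based at $a$, the endpoint $[\gamma]\cdot(a,b_{(j)})$ of its lift starting at $(a,b_{(j)})$ lies again in the fiber $\{(a,b_{(1)}),\ldots,(a,b_{(d)})\}$; therefore $\pi(D)$ acts on this finite set by permutations, factoring through a finite subgroup $G$ of the symmetric group on the fiber (the monodromy group at $a$). The expression $f_{\pi(D)}$ then makes literal sense as $f_G$ in Definition \ref{def:p_G}.

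The next step is to verify that $G$ acts transitively on the fiber. Since $V$ is assumed irreducible, $\Omega$ is connected by the reference cited in Section \ref{sec:invariance}, so any two fiber points $(a,b_{(j)})$, $(a,b_{(k)})$ can be joined by a path $\sigma$ in $\Omega$. Projecting $\sigma$ to its $w$-coordinate produces a loop $\gamma$ in $D$ based at $a$, and because the projection $\Omega\to D$ is a local biholomorphism, $\sigma$ is exactly the unique lift of $\gamma$ starting at $(a,b_{(j)})$. Hence $[\gamma]\cdot(a,b_{(j)})=\sigma(1)=(a,b_{(k)})$.

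With transitivity in hand, the orbit of $(a,b_{(j)})$ is the whole fiber, so the stabilizer $G_j$ has order $|G|/d$, and for each $k$ the set $\{g\in G : g\cdot(a,b_{(j)})=(a,b_{(k)})\}$ is a coset of $G_j$ of cardinality $|G|/d$. Grouping the sum defining $f_G$ according to the value of $g\cdot(a,b_{(j)})$ yields
$$
f_G(a,b_{(j)})=\frac{1}{|G|}\sum_{g\in G}f(g\cdot(a,b_{(j)}))=\frac{1}{d}\sum_{k=1}^d f(a,b_{(k)}),
$$
which is manifestly symmetric in $j$, proving the lemma.

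The only real obstacle is the transitivity step; it requires both the irreducibility hypothesis (to conclude connectedness of $\Omega$) and the local biholomorphy of the projection $\Omega\to D$ (to recognize a path in $\Omega$ as the canonical lift of its $w$-projection). The local holomorphicity hypothesis on $f$ plays no combinatorial role in this averaging identity; its function is to ensure $f_{\pi(D)}$ is well-defined as a holomorphic object on $V$, so that the statement of the lemma is meaningful.
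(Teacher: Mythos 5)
Your proof is correct and rests on the same key fact as the paper's: transitivity of the monodromy action on the fiber, obtained from connectedness of $\Omega$ (hence irreducibility of $V$) by projecting a path in $\Omega$ between $(a,b_{(j)})$ and $(a,b_{(k)})$ to a loop in $D$. Where the two diverge is in the final step. The paper, having produced the loop $\eta$ with $[\eta]\cdot b_{(j)}=b_{(k)}$, simply reindexes the defining sum via $[\gamma]\mapsto[\gamma\eta]$ (a bijection of the group) and concludes immediately; you instead invoke orbit--stabilizer to reduce the group average to the uniform average $\tfrac{1}{d}\sum_{k}f(a,b_{(k)})$ over the fiber. Your route gives strictly more information (the explicit closed form of $f_{\pi(D)}$, which the paper never states), at the cost of one extra conceptual step. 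You also make explicit something the paper leaves implicit: since Definition~\ref{def:p_G} requires a finite group, $f_{\pi(D)}$ should be read as $f_G$ for the finite monodromy quotient $G$ of $\pi(D)$; your first paragraph addresses this cleanly. Your closing remark that the holomorphicity hypothesis on $f$ plays no role in this particular identity is correct -- it is used in Proposition~\ref{prop:24}, not here.
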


\begin{proof}
Fix $j,k\in \{1,\ldots,d\}$.  There is a path $\hat\eta$ from $(a,b_{(j)})$ to $(a,b_{(k)})$ in $\Omega$, which projects to a loop $\eta$ in $D$  based at $a$.   Hence $[\eta]\cdot b_{(j)}=b_{(k)}$ and 
$$\begin{aligned}
f_{\pi(D)}(a,b_{(k)}) = \frac{1}{|\pi(D)|}\sum_{[\gamma]\in\pi(D)} f(a,[\gamma]\cdot b_{(k)}) &= \frac{1}{|\pi(D)|}\sum_{[\gamma]\in\pi(D)} f(a,[\gamma]\cdot[\eta]\cdot b_{(j)})\\
&= \frac{1}{|\pi(D)|}\sum_{[\gamma\eta]\in\pi(D)} f(a,[\gamma \eta]\cdot b_{(j)})  \\  &= f_{\pi(D)}(a,b_{(j)}).  
\end{aligned}$$
\end{proof}

%We are interested in invariant sets under this action, i.e., sets for which $[\gamma]K\subseteq K$ for all $[\gamma]$. 

\begin{proposition}\label{prop:24}
Let $p\in\CC[w,z]_V$.   Then $p_{\pi(D)}\in\CC[w]$ and $\deg(p_{\pi(D)})\leq \deg(p)$.  
\end{proposition}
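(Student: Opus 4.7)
The plan is to prove the two claims in sequence: that $p_{\pi(D)}$ descends to a function $q\in\CC[w]$, and that $\deg q \le \deg p$.

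For the descent, Lemma \ref{lem:43} directly gives $p_{\pi(D)}(w,b_{(j)})=p_{\pi(D)}(w,b_{(k)})$ for any two preimages of $w\in D$, so $p_{\pi(D)}$ is constant on each fiber of $\pi_w\colon \Omega\to D$. Since $p_{\pi(D)}$ is continuous on all of $V$ (the $\pi(D)$-action extends continuously, and $p$ is polynomial), this fiber-constancy extends across the branch locus, so $p_{\pi(D)}$ descends to a well-defined function $q$ on $\pi_w(V)\subseteq\CC^2$.

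To see $q$ is polynomial, I would first argue that each $\sigma_\gamma\colon\CC^2\to\CC^2$ is itself a polynomial map. Indeed $\sigma_\gamma$ is entire (by Tsuji's theorem, as used just above) and satisfies $f\circ\sigma_\gamma = f$, so $\sigma_\gamma(z)$ lies in the finite algebraic set $f^{-1}(f(z))$; each coordinate of $\sigma_\gamma$ is thus algebraic over $\CC(z)$, and an entire algebraic function on $\CC^2$ has polynomial growth and so is a polynomial. Hence $p_{\pi(D)} = |\pi(D)|^{-1}\sum_\gamma p(w,\sigma_\gamma(z))$ is a polynomial function on $V$, so under the isomorphism $V\cong\CC^2$ (via $z$) it corresponds to an element of $\CC[z]^{\pi(D)}$. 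By the Galois-theoretic identification, together with the fact that $\CC[w]$ is integrally closed in $\CC(w)=\CC(z)^{\pi(D)}$, one has $\CC[z]^{\pi(D)}=\CC[f(z)]$, so $p_{\pi(D)}(w,z)=q(w)$ on $V$ for some $q\in\CC[w]$.

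The degree bound is where I expect the main obstacle. The natural approach is to decompose $p = \sum_\alpha w^\alpha h_\alpha(z)$ in its normal form with $\max_\alpha(|\alpha|+\deg h_\alpha)=\deg p$; applying the Galois argument to each $(h_\alpha)_{\pi(D)}$ gives $(h_\alpha)_{\pi(D)} = Q_\alpha\circ f$ for some $Q_\alpha\in\CC[w]$, so $q = \sum_\alpha w^\alpha Q_\alpha(w)$, and it suffices to show $\deg Q_\alpha \le \deg h_\alpha$. A top-degree analysis of the identity $f\circ\sigma_\gamma = f$, using that the leading homogeneous parts $\hat f_1,\hat f_2$ have only $(0,0)$ as a common zero (the regularity condition from Section \ref{sec:pullback}), forces $\deg\sigma_\gamma \le 1$, hence $\deg(h_\alpha)_{\pi(D)}\le\deg h_\alpha$; combined with $\deg(Q_\alpha\circ f)=d_1\deg Q_\alpha$ (no top-degree cancellation, again by regularity), this yields $\deg Q_\alpha\le \deg h_\alpha/d_1\le \deg h_\alpha$, whence $\deg q\le \max_\alpha(|\alpha|+\deg Q_\alpha)\le \deg p$.
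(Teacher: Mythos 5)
Your proof takes a genuinely different route from the paper's.  After the shared first step (Lemma~\ref{lem:43} gives fiber-constancy of $p_{\pi(D)}$), the paper proceeds analytically: the resulting function $\varphi(w)$ is locally bounded, hence extends to an entire function by Tsuji's theorem, and the growth estimate $|z|=o(|w|)$ on $V$ gives $|\varphi|=O(|w|^{\deg p})$, from which Cauchy estimates deliver polynomiality and the degree bound in one stroke.  You instead replace this with an algebraic package: you argue that each $\sigma_\gamma$ is a polynomial automorphism of $\CC^2$, deduce $p_{\pi(D)}\in\CC[w]$ via the Galois-theoretic identification $\CC[z]^{\pi(D)}=\CC[f(z)]$ and integral closedness of $\CC[w]$, and then obtain the degree bound by forcing $\deg\sigma_\gamma\leq 1$ from a top-degree analysis of $f\circ\sigma_\gamma=f$.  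The algebraic route is an interesting alternative and gives extra structural information (the $\sigma_\gamma$ are affine), but it is heavier than the paper's argument and requires more justification: the Galois identification needs $\CC(z)/\CC(w)$ to be Galois and $\CC[z]$ to be integral over $\CC[w]$, neither of which you establish (they do follow once the $\sigma_\gamma$ are taken to be honest automorphisms acting transitively on generic fibers, but this should be said).

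There is, however, a genuine gap in the degree bound.  You invoke the regularity of $f$ (``the regularity condition from Section~\ref{sec:pullback}'') to conclude $\deg\sigma_\gamma\leq 1$, but Proposition~\ref{prop:24} sits in Section~\ref{sec:invariance}, whose standing hypotheses on $V$ come from Section~\ref{sec:nform} and do not include regularity; that hypothesis is only introduced in Section~\ref{sec:pullback}.  As written you have proved the degree estimate only under an additional assumption, so you have not established the proposition as stated.  The paper's own proof uses the estimate $|z|=o(|w|)$ for $(w,z)\in V$ instead, which is the ingredient you are missing.  A secondary slip: the asserted identity $\deg(Q_\alpha\circ f)=d_1\deg Q_\alpha$ is false when $d_1\neq d_2$; what one actually has (assuming no leading-term cancellation, again using coprimality of $\hat f_1,\hat f_2$) is $\deg(Q_\alpha\circ f)\geq d_2\deg Q_\alpha$, which is still enough since $d_2\geq 1$, but the argument should be stated with the smaller degree.
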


\begin{proof}
By Lemma \ref{lem:43}, the function $(w,z)\mapsto p_{\pi(D)}(w,z)$ on $\Omega$ is independent of $z$, so $p_{\pi(D)}(w,z)=\varphi(w)$ for some $\varphi\in\O(D)$.  Since $\varphi$ is locally bounded, it extends to an entire function on 
$\CC^2$ by Tsuji's removable singularity theorem \cite{tsuji:removable}.  Since $|z|=o(|w|)$ as $|w|\to\infty$ with $(w,z)\in V$, $\varphi$ grows at most polynomially in $w$; in fact, $|\varphi|=O(|w|^{|\deg p|})$ as $|w|\to\infty$.  This follows from applying $w^{\alpha}z^{\beta}=o(|w|^{|\alpha+\beta|})$ (valid for any monomial considered locally as a function in $w$) to the monomials of $p_{\pi(D)}$.    
By standard complex analysis arguments in $\CC^2$, the coefficients of the power series in $w$ for $\varphi$ must be zero for all powers greater than $\deg(p)$.  Hence it is a polynomial, i.e., an element of $\CC[w]\subseteq\CC[w,z]_V$. By uniqueness of normal forms, this polynomial (of degree at most $\deg(p)$) is precisely $p_{\pi(D)}$. \end{proof}

%\begin{corollary}\label{cor:45}
%We have $\bw_j:=(\bv_j)_{\pi(D)}\in\CC[w]$ for all $j\in\{1,\ldots,d\}$ and $\deg(\bw_j)\leq\deg(\bv_j)$. \qed
%\end{corollary}

We next look at invariant sets in $V$ and $\CC^2$ under the action of $\pi(D)$.  Given a set $K\subset V$, denote by $K_w$ and $K_z$ the projections to $w$ and $z$ coordinates respectively.  (In particular, $\Omega_w=D$.)

\begin{proposition} \label{prop:25} Let $K\subset V$. The following are equivalent.
\begin{enumerate}[(1)]
\item $K$ is invariant under the $\pi(D)$-action given by (\ref{eqn:actionV}).
\item $K_z$ is invariant under the  $\pi(D)$-action given by (\ref{eqn:actionC}).
\item $K_z=f^{-1}(K_w)$.
\end{enumerate}
\end{proposition}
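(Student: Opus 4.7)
The plan is to establish the chain $(1)\Leftrightarrow(2)\Leftrightarrow(3)$, the first equivalence coming essentially from a definition-chase on the graph structure of $V$ and the second requiring a structural fact about how $\pi(D)$-orbits sit inside $f$-fibers.

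For $(1)\Leftrightarrow(2)$, I would exploit that $V$ is a graph: the projection $V\ni(w,z)\mapsto z\in\CC^2$ is a bijection with inverse $z\mapsto(f(z),z)$, so it identifies subsets $K\subset V$ with subsets $K_z\subset\CC^2$. By the very definitions (\ref{eqn:actionV}) and (\ref{eqn:actionC}), this bijection intertwines the two actions --- the action on $\CC^2$ was set up as the $z$-component of the action on $V$. Hence invariance transfers back and forth, giving $(1)\Leftrightarrow(2)$ with essentially no work.

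For $(2)\Leftrightarrow(3)$, the pivotal observation I would establish first is that the extended $\pi(D)$-action on $\CC^2$ preserves the fibers of $f$, i.e., $f\circ[\gamma]=f$. On $D_z$ this is immediate, since the lifting construction (\ref{eqn:actionV}) keeps the $w$-coordinate fixed; by continuity (equivalently, by the identity principle applied to the Tsuji extension used in Section \ref{sec:invariance}) this identity extends to all of $\CC^2$. With this in hand, $(3)\Rightarrow(2)$ is one line: for $z\in K_z=f^{-1}(K_w)$ and $[\gamma]\in\pi(D)$, we have $f([\gamma]\cdot z)=f(z)\in K_w$, so $[\gamma]\cdot z\in f^{-1}(K_w)=K_z$.

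The implication $(2)\Rightarrow(3)$ is the substantive step. The inclusion $K_z\subseteq f^{-1}(K_w)$ is automatic; for the reverse I need that $\pi(D)$-orbits in $\CC^2$ coincide with $f$-fibers, so that any $z'\in f^{-1}(K_w)$ can be reached from some $z\in K_z$ with $f(z)=f(z')$ via a single $[\gamma]$, at which point invariance of $K_z$ delivers $z'\in K_z$. Over $D$, orbits equal fibers because the irreducibility of $V$ forces $\Omega$ to be connected, whence the monodromy acts transitively on each generic fiber. Over the branch locus the transitivity does not come directly from the lifting construction, and this is where I expect the main obstacle to lie: I would handle it by the observation that the image of $\pi(D)$ in $\mathrm{Aut}(\CC^2)$ is \emph{finite} (since two elements that agree on a single generic fiber agree on $D_z$ by uniqueness of lifts, hence on $\CC^2$ by continuity), and then, given $z,z'$ in a branch fiber $f^{-1}(w)$, approximate $w$ by regular values $w_n\in D$ and $z,z'$ by $z_n,z_n'\in f^{-1}(w_n)$, pick $[\gamma_n]$ with $[\gamma_n]\cdot z_n=z_n'$, extract a constant subsequence $[\gamma_n]=[\gamma]$, and pass to the limit to get $[\gamma]\cdot z=z'$. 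This finite-group/continuity extension of monodromy transitivity is the only genuinely delicate point in the proof.
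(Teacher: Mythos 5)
Your proposal is correct, and it reaches the same equivalences by a somewhat different route from the paper. The paper establishes all three equivalences directly only for $K\subseteq\Omega$ (so that $K_w\subseteq D$ and loops through points of $K_w$ can be lifted explicitly), then extends to general $K$ in two approximation stages: first to sets whose points are all limits of $K\cap\Omega$, and then to arbitrary $K$ by writing $K$ as the intersection of a nested family of tubes $K_j$ and passing to the limit. Your approach instead isolates two structural facts about the extended action and makes them carry the whole proof: that $f\circ[\gamma]=f$ on all of $\CC^2$, and that the $\pi(D)$-orbits coincide with the full $f$-fibers, including branch fibers. With these, $(3)\Rightarrow(2)$ is a one-liner and $(2)\Rightarrow(3)$ follows from fiber-transitivity with no need to decompose $K$ into regular and singular parts. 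The point you correctly flag as delicate --- orbit-transitivity over the branch locus --- you handle by finiteness of the image of $\pi(D)$ in $\mathrm{Aut}(\CC^2)$ plus approximation of branch fibers by nearby regular fibers; your justification of finiteness (two elements agreeing on a single fiber in $\Omega$ determine the same local lifts near that fiber, hence agree on the connected set $D_z$, hence on $\CC^2$ by continuity of the Tsuji extension) is sound, and indeed the paper implicitly uses the same finiteness when it writes $\frac{1}{|\pi(D)|}\sum_{[\gamma]\in\pi(D)}$ in Lemma \ref{lem:43}. In short, the paper approximates the \emph{set}, while you approximate the \emph{fiber} once and for all; both are valid, and your formulation has the virtue of making the orbit-fiber identity explicit rather than leaving it buried inside a tube-neighborhood limit.
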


\begin{proof} We first prove the proposition when $K\subseteq\Omega$. (Hence $K_w\subseteq D$ and elements of $\pi(D)$ may be obtained from  loops based at points of $K_w$.) 
\begin{description}
\item[\rm (1) $\Leftrightarrow$ (2)] By adjoining and dropping $w$-coordinates, we get the equivalence of these two statements.
\item[\rm (2) $\Rightarrow$ (3)] We have $K_z\subseteq f^{-1}(K_w)$ by definition, so we need to show that invariance implies $f^{-1}(K_w)\subseteq K_z$.

Let $b\in f^{-1}(K_w)$.  Then $f(b)\in K_w$.  Hence there exists $a\in\CC^2$ such that $(f(b),a)\in K$.  Moreover, $f(a)=f(b)$ since $K\subset V$.  Let $\hat\eta$ be a path from $(f(b),a)$ to $(f(b),b)$ in $V$. This projects to a loop $\eta$ based at $f(b)$.  Now $a\in K_z$ by definition, and therefore $[\eta]\cdot a=b\in K_z$ by invariance. So $f^{-1}(K_w)\subseteq K_z$.

\item[\rm (3) $\Rightarrow$ (2)] Suppose $K_z=f^{-1}(K_w)$.  Let $b\in K_z$ and $[\eta]\in\pi(D)$, where we can take $\eta$ to be a loop based at $f(b)$.  Then $[\eta]\cdot b=a$ where $f(a)=b$.  So $a\in f^{-1}(K_w)$, i.e., $[\eta]\cdot b\in K_z$.  This says that $K_z$ is invariant.  
\end{description}

Now suppose each point of $K$ is a limit point of the set $K\cap\Omega$.     By continuity of the extended action,  statement 1 holds for $K\cap\Omega$ if and only if it holds for $K$; similarly, statement 2 holds for $K_z\cap\Omega_z$ if and only if it holds for $K_z$.  Also, $K_z\cap\Omega_z=f^{-1}(K_w\cap D)$ by continuity of $f$.  Hence the proposition holds for $K$ because it holds for $K\cap\Omega$ by the first part of the proof.

For general $K$, write $K=\bigcap_{j=1}^{\infty} K_j$ where the sets are decreasing in $j$:
$$K_j:=\{(w,z)\in V\colon |w-\tilde w|\leq 1/j \hbox{ whenever } (\tilde w,\tilde z)\in K  \}.$$  Each $K_j$ satisfies the conditions of the previous paragraph, so the proposition holds for these sets.  Moreover, using continuity of the $\pi(D)$-action and continuity of $f$, it can be shown by standard analysis that 
\begin{itemize}
\item $K_z$ is invariant if and only if $(K_j)_z$ is invariant for sufficiently large $j$; and
\item $K_z=f^{-1}(K_w)$ if and only if $(K_j)_z=f^{-1}((K_j)_w)$ for sufficiently large $j$. 
\end{itemize}
Hence the proposition holds for $K$ by taking a limit as $j\to\infty$.
\end{proof}

%\begin{remark}\rm
%To apply the proposition we can start with a set$E\subset\CC^2$ be compact.  Then $K:=\{(w,F^{-1}(w))\colon w\in E \}$ is a compact subset of $V$ that satisfies the hypothesis  
%\end{remark}

%In what follows, fix an invariant compact set $K\subset V$.  The set $K_w\subset\CC^2$ is compact, and 
%Consider now the classical Chebyshev transform of Zaharjuta \cite{zaharjuta:transfinite} in $w$-coordinates.  Let
%$$
%\calM(\alpha):=\{p\in\CC[w]\colon p(w)=w^{\alpha} + \lot \}
%$$
%where lower order terms are taken with respect to the grevlex ordering of monomials in $\CC[w]$. 

%Given a compact set $E\subset\CC^2$, define $$Z_E(\alpha):=\inf\{\|p\|_E\colon p\in\calM(\alpha)\}.$$  This is a submultiplicative function ($\calF=\emptyset$ in Definition \ref{def:subm}).  As in Proposition \ref{prop:34}, we have the following.

%\begin{proposition} The limit
%$$
%\tau(E,\theta):=\lim_{\substack{|\alpha|\to\infty\\ \alpha/|\alpha|\to\theta}} Z_E(\alpha)^{1/|\alpha|}
%$$
%exists for each $\theta\in I^{\circ}$ and $\theta\mapsto \tau(E,\theta)$ defines a logarithmically convex function on $I^{\circ}$.  Moreover, we have the convergence
%$$
%\frac{1}{h(s)}\sum_{|\alpha|=s}\log Z_E(\alpha)^{1/|\alpha|} \ \to \ \int_0^1\log \tau(E,\theta(t))dt \quad\hbox{as }s\to\infty
%$$
%where $\theta(t)=(t,1-t)$.  
%\end{proposition}

Our main theorem of this section is the following.

\begin{theorem} \label{thm:invariance}
Let $K\subset V$ be a compact $\pi(D)$-invariant set.  Then
$$
T_{\calB}(K,\theta) = T_{\CC[w]}(K,\theta) \hbox{ for each } \theta\in \Sigma^{\circ}.
$$
\end{theorem}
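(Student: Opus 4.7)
The plan is to combine the easy inequality $T_{\CC[w]}(K,\theta)\geq T_{\calB}(K,\theta)$ from Lemma \ref{lem:13}(1) with a symmetrization argument that turns a near-optimal polynomial for $Y_K(\alpha)$ into a polynomial in $\CC[w]$ of the same leading monomial. It suffices to prove the monomial-level bound $Z_K(\alpha)\leq Y_K(\alpha)$ for every $\alpha\in\NN_0^2$; taking $|\alpha|$-th roots along sequences with $\alpha/|\alpha|\to\theta$ then transfers the inequality to Chebyshev transforms.

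To prove $Z_K(\alpha)\leq Y_K(\alpha)$, fix $\alpha$ and $\epsilon>0$ and choose $p\in\CC[w,z]_V$ in normal form $p=w^{\alpha}+\lot$ with $\|p\|_K\leq Y_K(\alpha)+\epsilon$. Apply the averaging operator of Definition \ref{def:p_G} to form $q:=p_{\pi(D)}$. Using $\pi(D)$-invariance of $K$, Lemma \ref{lem:22} gives $\|q\|_K\leq\|p\|_K$; Proposition \ref{prop:24} gives $q\in\CC[w]$ with $\deg q\leq|\alpha|$. If one can show that $q$ still has $w^{\alpha}$ as its leading monomial in the grevlex ordering restricted to $\CC[w]$, then $q$ is admissible in the infimum defining $Z_K(\alpha)$, yielding $Z_K(\alpha)\leq\|q\|_K\leq Y_K(\alpha)+\epsilon$, and the result follows on letting $\epsilon\to 0$.

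The leading-monomial check is the delicate part, and the hard step of the argument. By (\ref{eqn:actionV}) the monodromy action on $V$ fixes $w$-coordinates, so every pure-$w$ monomial is invariant under averaging: $(w^{\gamma})_{\pi(D)}=w^{\gamma}$. The lex rule $w_1\prec w_2\prec z_1\prec z_2$ from Section \ref{sec:nform} places any $z$-bearing monomial strictly above every pure-$w$ monomial of the same total degree, so among the $\lot$ monomials of $p$, those at total degree $|\alpha|$ must be pure $w$-monomials $w^{\gamma}$ with $\gamma\prec\alpha$, and these are preserved by averaging. Any $\lot$ monomial $w^{\beta}z^{\delta}$ in $p$ with $\delta\neq 0$ satisfies $|\beta|+|\delta|<|\alpha|$; applying Proposition \ref{prop:24} to $z^{\delta}$, the average $(w^{\beta}z^{\delta})_{\pi(D)}=w^{\beta}(z^{\delta})_{\pi(D)}$ is a $\CC[w]$-polynomial of total degree at most $|\beta|+|\delta|<|\alpha|$. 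Hence $q=w^{\alpha}+(\text{terms strictly }\prec w^{\alpha}\text{ in }\CC[w])$, completing the argument. The main subtlety, then, rests on the combination of two features of the setup: the triviality of the $\pi(D)$-action on $w$-coordinates, and the placement of pure-$w$ monomials at the bottom of each total-degree stratum in the grevlex ordering.
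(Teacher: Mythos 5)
Your proof is correct and follows the same route as the paper: reduce to showing $Z_K(\alpha)\le Y_K(\alpha)$, and obtain this by averaging a near-extremal $p\in\CC[w,z]_V$ to $p_{\pi(D)}\in\CC[w]$ via Lemma \ref{lem:22} and Proposition \ref{prop:24}, then combine with Lemma \ref{lem:13}. Your careful verification that $p_{\pi(D)}$ still has leading monomial $w^\alpha$ makes explicit a step the paper leaves implicit (it simply writes $p_{\pi(D)}(w)=w^\alpha+\lot$), so this is a useful clarification rather than a different argument.
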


\begin{proof}
By Lemma \ref{lem:13} it suffices to show that 
 $T_{\CC[w]}(K,\theta)\leq T_{\calB}(K,\theta)$ for any $\theta\in \Sigma^{\circ}$.  This will follow immediately from showing that $Z_K(\alpha)\leq Y_K(\alpha)$ for any $\alpha\in\NN_0^2$.  

Fix $\alpha$ and let $p(w)=w^{\alpha}+\lot$ be a polynomial in $\CC[w,z]_V$ with the property that $\|p\|_K = Y_K(\alpha)$.  Let $q(w):=p(w)-w^{\alpha}$ denote the polynomial given by the lower order terms.  By Proposition \ref{prop:24},  $p_{\pi(D)},q_{\pi(D)}\in\CC[w]$ and 
$$
p_{\pi(D)}(w) = (w^{\alpha})_{\pi(D)} + q_{\pi(D)}(w) = w^{\alpha}+\lot.
$$
By the definition of $Z_K(\alpha)$ together with Lemma \ref{lem:22},
$$
Y_K(\alpha)=\|p\|_K\geq\|p_{\pi(D)}\|_K\geq Z_K(\alpha)
$$
which was to be shown.
\end{proof}

We will return to $\pi(D)$-invariant sets at the end of the next section (see equation (\ref{eqn:Lgraph})) and  throughout Section \ref{sec:pullback}.

\section{Transfinite diameter}\label{sec:transfinite}

We recall the construction of transfinite diameter.  Let $\calB$ denote a countable collection of monomials that span a  subspace $A\subset\CC[w,z]$.   (Later, $A$ will be one of $\CC[w,z]_V$, $\CC[w]$, or $\CC[z]$.)  % that give coordinates on 
%$V$, $\CC^2$ or $\CC^2$.) 
 We will assume that $\calB=\{\be_j\}_{j\in\NN}$ lists the elements of $\calB$ with respect to some ordering  $\prec$,  i.e., $\be_j\prec\be_k$ whenever $j\leq k$.  

Let $X\subset\CC^2\times\CC^2$ be an algebraic variety.   We will suppose that there is a filtration $A=\bigcup_{k=0}^{\infty}A_k$, where $A_0\subset A_1\subset\cdots$ is an increasing sequence of finite-dimensional linear subspaces with the property that $A_kA_l\subseteq A_{k+l}$.  This means that 
given 
$p_1\in A_k$ and  $p_2\in A_l$,  there exists $p_{12}\in A_{k+l}$ such that $$p_1(w,z)p_2(w,z)=p_{12}(w,z) \hbox{ for all } (w,z)\in X.$$   Denote the filtration (equivalently, the sequence $\{A_k\}_{k=0}^{\infty}$) by $\bf A$.  We require $\bf A$  to be compatible with the ordering in the sense that if $p\in A_k$ and $q\in A\setminus A_k$ then $p\prec q$.   For each $k$, let $\calB_k=\calB\cap A_k$.% for each $k$.

Given a finite set of points $\{\zeta_j\}_{j=1}^{n}\subset X$, define
\begin{equation}\label{eqn:vandef}
\Van_{\calB}(\zeta_1,\ldots,\zeta_n) := \det\left[ \be_j(\zeta_k) \right]_{j,k=1}^n
\end{equation}
and given a compact set $K\subset X$ and $n\in\NN$, define
\begin{equation}\label{eqn:vanKdef}
\Van_{\calB,n}(K):=\max\{ |\Van_{\calB}(\zeta_1,\ldots,\zeta_{n})| \colon \zeta_j\in K\hbox{ for all } j\}.
\end{equation}

  Let $m_k$ denote the number of elements in $\calB_k$ (equivalently, the dimension of $A_k$), and let $l_k:=\sum_{\nu=1}^k \nu(m_{\nu}-m_{\nu-1})$.   
The \emph{transfinite diameter of $K$} is defined as
$$
d_{\bA,\calB}(K) := \limsup_{n\to\infty} \left(\Van_{\calB,m_n}(K)\right)^{1/l_n}.
$$

\begin{example}\rm Let $K\subset\CC^2$, $\bA =\{\CC[z]_{\leq k}\}_{k=0}^{\infty}$ and $\calB=\{z^{\alpha}\colon \alpha\in\NN_0^2 \}$.   Then $d_{\bA,\calB}(K)$ is the Fekete-Leja transfinite diameter in $\CC^2$, which we will denote simply by $d(K)$.
\end{example}

\begin{example} \label{ex:KinV}  \rm 
We will consider the following cases of transfinite diameter on $$V=\{(w,z)\in\CC^2\times\CC^2\colon w=f(z)\},$$
where $f:\CC^2\to\CC^2$ is a polynomial map of degree $d\geq 1$ in each component as in Example (\ref{ex:13}).   Let $K\subset V$ be a compact set.
\begin{enumerate}

\item Let $\bA=\{\CC[z]_{\leq k}\}$  and $\calB=\{z^{\alpha}\}$.  We obtain the Fekete-Leja transfinite diameter of the projection to $z$, $d_{\bA,\calB}(K) = d(K_z).$  Elements of $\CC[z]$ may be identified with elements of $\CC[V]$ by elimination: the map $f^*$ defined by 
\begin{equation}\label{eqn:iso}\CC[w,z]\ni p(w,z)\longmapsto p(f(z),z)=:(f^*p)(z)\in\CC[z]\end{equation} descends to an isomorphism $f^*\colon\CC[V]\to\CC[z]$. 

\item Consider $\CC[w,z]_V$; by uniqueness of normal forms, an element of $\CC[V]$ is represented by a unique $q\in\CC[w,z]_V$.  Hence the restriction of the map (\ref{eqn:iso}) to  $\CC[w,z]_V$ is an isomorphism, which we use to define the the elements of $\bA=\{A_k\}$:  
%$$w^{\alpha}z^{\beta}\prec w^{\tilde\alpha}z^{\tilde\beta} \hbox{   whenever  }\lt(f(z)^{\alpha}z^{\beta})\prec\lt(f(z)^{\tilde\alpha}z^{\tilde\beta}),$$ and 
    $$ A_k:=\{q\in\CC[w,z]_V\colon  f^*q \in\CC[z]_{\leq dk}\}.$$ 
Let $\calB$ be the monomial basis of $\CC[w,z]_V$; we order it as in Example \ref{ex:21}.  (It is straightforward to show that the ordering is compatible with $\bA$.)   
The transfinite diameter $d_{\bA,\calB}(K)$ obtained here is different to the one above.  

\item  Let $K\subset V$, let $\bA=\{\CC[w]_{\leq k}\}$, and let $\calB=\{w^{\alpha}\}$.  Note that $\CC[w]_{\leq k}=A_k\cap\CC[w]$ with $A_k$ as above.  Similar to case 1,
$
d_{\bA,\calB}(K) = d(K_w).
$  %If $\deg(f)>1$ then $\CC[w]$ does not give coordinates on $V$ since it does not separate points with the same $z$-component. 
\end{enumerate}
The relation among all these transfinite diameters will be important later.
\end{example}

\begin{example}\rm 
We give a concrete computation of $\bA$ in case 2 of the above example. Consider $w=f(z)$ given by
$$
w_1=z_1^2+z_2,\quad w_2=z_2^2+1.
$$
The monomials in $\CC[w,z]_V$ are $$\left\{w^{\alpha}z_1^mz_2^n\colon \alpha\in\NN_0^2;\  m,n\in\{0,1\} \right\}.$$
Then $A_1$ is spanned by the monomials $\calB_1=\{1,z_1,z_2,z_1z_2,w_1,w_2\}$ which correspond to 
$\{1,z_1,z_2,z_1z_2, z_1^2+z_2,z_2^2+1\}$ in $\CC[z]_{\leq 2}$.  In general, $A_k$ is spanned by the monomials 
$$
\calB_k=\{ w^{\alpha}, w^{\beta}z_1, w^{\gamma}z_2,w^{\delta}z_1z_2 \hbox{ where } |\alpha|\leq k \hbox{ and }
|\beta|,|\gamma|,|\delta|\leq k-1 \}.
$$                                                                                                                                                                                                                                                                                                                                                                                                                                                                                                                                                                                                                                                                                                                                                                                                                                                                                                                                                                                                                                                                                                                                                                                                                                                                                                                                                                                                                                                                                                                                                                                                                                                                                                                                                                                                                                                                                                                                                                                                                                                                                                                                                                                                                                                                                                                                                                                                                                                                                                                                                                                                                                                                                                                                                                                                                                                                                                                                                                                                                                                                                                                                                                                                                                                                                                                                                                                                                                                                                                                                                                                                                                                                                                                                                                                                                                                                                                                                                                                                                                                                                                                                                                                                                                                              This can be verified by a straightforward induction.
\end{example}
    
Denote by $d^{(1)}(K)$, $d^{(2)}(K)$ and $d^{(3)}(K)$ the transfinite diameters defined in Example \ref{ex:KinV}.

%\begin{theorem}\label{prop:d2}
%Suppose $L$ is invaria  Then  $d^{(2)}(K) = d(K_w)$.
%\end{theorem}

%The theorem is based on the following transfinite diameter formulas.

\begin{theorem} \label{thm:zaharj}
Let $K\subset V$ be compact, and let $\theta(t)=(t,1-t)$.  We have 
\begin{eqnarray}
d^{(1)}(K)   &=& \int_0^1 \log T_{\CC[z]}(K,\theta(t))\,dt  \ =  \ d(K_z),    \label{eqn:d1formula}    \\ 
d^{(2)}(K) &=& \int_0^1 \log T_{\calB}(K,\theta(t))\,dt,   \label{eqn:d2formula} \\
d^{(3)}(K) &=& \int_0^1 \log T_{\CC[w]}(K,\theta(t))\,dt  \ =  \ d(K_w),     \label{eqn:d3formula} 
\end{eqnarray}
where $\calB$ in (\ref{eqn:d2formula}) denotes the monomial basis of $\CC[w,z]_V$.  
\end{theorem}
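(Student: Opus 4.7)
The plan is to treat (\ref{eqn:d1formula}) and (\ref{eqn:d3formula}) as direct corollaries of Zaharjuta's classical theorem in $\CC^2$, and to adapt his Vandermonde argument to prove (\ref{eqn:d2formula}).

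For (\ref{eqn:d1formula}), the basis $\calB = \{z^\alpha\}$ and filtration $\bA = \{\CC[z]_{\leq k}\}$ involve only polynomials in $z$, so $\Van_\calB(\zeta_1, \ldots, \zeta_{m_n})$ depends only on the $z$-coordinates of the points, and hence $\Van_{\calB, m_n}(K) = \Van_{\calB, m_n}(K_z)$. Both equalities in (\ref{eqn:d1formula}) then follow from Zaharjuta's theorem \cite{zaharjuta:trans} applied to $K_z \subset \CC^2$, using $T_{\CC[z]}(K, \theta) = T_{\CC[z]}(K_z, \theta)$ as in Lemma \ref{lem:13}(2). The $w$-analogue of the same argument yields (\ref{eqn:d3formula}).

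For (\ref{eqn:d2formula}), I would mimic Zaharjuta's two-sided estimate using the shifted Chebyshev constants $Y_{K, \beta}(\alpha)$ from Proposition \ref{prop:14}. List $\calB = \{\be_j\}_{j \geq 1}$ with $\be_j = w^{\alpha_j} z^{\beta_j}$ in the ordering of Example \ref{ex:21}. For the upper bound, given $\zeta_1, \ldots, \zeta_{m_n} \in K$, for each $j$ pick $p_j = \be_j + \lot \in \CC[w,z]_V$ with $\|p_j\|_K$ within $(1+\varepsilon)$ of $Y_{K, \beta_j}(\alpha_j)$. Since substituting the $p_j$ for the $\be_j$ in the Vandermonde is a sequence of row operations, $|\Van_\calB(\zeta_1, \ldots, \zeta_{m_n})| = |\det[p_j(\zeta_k)]| \leq m_n! \prod_j \|p_j\|_K$, giving
$$\log \Van_{\calB, m_n}(K)^{1/l_n} \leq \frac{\log m_n!}{l_n} + \frac{1}{l_n} \sum_{\beta \in \calI} \sum_{|\alpha| \leq n - |\beta|/d} \log Y_{K, \beta}(\alpha).$$
The first term vanishes; by Proposition \ref{prop:14}, $\sum_{|\alpha|=s} \log Y_{K, \beta}(\alpha) \sim s\, h(s) \int_0^1 \log T_\calB(K, \theta(t))\, dt$ uniformly in $\beta \in \calI$. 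Summing first in $s$ and then in $\beta$ produces leading order $|\calI|(n^3/3) \int_0^1 \log T_\calB(K, \theta(t))\, dt$, while a parallel count gives $l_n \sim |\calI| n^3/3$, so the ratio converges to the integral in (\ref{eqn:d2formula}).

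For the matching lower bound I would take Fekete points $\zeta_1^{(n)}, \ldots, \zeta_{m_n}^{(n)}$ achieving $\Van_{\calB, m_n}(K)$ and carry out Zaharjuta's Gram-Schmidt-style induction: at each step $j$ extract from the Fekete-Vandermonde a polynomial $q_j = \be_j + \lot$ whose sup-norm on $K$ is comparable (up to subexponential factors) to $Y_{K, \beta_j}(\alpha_j)$, and telescope to obtain $\Van_{\calB, m_n}(K) \geq \prod_j Y_{K, \beta_j}(\alpha_j)$ up to subexponential error. The same asymptotic analysis as in the upper bound then closes the sandwich. The main obstacle is precisely this lower bound: Zaharjuta's classical argument is already delicate and relies on a single total-degree grading, whereas here the ordering is bi-graded in $d|\alpha| + |\beta|$ with basis indexed by $\NN_0^2 \times \calI$ rather than a single lattice. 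What rescues the accounting is Proposition \ref{prop:14}'s assertion that $Y_{K, \beta}(\alpha)^{1/|\alpha|}$ has the same limit $T_\calB(K, \theta)$ for every $\beta \in \calI$, collapsing the $\beta$-strata into a single integral against $T_\calB(K, \theta)$ and making the normalizations by $l_n$ match on both sides.
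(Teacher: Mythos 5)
Your proposal is correct and follows essentially the same route as the paper: (\ref{eqn:d1formula}) and (\ref{eqn:d3formula}) by Zaharjuta's classical theorem applied to the projections $K_z$, $K_w$, and (\ref{eqn:d2formula}) by sandwiching $\Van_{\calB,m_n}(K)$ between products of the shifted Chebyshev constants $Y_{K,\beta}(\alpha)$ and then invoking Proposition~\ref{prop:14} to collapse the $\beta$-strata into a single integral against $T_{\calB}(K,\theta)$. The only cosmetic difference is that you bound the full Vandermonde directly (row operations plus a Leibniz/Hadamard estimate, giving the $m_n!$ factor) where the paper bounds the ratio of consecutive Vandermondes as in (\ref{eqn:vratio}) and telescopes; these are equivalent after telescoping, and your Leja/Fekete extraction for the lower bound is exactly the paper's lower bound in (\ref{eqn:vratio}) (which in fact holds exactly, with no subexponential fudge, and is the bound the paper proves in detail while omitting the upper one).
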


\begin{proof}[Sketch of Proof.]
The derivation of these formulas uses the standard method of bounding ratios of Vandermonde determinants above and below.  Formulas (\ref{eqn:d1formula}) and (\ref{eqn:d3formula}) are the classical formula of Zaharjuta for the sets $K_z$ and $K_w$ in $\CC^2$.  
  We sketch the proof of (\ref{eqn:d2formula}), giving the main ideas but omitting a number of technical details.  

First, we have the inequality
\begin{equation}\label{eqn:vratio}
Y_{K,\beta(n)}(\alpha(n))\ \leq\    \frac{\Van_{\calB,n}(K)}{\Van_{\calB,n-1}(K)}\ \leq \  n Y_{K,\beta(n-1)}(\alpha(n-1))
\end{equation}
provided that $\Van_{\calB,n-1}(K)>0$.  The lower bound in (\ref{eqn:vratio}) can be established as follows. Let $\zeta_1,\ldots,\zeta_{n-1}\in K$, and write 
\begin{equation}\label{eqn:van11}
\Van_{\calB}(\zeta_1,\ldots,\zeta_{n-1},(w,z)) = \Van_{\calB}(\zeta_1,\ldots,\zeta_{n-1})\be_{n}(w,z) +\lot,
\end{equation}
where we expand (\ref{eqn:vandef})  down the last column.  If the points $\zeta_j$ are chosen to attain the sup in (\ref{eqn:vanKdef}) for $\Van_{\calB,n-1}(K)$, and this quantity is nonzero, we may factor it out of both terms on the right-hand side of (\ref{eqn:van11}) to obtain 
\begin{equation*}%\label{eqn:van12}
\Van_{\calB,n-1}(K)\left(\be_{n}(w,z) +\lot \right) = \Van_{\calB}(\zeta_1,\ldots,\zeta_{n-1},(w,z)).
\end{equation*}
Denote the polynomial in parethesis on the left-hand side by $$p(w,z)=\be_{n}(w,z) +\lot=w^{\alpha(n)}z^{\beta(n)}+\lot$$  and put $s_{\beta,n}:=|\alpha(n)|.$ 
Evaluate $p$ at a point $(w,z)=\zeta_n$ for which $|p(\zeta_n)|=\|p\|_K$ to obtain
$$
\Van_{\calB,n-1}(K)Y_{K,\beta(n)}(\alpha(n))\leq  \Van_{\calB,n-1}(K)\|p\|_K = \Van_{\calB}(\zeta_1,\ldots,\zeta_{n-1},\zeta_n)\leq \Van_{\calB,n}(K),
$$
and we have the lower bound in (\ref{eqn:vratio}).   

The upper bound is obtained by expanding the determinant in a different way; we omit the details.

Now (assuming all quantities are nonzero, see also the next remark) we use (\ref{eqn:vratio}) to estimate the telescoping product $$\dfrac{\Van_{\calB,m_n}(K)}{\Van_{\calB,m_n-1}(K)}\dfrac{\Van_{\calB,m_n-1}(K)}{\Van_{\calB,m_n-2}(K)}\cdots\dfrac{\Van_{\calB,m_{n-1}+1}(K)}{\Van_{\calB,m_{n-1}}(K)}$$ and obtain
\begin{equation}\label{eqn:van<<}
\prod_{k=m_{n-1}+1}^{m_n}Y_{K,\beta(k)}(\alpha(k))  \ \leq \  \frac{\Van_{\calB,m_n}(K)}{\Van_{\calB,m_{n-1}}(K)} \ \leq \  
 \frac{m_n!}{m_{n-1}!} \prod_{k=m_{n-1}}^{m_n-1}Y_{K,\beta(k)}(\alpha(k)) .
\end{equation}
Regrouping and relabelling terms, we rewrite the products  on each side of (\ref{eqn:van<<}) as 
\begin{equation} \label{eqn:prods}\begin{aligned}
\prod_{k=m_{n-1}+1}^{m_n}Y_{K,\beta(k)}(\alpha(k)) &= \prod_{\beta\in\calI} \Bigl(\prod_{|\alpha|=s_{\beta,n}} Y_{K,\beta}(\alpha) \Bigr),   \\ 
\prod_{k=m_{n-1}}^{m_n-1}Y_{K,\beta(k)}(\alpha(k))  &= C_n \prod_{\beta\in\calI} \Bigl(\prod_{|\alpha|=s_{\beta,n}} Y_{K,\beta}(\alpha) \Bigr) \quad\hbox{where } C_n=\tfrac{Y_{K,\beta(m_n)}(\alpha(m_n))}{Y_{K,\beta(m_{n-1})}(\alpha(m_{n-1}))}.   
\end{aligned}\end{equation}
 Then $s_{\beta,n}\to \infty$ as $n\to\infty$,  and by Proposition \ref{prop:14},
\begin{equation}\label{eqn:limch}
\frac{1}{ s_{\beta,n}h(s_{\beta,n})} \sum_{|\alpha|=s_{\beta,n}} \log Y_{K,\beta}(\alpha) \to \ \int_0^1\log T_{\calB}(K,\theta(t))\, dt \quad\hbox{as }n\to\infty.
\end{equation}
%We now estimate the transfinite diameter. 
Fix $\epsilon>0$; then (\ref{eqn:limch}) implies
\begin{equation}\label{eqn:36}
\prod_{\beta\in\calI}\Bigl(\prod_{|\alpha|=s_{\beta,n}} Y_{K,\beta}(\alpha) \Bigr) 
\leq  
(1+\epsilon)^{s_n}\exp\left( \int_0^1\log T_{\calB}(K,\theta(t))\, dt \right)^{s_n|\calI|}, 
\end{equation}
for sufficiently large $n$, where $s_n=\sum_{\beta\in\calI}s_{\beta,n}$. 

\medskip

 Consider the upper bound in (\ref{eqn:van<<}).   Using (\ref{eqn:36})  and  (\ref{eqn:prods}), 
\begin{equation}\label{eqn:van<}
\frac{\Van_{\calB,m_n}(K)}{\Van_{\calB,m_{n-1}}(K)} \ \leq \ \frac{m_n!}{m_{n-1}!}C_n(1+\epsilon)^{s_n}\exp\left( \int_0^1\log T_{\calB}(K,\theta(t))\, dt \right)^{s_n|\calI|}
\end{equation}
for sufficiently large $n$, say $n\geq N$.  Now apply (\ref{eqn:van<}) to the telescoping product
$$
\frac{\Van_{\calB,m_n}(K)}{\Van_{\calB,m_{n-1}}(K)}\frac{\Van_{\calB,m_{n-1}}(K)}{\Van_{\calB,m_{n-2}}(K)}\cdots
\frac{\Van_{\calB,m_{N+1}}(K)}{\Van_{\calB,m_{N}}(K)}\Van_{\calB,m_{N}}(K)
$$
to obtain the estimate 
\begin{equation}\label{eqn:vanp<}
\Van_{\calB,m_n}(K) \leq \Van_{\calB,m_{N}}(K)\frac{m_n!}{m_N!} D_{N,n}(1+\epsilon)^{t_{N,n}}   \exp\left( \int_0^1\log T_{\calB}(K,\theta(t))\, dt \right)^{t_{N,n}|\calI|}
\end{equation}
where 
$$D_{N,n}=\prod_{k=N}^nC_k=\dfrac{Y_{K,\beta(m_n)}(\alpha(m_n))}{Y_{K,\beta(m_{N})}(\alpha(m_{N}))} 
\  \hbox{ and }  \   t_{N,n}=\sum_{k=N}^ns_k.
$$
We take $l_n$-th roots on both sides of (\ref{eqn:vanp<}) and apply the following limits (calculations omitted) as $n\to\infty$:
$$
(m_n!)^{1/l_n}\to 1,\  \ D_{N,n}^{1/l_n}\to 1,\   \     \frac{t_{N,n}|\calI|}{l_n}\to 1.
$$
This yields
$$
\limsup_{n\to\infty}  (\Van_{\calB,m_n}(K))^{1/l_n} \leq (1+\epsilon)^{\frac{1}{|\calI|}}\exp\left(
\int_0^1\log T_{\calB}(K,\theta(t))\, dt\right),$$
and since $\epsilon>0$ was arbitrary,
$$%\begin{equation}\label{eqn:d(K)<}
\limsup_{n\to\infty}  (\Van_{\calB,m_n}(K))^{1/l_n} \leq \exp\left(
\int_0^1\log T_{\calB}(K,\theta(t))\, dt\right).
$$%\end{equation}

By a similar argument as above, this time using the lower bound in (\ref{eqn:van<<}),  
$$
\liminf_{n\to\infty}  (\Van_{\calB,m_n}(K))^{1/l_n} \geq \exp\left(
\int_0^1\log T_{\calB}(K,\theta(t))\, dt\right).
$$
These last two inequalities yield (\ref{eqn:d2formula}).
\end{proof}

\begin{remark}\rm
If $\Van_{\calB,n-1}(K)>0$ and $\Van_{\calB,n}=0$  then $Y_{K,\beta(n)}(\alpha(n))=0$ by (\ref{eqn:vratio}).  One can use a simple inductive argument to verify that $\Van_{\calB,m}(K)=0$ and $Y_{K,\beta(m)}(\alpha(m))=0$ for all $m\geq n$, so equation (\ref{eqn:d2formula}) holds trivially in this case.
\end{remark}

Fix a compact set $K\subset\CC^2$ and let 
\begin{equation}\label{eqn:Lgraph} L:=\{(w,z)\in V\colon w\in K\}. \end{equation} Then
$
L_w=K$  and  $L_z=f^{-1}(K)$.

\begin{corollary}\label{cor:35}
Let $K\subset\CC^2$ be compact, and let $L\subset V$ be defined as in (\ref{eqn:Lgraph}).  Then
$d^{(2)}(L)=d^{(3)}(L)=d(K)$.  
\end{corollary}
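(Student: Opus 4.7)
The plan is to string together the three main results that precede the corollary: Proposition \ref{prop:25} characterizing $\pi(D)$-invariance, Theorem \ref{thm:invariance} equating $T_{\calB}$ and $T_{\CC[w]}$ on invariant sets, and Theorem \ref{thm:zaharj} expressing each $d^{(j)}$ as an integral of the appropriate Chebyshev transform. The corollary is essentially a packaging of these facts, so the work is in checking the hypotheses line up.

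First I would verify that $L$ is $\pi(D)$-invariant. From the definition $L = \{(w,z)\in V : w\in K\}$, we read off $L_w=K$ and $L_z=f^{-1}(K)$, so $L_z = f^{-1}(L_w)$. This is exactly condition (3) of Proposition \ref{prop:25}, so by the equivalence there, $L$ satisfies condition (1), i.e., it is invariant under the $\pi(D)$-action on $V$. (One should also note $L$ is compact, being a closed subset of the preimage of a compact set under the proper projection $V\to\CC^2$ given by $w$-coordinates; compactness of $L_z = f^{-1}(K)$ uses properness of $f$, which follows from regularity implicit in the setup.)

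Second, I would apply Theorem \ref{thm:invariance} to conclude that
\begin{equation*}
T_{\calB}(L,\theta) = T_{\CC[w]}(L,\theta) \quad \hbox{for every } \theta\in\Sigma^{\circ}.
\end{equation*}
Integrating both sides against $dt$ over the segment $\theta(t)=(t,1-t)$ and invoking formulas (\ref{eqn:d2formula}) and (\ref{eqn:d3formula}) of Theorem \ref{thm:zaharj} yields $d^{(2)}(L)=d^{(3)}(L)$. Finally, the second equality in (\ref{eqn:d3formula}) identifies $d^{(3)}(L)$ with the classical transfinite diameter of the projection $L_w = K$, giving $d^{(3)}(L)=d(K)$. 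Chaining these gives $d^{(2)}(L)=d^{(3)}(L)=d(K)$.

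I do not expect any real obstacle here: the heavy lifting was done in Sections \ref{sec:invariance} and \ref{sec:transfinite}. The only subtlety worth flagging is confirming that $L$ falls within the scope of the $\pi(D)$-action as set up in Section \ref{sec:invariance} (irreducibility of $V$, and that $L$ is compact so the invariance-based Chebyshev comparison applies), but this is straightforward bookkeeping rather than a genuinely new argument.
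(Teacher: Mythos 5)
Your proposal is correct and follows essentially the same three-step chain as the paper's own proof: invoke Proposition \ref{prop:25} to get $\pi(D)$-invariance of $L$ from $L_z=f^{-1}(L_w)$, then Theorem \ref{thm:invariance} to equate the two Chebyshev transforms, then formulas (\ref{eqn:d2formula}) and (\ref{eqn:d3formula}) of Theorem \ref{thm:zaharj} to conclude. The side remark about compactness of $L$ via properness of $f$ is a reasonable bookkeeping observation the paper leaves implicit, but it does not change the argument.
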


\begin{proof} 
We have $K=L_w$ and $L_z=f^{-1}(L_w)$.  By Proposition \ref{prop:25}, $L$ is invariant under the group action of $\pi(D)$ described  in Section \ref{sec:invariance}.  Hence by Theorem \ref{thm:invariance}, $T_{\calB}(L,\theta(t))=T_{\CC[w]}(L,\theta(t))$ for all $t\in(0,1)$, where $\theta(t)=(t,1-t)$.  Finally, by formulas (\ref{eqn:d2formula}) and (\ref{eqn:d3formula}) of the previous theorem, $d^{(2)}(L)=d^{(3)}(L)$.  
\end{proof}

The relation with $d^{(1)}(L)$ will be studied in the next section.

\section{Pullback formula}\label{sec:pullback}

%In this section we give an application of our theorem in the previous section.  % to the study of transfinite diameter. 
%  which is    We outline a new proof in $\CC^2$ of a pullback formula for transfinite diameter  that was established in \cite{demarcorumely:transfinite}.

  Let $d\in\NN$ and $(w_1,w_2)=f(z_1,z_2)$ where $f=(f_1,f_2)$ is a regular polynomial mapping of degree $d$.  This means that $\hat f^{-1}(0,0)=\{(0,0)\}$, where $\hat f = (\hat f_1,\hat f_2)$ denotes the leading homogeneous part of $f$.    Its components are of the form
\begin{equation} \label{eqn:fhat} \begin{aligned}
\hat f_1(z) &= a_{d}z_1^d +a_{d-1}z_1^{d-1}z_2+\cdots+a_{0}z_2^d,  \\
\hat f_2(z)  &= b_{d}z_1^d +b_{d-1}z_1^{d-1}z_2+\cdots+b_{0}z_2^d.
\end{aligned}\end{equation}

\begin{remark}\label{rem:reg} \rm
Note that $\hat f_1,\hat f_2$ are products of linear factors, whose zero sets are lines through the origin. When $f$ is not regular, $\hat f^{-1}(0,0)$ must contain at least one complex line through the origin which is the zero set of a common factor.
\end{remark}

We will prove the following special case (in dimension 2) of the main result of \cite{demarcorumely:transfinite}.

\begin{theorem}\label{thm:resthm}
Let $d\in\NN$ and let $f:\CC^2\to\CC^2$ be a regular polynomial map of degree $d$. For any compact set $K\subset\CC^2$, we have the formula
$$
d(f^{-1}(K)) = |\Res(\hat f)|^{-1/(2d^2)}d(K)^{1/d}.
$$ 
\end{theorem}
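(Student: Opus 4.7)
The plan is to relate $d^{(1)}(L) = d(f^{-1}(K))$ (from case (1) of Example \ref{ex:KinV}) to $d^{(2)}(L) = d(K)$ (from Corollary \ref{cor:35}) via the substitution $w = f(z)$, where $L$ is defined as in (\ref{eqn:Lgraph}). The key tool is the pullback $f^{*}\colon \CC[w,z]_V \to \CC[z]$ from (\ref{eqn:iso}), which sends a basis monomial $w^{\alpha}z^{\beta}\in\calB$ to $\hat f^{\alpha}z^{\beta}+\lot$, a polynomial of $z$-degree exactly $d|\alpha|+|\beta|$ (the leading homogeneous term is nonzero by regularity of $\hat f$). Restricted to $A_n\to\CC[z]_{\leq dn}$, $f^{*}$ is an isomorphism between spaces of equal dimension $m_n = \binom{dn+2}{2}$. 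Let $M_n$ be its matrix in the bases $\calB_n$ and $\{z^{\gamma}\colon |\gamma|\leq dn\}$. For points $\zeta_j = (f(z^{(j)}),z^{(j)})\in L$, $j=1,\ldots,m_n$, factoring each Vandermonde entry through $f^{*}$ gives
$$\Van_{\calB}(\zeta_1,\ldots,\zeta_{m_n}) = \det(M_n)\cdot \Van_{\{z^{\gamma}\}}(z^{(1)},\ldots,z^{(m_n)}),$$
and since $\det M_n$ does not depend on the points, maximizing over $\zeta_j\in L$ (equivalently $z^{(j)}\in f^{-1}(K)$) yields $\Van_{\calB, m_n}(L) = |\det M_n|\cdot \Van_{\{z^{\gamma}\}, m_n}(f^{-1}(K))$.

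Next I would show $|\det M_n| = |\Res(\hat f)|^{r_n}$ with $r_n\sim dn^3/6$. Ordering both bases by total $z$-degree makes $M_n$ block upper triangular, with diagonal block $M_n^{(k)}$ representing the homogeneous map $w^{\alpha}z^{\beta}\mapsto \hat f^{\alpha}z^{\beta}$ into $\CC[z]_{=k}$ at each degree $k = 0, \ldots, dn$. For a regular $\hat f$ (whose generic initial ideal agrees with that of $f$) each $M_n^{(k)}$ is square and invertible, and $\det M_n^{(k)}$ is a polynomial in the coefficients of $\hat f$ that vanishes iff $\hat f$ is non-regular, iff $\Res(\hat f) = 0$. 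Since $\Res(\hat f)$ is irreducible as a polynomial in those coefficients, $\det M_n^{(k)} = C_k\cdot \Res(\hat f)^{c_d(k)}$ for some $C_k\in\CC^{*}$ and nonnegative integer $c_d(k)$. Counting bi-homogeneity in the coefficients of $\hat f_1, \hat f_2$ gives $c_d(k)\cdot d = \sum_{(\alpha,\beta)}\alpha_1$, summed over source indices at $z$-degree $k$; combined with $\sum_{|\alpha|=s}\alpha_1 = s(s+1)/2$ and summing over $\beta\in\calI$ with $|\beta|\equiv k\pmod d$, this gives the asymptotic $c_d(k)\sim k^2/(2d^2)$, hence $r_n = \sum_{k=0}^{dn}c_d(k)\sim dn^3/6$.

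To conclude I would take $l_n^{(2)}$-th roots of the Vandermonde identity and pass to the limit. Writing $l^{(1)}_{dn}$ for the length used in the classical Fekete-Leja construction at level $dn$, Zaharjuta's formula gives $\Van_{\{z^{\gamma}\}, m_n}(f^{-1}(K))^{1/l^{(1)}_{dn}}\to d(f^{-1}(K))$; and since $l^{(1)}_{dn}\sim d^3n^3/3$ and $l_n^{(2)}\sim d^2n^3/3$, the ratio $l^{(1)}_{dn}/l_n^{(2)}\to d$. Combined with $r_n/l_n^{(2)}\to 1/(2d)$ and the estimate that $\prod_k|C_k|$ grows at most subexponentially in $l_n^{(2)}$ (so contributes $1$ in the limit), the right-hand side tends to $|\Res(\hat f)|^{1/(2d)}\cdot d(f^{-1}(K))^d$, while the left-hand side equals $d^{(2)}(L) = d(K)$. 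Thus
$$d(K) = |\Res(\hat f)|^{1/(2d)}\, d(f^{-1}(K))^d,$$
which rearranges to Theorem \ref{thm:resthm}.

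The main obstacle will be the second step: rigorously pinning down $\det M_n^{(k)}$ as a power of $\Res(\hat f)$ with the stated exponent and handling the constants $C_k$. The cleanest approach is to evaluate $\det M_n^{(k)}$ and $\Res(\hat f)$ on a convenient test mapping $\hat f$ (for which $\calI$ coincides with the generic choice and $M_n^{(k)}$ is directly computable, e.g., becomes close to a permutation matrix), which fixes $|C_k|$ and verifies it grows only subexponentially in $k$ so that the constants wash out upon taking $l_n^{(2)}$-th roots.
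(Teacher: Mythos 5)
Your overall strategy—lift $K$ to the graph $V$, pass between the bases $\calB\subset\CC[w,z]_V$ and $\{z^\gamma\}\subset\CC[z]$ via $f^*$, and extract the resultant as a change-of-basis determinant $\det M_n$—is the same as the paper's. Where you diverge is in how you identify the determinant of each diagonal block $M_n^{(k)}$. You argue by irreducibility: $\det M_n^{(k)}$ vanishes iff $\hat f$ is non-regular, iff $\Res(\hat f)=0$, so it must be a constant times a power of $\Res(\hat f)$, and bi-homogeneity then pins down the power. This step is wrong. The blocks built from the basis $\calB$ (equivalently $\calF_{k-1,k}$) depend on the \emph{generic} initial ideal structure, and the determinant can vanish at regular $\hat f$ whose initial ideal degenerates. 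Concretely, for $d=2$ the generic $\calI$ is $\{1,z_1,z_2,z_1^2\}$ (Example \ref{ex:13}), so $\calF_{3,4}=\{w_1z_1^2,w_2z_1^2,w_1^2,w_1w_2,w_2^2\}$ and the block at $k=4$ encodes $\hat f_1 z_1^2,\ \hat f_2 z_1^2,\ \hat f_1^2,\ \hat f_1\hat f_2,\ \hat f_2^2$. Take $\hat f_1=z_1^2$, $\hat f_2=z_2^2$: this is regular with $\Res(\hat f)=1$, yet $\hat f_1 z_1^2=\hat f_1^2$ and $\hat f_2 z_1^2=\hat f_1\hat f_2$, so the block has rank $3$ and $\det M_n^{(4)}=0$. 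In fact expanding along the last two columns of that $5\times 5$ block shows $(a_1b_0-a_0b_1)$—precisely the genericity condition from Example \ref{ex:21}, not the resultant—is a factor. Since $\det M_n^{(k)}$ has extra irreducible factors, the bi-homogeneity count $c_d(k)\cdot d=\sum\alpha_1$ only gives an upper bound on the resultant exponent, not its value (for $k=4$, $d=2$ your formula gives $c_2(4)=2$, but the actual power of $\Res$ dividing $\det M_n^{(4)}$ is $1$); and the "constants" $C_k$ absorb evaluations of these spurious factors at $\hat f$, which are not harmless.

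This is exactly the obstruction that motivates the paper's modified basis $\calC$ (the collections $\calG_{k-1,k}$). That basis is constructed so that Lemma \ref{lem:reg} guarantees the diagonal blocks are invertible for \emph{every} regular $\hat f$—not just the generic ones—and the explicit block computation in Proposition \ref{prop:613} shows each change-of-variable step contributes a literal copy of the Sylvester matrix of $\hat f$, so $\det\bR_k$ is exactly a power of $\Res(\hat f)$ with no extraneous factors. Lemma \ref{lem:66} then shows $d_{\calC}$ and $d_{\calB}$ agree, which is the price paid for the cleaner block structure. To repair your argument you would need either to switch to $\calG$ (at which point you essentially recover the paper's proof), or to analyze the extraneous factors of $\det M_n^{(k)}$ and show they both have the right total degree deficit and contribute only $O(1)^{o(n^3)}$ when evaluated at $\hat f$—a substantially harder task than the irreducibility shortcut suggests.
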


On both sides, $d$ denotes the Fekete-Leja transfinite diameter.   
The term $\Res(\hat f)$ on the right-hand side is the resultant of $\hat f$, which  is the $2d\times 2d$  determinant 
$$
\Res(\hat f) \ = \  \Res(\hat f_1,\hat f_2) \ := \    
\det \begin{bmatrix}
a_d & a_{d-1} & a_{d-2} & \cdots   & a_1&a_0 & \  & \  & \ \\
\  & a_d & a_{d-1} & a_{d-2}  & \cdots  &a_1& a_0     & \  \\
\  & \  & \ & \ddots & \   & \  & \   & \ddots &\   \\
\ & \ & \ & \ & a_d & a_{d-1} & a_{d-2} & \cdots & a_0    \\
b_d & b_{d-1} & b_{d-2} & \cdots   & b_1&b_0 & \  & \  & \ \\
\  & b_d & b_{d-1} & b_{d-2}  & \cdots  &b_1& b_0     & \  \\
\  & \  & \  & \ddots & \   & \  & \  & \ddots & \   \\
\ & \ & \ & \ & b_d & b_{d-1} & b_{d-2} & \cdots & b_0  
\end{bmatrix} , 
$$
where $a_{j},b_j$ are defined as in (\ref{eqn:fhat}) and the triangular regions above the $a_0,b_0$ diagonals and below the $a_d,b_d$ diagonals are filled with zeros.

 We relate the transfinite diameters  $d(f^{-1}(K)),d(K)$ to transfinite diameters on the graph $$V=\{(w,z)\colon w=f(z)\}\subset\CC^2\times\CC^2.$$% described in Example \ref{ex:KinV}.  
Also, write $\hat V = \{(w,z)\colon w=\hat f(z)\}$.  

Let $K\subset\CC^2$ be compact, and let $L\subset V$ be defined as in (\ref{eqn:Lgraph}).  Then 
$$
d(f^{-1}(K))=d(L_z)=d^{(1)}(L),\quad d(K) = d(L_w)=d^{(3)}(L)=d^{(2)}(L)
$$
where we use Corollary \ref{cor:35}.  Hence we need to relate $d^{(1)}(L)$ to $d^{(2)}(L)$. 
%  Specifically, we will derive a formula relating   $\Van_{\CC[z]}(\zeta_1,\ldots,\zeta_{m_n})$ to $\Van_{\calB}(\zeta_1,\ldots,\zeta_{m_n})$ using the substitution $w=f(z)$, where $m_n$ denotes the dimension of $\CC[z]_{\leq n}$.  We will

 We consider a filtration of the monomials in $\CC[w,z]_V$.   Similar to (\ref{eqn:iso}),  let  
$\hat f^*\colon\CC[w,z]\to\CC[z]$ be defined by $\hat f^*(p(w,z)) = p(\hat f(z),z)$.     Let
$$
\calF_{n}=\{   w^{\alpha}z^{\beta}\in\CC[w,z]_V \colon \hat f^*(w^{\alpha}z^{\beta})\in\CC[z]_{\leq n}\}.
$$
We also let $\calF_{m,n}:=\calF_{n}\setminus\calF_m$ when $m<n$; similarly, define $\calB_{m,n}$ where $\calB$ is the monomial basis of $\CC[w,z]_{V}$.   

\begin{notation}\rm
Given a collection $\calM$ of monomials in $\CC[w,z]$, put $\CC[w,z]_{\calM}:=\span(\calM)$.  

Hence  $A_k=\CC[w,z]_{\calB_k}$, $\CC[w,z]_{\calF_{m,n}}=\CC[w,z]_{\calF_n}\setminus \CC[w,z]_{\calF_m}$, etc.
\end{notation}

The following lemma is a straightforward consequence of the definitions.  
\begin{lemma}
\begin{enumerate}\item 
There are $n+1$ monomials in $\calF_{n-1,n}$, given by 
$$
\{w^{\alpha}z^{\beta}\in\CC[w,z]_V\colon d|\alpha|+|\beta|=n\}.
$$
\item For each $k\in\NN$,   $\calB_k = \calF_{dk}$.  %Hence $A_k=F_{dk}$ and $\calB_{j,k} =\calF_{dj,dk}$ when $j<k$.  
 \end{enumerate} \qed
\end{lemma}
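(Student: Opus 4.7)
The plan is to reduce both parts to a single observation about the weighted degree. Since $\hat f_1,\hat f_2$ are nonzero homogeneous polynomials of degree $d$ in $z$ (nonvanishing by regularity), the polynomial $\hat f^*(w^\alpha z^\beta)=\hat f_1^{\alpha_1}\hat f_2^{\alpha_2}z^\beta$ is homogeneous of degree exactly $d|\alpha|+|\beta|$. Consequently $w^\alpha z^\beta\in\calF_n$ iff $d|\alpha|+|\beta|\leq n$, which immediately yields the description of $\calF_{n-1,n}$ claimed in~(1). For~(2), since $f_i=\hat f_i+\LOT$, the polynomial $f^*(w^\alpha z^\beta)=f^\alpha z^\beta$ has leading term $\hat f^\alpha z^\beta$ of degree exactly $d|\alpha|+|\beta|$, so $w^\alpha z^\beta\in\calB\cap A_k$ iff $d|\alpha|+|\beta|\leq dk$, iff $w^\alpha z^\beta\in\calF_{dk}$.

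To count $|\calF_{n-1,n}|$, I would show that $f^*$ restricts to a vector-space isomorphism $\CC[w,z]_{\calF_n}\to\CC[z]_{\leq n}$; then $\dim\CC[w,z]_{\calF_n}=\binom{n+2}{2}$ and $|\calF_{n-1,n}|=\binom{n+2}{2}-\binom{n+1}{2}=n+1$. The inclusion $f^*(\CC[w,z]_{\calF_n})\subseteq\CC[z]_{\leq n}$ and injectivity follow from the leading-term observation together with the fact that $f^*\colon\CC[V]\to\CC[z]$ is a ring isomorphism. For surjectivity, given $p\in\CC[z]_{\leq n}$, write $p=f^*q$ with $q$ in normal form $\sum c_\mu\be_\mu$ and let $M$ be the largest weighted degree of a nonzero term. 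If $M>n$ then the degree-$M$ part of $f^*q=p$ must vanish, yielding a linear relation
$$\sum_{\mathrm{wdeg}(\be_\mu)=M}c_\mu\,\hat f^{\alpha_\mu}z^{\beta_\mu}\;=\;0$$
in $\CC[z]$, obtained from applying $\hat f^*$ to the weighted-top part of $q$.

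The main obstacle is ruling out this relation, i.e., showing that the basis monomials of $V$ remain linearly independent modulo $I_{\hat V}=\langle w-\hat f\rangle$. This is equivalent to the containment $\mathrm{in}_\prec I_{\hat V}\subseteq\mathrm{in}_\prec I_V$ in the weighted ordering of Example~\ref{ex:21}, which I would verify directly: take $g\in I_{\hat V}$ with leading monomial $m$ of weighted degree $M$; since $I_{\hat V}$ is weighted-homogeneously generated in degree $d$, one can choose a presentation $g=g_1(w_1-\hat f_1)+g_2(w_2-\hat f_2)$ with each $\mathrm{wdeg}(g_i)\leq M-d$. Then $g'\mathrel{:=}g_1(w_1-f_1)+g_2(w_2-f_2)\in I_V$ differs from $g$ by $-g_1 r_1-g_2 r_2$, whose weighted degree is at most $(M-d)+(d-1)=M-1$; hence $g'$ has the same leading monomial $m$, giving $m\in\mathrm{in}_\prec I_V$ and completing the argument.
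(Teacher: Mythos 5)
Your reductions of the description in part (1) and of part (2) to the observation that $\hat f^{\alpha}z^{\beta}$ is homogeneous of degree exactly $d|\alpha|+|\beta|$ (using that $\hat f_1,\hat f_2\neq 0$, guaranteed by regularity) are fine, and the strategy of deducing $|\calF_{n-1,n}|=n+1$ by showing the substitution map restricts to an isomorphism onto $\CC[z]_{\leq n}$ is a reasonable route to the count. The gap is in the final step, where you claim the linear independence of the basis monomials modulo $I_{\hat V}$ is equivalent to $\mathrm{in}_{\prec}I_{\hat V}\subseteq\mathrm{in}_{\prec}I_V$ in the weighted ordering of Example \ref{ex:21}. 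That equivalence requires the standard monomials of $I_V$ with respect to $\prec$ to coincide with the basis $\calB$; but $\calB$ was built in Section \ref{sec:nform} from grevlex by total degree, \emph{not} the weighted ordering. In the weighted ordering one computes $\mathrm{in}_\prec(w_i-f_i)=w_i$ (the tie-breaker favours the larger $w$-multidegree), and since $w_1,w_2$ are coprime this already forces $\mathrm{in}_\prec I_V=\mathrm{in}_\prec I_{\hat V}=\langle w_1,w_2\rangle$. So the inclusion you derive is trivially true and carries no information, while the standard monomials it singles out are exactly $\CC[z]$, which is not $\calB$. Thus the argument does not rule out the relation $\sum c_\mu\hat f^{\alpha_\mu}z^{\beta_\mu}=0$.

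A correct route is to transfer everything to $\CC[z]$: the relation you want to exclude says precisely that the normal-form monomials $\calB$ fail to be linearly independent in $\CC[w,z]_{\hat V}$, i.e.\ that $\calI$ differs for $V$ and $\hat V$. Since grevlex on $\CC[z]$ is degree-compatible and $\hat f_i$ is the top-degree part of $f_i$, the leading-form substitution argument (now carried out inside $\CC[z]$, with $g_1,g_2\in\CC[z]$ homogeneous) shows $\mathrm{in}\langle\hat f_1,\hat f_2\rangle\subseteq\mathrm{in}\langle f_1,f_2\rangle$ in grevlex; regularity and B\'ezout give both ideals colength $d^2$, forcing equality, so the set $\calI$ is the same for $f$ and $\hat f$. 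Then $\hat f^*\colon\CC[w,z]_{\hat V}\to\CC[z]$ is a weighted-to-standard graded isomorphism, and the weighted-degree-$n$ slice of $\calB$ has size $\dim\CC[z]_{=n}=n+1$. The paper itself gives no proof of the lemma (it is labeled a straightforward consequence of the definitions), so your attempt is filling in something the author omitted, but as written the key step does not stand.
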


\begin{remark}\rm
We will assume in what follows that $f$ is as in Example \ref{ex:13}, where the powers of $z$ in $\calB$ are given by monomials not in $\langle z_1z_2^{d-1},z_2^d\rangle$, i.e., the powers of $z_2$ are as small as possible.  Pre- and post-composing with generic rotations  $R_1,R_2$ will transform $f$ into a mapping $g:=R_2\circ f\circ R_1$ as in that example.  This changes the set $f^{-1}(K)$ in Theorem \ref{thm:resthm} to $ g^{-1}(K)$, and the resultant to $\Res(\hat g)=\Res(R_2\circ\hat f\circ R_1)$.  We have $d(f^{-1}(K)) = d(g^{-1}(K))$ by invariance of the Fekete-Leja transfinite diameter under a rotation, and  $$\Res(\hat g)=\Res(R_2\circ\hat f\circ R_1)=\det(R_2)^d\Res(\hat f)\det(R_1)^{d^2}=\Res(\hat f)$$ by standard properties of resultants and the fact that $\det(R_1)=\det(R_2)=1$.  Hence the formula for $f$ follows from the formula for $g$, so we can reduce to this case. \end{remark}

To simplify calculations it will be convenient to use a basis of monomials that is slightly different to $\calB$. 
 We define another collection of monomials.  When $n\in\{0,\ldots,2d-2\}$ put $\calG_n=\calF_n$.  Otherwise, 
\begin{equation}\label{eqn:dkr} n=(d-1)+kd+r \hbox{ for some }k\in\NN \hbox{ and }r\in\{0,\ldots,d-1\}.\end{equation}  Then we inductively define
$$
\calG_n := \calG_{n-1}\cup   \{w^{\alpha}z_1^{j} z_2^{d-1-j}\colon |\alpha|=k, j=0,\ldots,d-1\} $$
if $r=0$,  and
$$
\calG_n:= \calG_{n-1} \cup \{ w^{\alpha}z_1^{d-1-j+r} z_2^{j}\colon |\alpha|=k,\  j=0,\ldots,d-1\} \cup \{z_1^{j}z_2^{n-j}\colon j=0,\ldots,r-1      \} $$ 
if $r\neq 0$.  %Finally put $\calC=\bigcup_n\calC_n$.  

Put $\calG_{m,n}=\calG_{n}\setminus\calG_{m}$.  Following the pattern in the above lemma, define 
$\calC_k:=\calG_{dk}$ for each $k\in\NN$ and $\calC:=\bigcup_k \calC_k$.  
Observe that (parts of) the bases can be constructed inductively.   For example, when $n$ is sufficiently large,
$$ 
\calF_{n+d-1,n+d}=w_1\calF_{n-1,n}\cup w_2\calF_{n-1,n}, \quad \tilde\calG_{n+d-1,n+d}=w_1\tilde\calG_{n-1,n}\cup w_2\tilde\calG_{n-,n}
$$
where $\tilde\calG_{n-1,n}=\calG_{n-1,n}\setminus\{z_1z_2^{n-j}\colon j=0,\ldots,r-1\}$.    (Note that in both cases there is lots of overlap in the two pieces of the union.) A specific illustration is given a bit later in Example \ref{ex:d=2ex}. 

The elements of $\calG_{n-1,n}$, constructed at each stage, form the correct number for a basis. The issue is whether they are linearly independent. We have the following lemma which relies on the fact that $f$ is regular.
\begin{lemma}\label{lem:reg}
The collection of monomials $\calG_{2d-1}$ is linearly independent.
\end{lemma}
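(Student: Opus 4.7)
The plan is to transfer the linear independence question from $\CC[V]$ to $\CC[z]$ via the isomorphism $f^*$ of (\ref{eqn:iso}), and then extract the top-degree piece of the resulting polynomial identity in order to invoke the regularity of $f$.

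Split $\calG_{2d-1} = \calF_{2d-2} \cup \calN$, where $\calN := \{w_i z_1^j z_2^{d-1-j} : i \in \{1,2\},\ j = 0, \ldots, d-1\}$ is the collection of $2d$ monomials adjoined at step $n = 2d-1$. Suppose $\sum_{m \in \calG_{2d-1}} c_m m = 0$ in $\CC[V]$. Applying $f^*$ yields the polynomial identity
\[
\sum_{m \in \calF_{2d-2}} c_m\, f^*(m) \ + \ \sum_{i,j} c_{w_i z_1^j z_2^{d-1-j}}\, f_i(z)\, z_1^j z_2^{d-1-j} \ = \ 0
\]
in $\CC[z]$. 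The estimate $\deg f^*(w^\alpha z^\beta) \leq d|\alpha| + |\beta|$, valid for any monomial, shows that the first sum contributes only terms of total degree at most $2d-2$, while each summand of the second sum has degree exactly $2d-1$, with top-degree homogeneous part $\hat f_i(z)\, z_1^j z_2^{d-1-j}$ (since $f_i = \hat f_i + \lot$).

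Extracting the degree-$(2d-1)$ component of the identity gives
\[
\hat f_1(z)\, p_1(z) + \hat f_2(z)\, p_2(z) = 0, \qquad p_i(z) := \sum_{j=0}^{d-1} c_{w_i z_1^j z_2^{d-1-j}}\, z_1^j z_2^{d-1-j},
\]
with each $p_i$ homogeneous of degree $d-1$. This is the crux, and where regularity enters: the hypothesis $\hat f^{-1}(0,0) = \{(0,0)\}$ forces the degree-$d$ binary forms $\hat f_1, \hat f_2$, each a product of linear forms in $\CC[z_1, z_2]$, to share no common linear factor, and hence be coprime in $\CC[z_1, z_2]$. From coprimality one gets $\hat f_2 \mid p_1$ and $\hat f_1 \mid p_2$; but $\deg p_i = d - 1 < d = \deg \hat f_j$ forces $p_1 \equiv p_2 \equiv 0$. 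Therefore $c_m = 0$ for every $m \in \calN$.

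The residual identity $\sum_{m \in \calF_{2d-2}} c_m m = 0$ in $\CC[V]$ then involves only monomials from the basis $\calB$, since $\calF_{2d-2} \subset \calB$ by construction, so the remaining coefficients vanish by linear independence of $\calB$. The essentially unique substantive step is the coprimality argument on $\hat f_1, \hat f_2$; the rest is degree bookkeeping.
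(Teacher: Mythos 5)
Your proof is correct and follows essentially the same route as the paper's: reduce to the new monomials $\calG_{2d-2,2d-1}$, substitute $w=f(z)$, pass to the top-degree homogeneous part to obtain $\hat f_1 p_1 + \hat f_2 p_2 = 0$ with $\deg p_i = d-1$, and invoke regularity to force $p_1=p_2=0$. Your final step (coprimality of $\hat f_1,\hat f_2$ gives $\hat f_2\mid p_1$ and $\hat f_1\mid p_2$, then a degree count) is a slightly tidier formulation of the paper's appeal to the absence of a common linear factor, but the substance is identical.
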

\begin{proof}
When $n<2d-1$, $\calG_n=\calF_n$, i.e., the monomials are normal forms in $\CC[w,z]_V$ which are automatically linearly independent.   Hence to study the linear independence of $\calG_{2d-1}$, we only need to study the linear independence of 
$$\calG_{2d-2,2d-1} = \{w_1z_1^{d-1},w_1z_1^{d-2}z_2,\ldots,w_1z_2^{d-1},w_2z_1^{d-1},w_2z_1^{d-2}z_2,\ldots,w_2z_2^{d-1}\}.$$
We need to verify that any linear combination of monomials does not reduce to a linear combination of monomials in  $\calF_{2d-2}$.   Substituting for $w_1,w_2$ in the above elements, we obtain the collection of polynomials
$$
\{ f_1(z)z_1^{d-1},\ldots, f_1(z)z_2^{d-1}, f_2(z)z_1^{d-1},\ldots, f_2(z)z_2^{d-1}\}\subset\CC[z]_{2d-1}.
$$
We need to show that no linear combination of these polynomials has degree strictly less than $2d-1$.  This is equivalent to showing that if 
\begin{equation}\label{eqn:l66}
c_1z_1^{d-1}\hat f_1(z)+\cdots + c_dz_2^{d-1}\hat f_1(z) + c_{d+1}z_1^{d-1}\hat f_2(z)+\cdots + c_{2d}z_2^{d-1}\hat f_1(z)=0 %\hbox{ for all } z,
\end{equation}
for all $z$, then $c_j=0$ for all $j=1,\ldots,2d$.  If not, then the left-hand side of (\ref{eqn:l66}) is a nonzero homogeneous polynomial, given by a  product of linear factors at least one of which is nonzero. Clearly we cannot factor out either $z_1$ or $z_2$ from this sum, so the nonzero linear factor must be a common factor of $\hat f_1$ and $\hat f_2$.  This contradicts the fact that $f$ is regular.  
\end{proof}

For larger values of $n$, we can use the lemma to verify that the monomials in $\calG_n$ are linearly independent by an inductive argument based on the step by step construction described above.  We omit the details. 

% Let $\CC[w,z]_{\calC}$ be the vector space of polynomials given by linear combinations of monomiaNls in $\calC$. 
% Define $\CC[w,z]_{\calC_n}$, $\CC[w,z]_{\calC_{m,n}}$, $\CC[w,z]_{\calF}$, etc. to be the vector space of polynomials given by lin a similar way.  

\begin{example} \label{ex:d=2ex}\rm
We illustrate these bases using a simple example. Let $d=2$ and suppose $\calI=\{1,z_1,z_2,z_1^2\}$.  (This is the generic case, recall Example \ref{ex:13}.)  Then
$$ 
\calF_{4}=\{1,z_1,z_2,w_1,w_2,z_1^2,w_1z_1,w_1z_2,w_2z_1,w_2z_2,w_1z_1^2,w_2z_1^2,w_1^2,w_1w_2,w_2^2\}$$ while
$$
\calG_4 =\{1,z_1,z_2,w_1,w_2,z_1^2,w_1z_1,w_1z_2,w_2z_1,w_2z_2,w_1z_1^2,w_1z_1z_2,w_2z_1^2,w_2z_1z_2,z_2^4\}     .$$  
The last four monomials (which are in $\calF_{3,4}$ and $\calG_{3,4}$ respectively) are slightly different.  %   The monomials at the next level are obtained as follows:
%$$\calF_{3,4}=w_1\calF_{1,2}\cup w_2\calF_{1,2}, \quad \calG_{3,4} = z_1\calF_{2,3}\cup \{z_2^4\}.$$
At the next level, 
$$\begin{aligned}
\calF_{4,5}=w_1\calF_{2,3}\cup w_2\calF_{2,3}, &\qquad \calG_{4,5}=w_1\calG_{2,3}\cup w_2\calG_{2,3}, \\
\calF_{5,6}=w_1\calF_{3,4}\cup w_2\calF_{3,4}, &\qquad \calG_{5,6}=w_1\tilde\calG_{3,4}\cup w_2\tilde\calG_{3,4}\cup\{z_2^6\}
\end{aligned}$$
where $\tilde\calG_{3,4}=\calG_{3,4}\setminus\{z_2^4\}$.  
More generally, one can show inductively that 
$$\begin{aligned}
\calF_{n,n+1}\  = \bigcup_{|\alpha|=n-1}\!\! w^{\alpha}\calF_{2,3},&\quad 
\calG_{n,n+1} \  = \bigcup_{|\alpha|=n-1}\!\! w^{\alpha}\calG_{2,3} \quad\hbox{if $n$ is even}, \\
\calF_{n,n+1}\  = \bigcup_{|\alpha|=n-1}\!\! w^{\alpha}\calF_{3,4},&\quad 
\calG_{n,n+1} \  = \bigcup_{|\alpha|=n-1}\!\! w^{\alpha}\tilde\calG_{3,4}\cup \{z_2^{n+1}\} \quad\hbox{if $n$ is odd}.
\end{aligned}$$
Observe that
$$
\calG_{2,3}=\calF_{2,3}=\{w_1z_1,w_1z_2,w_2z_1,w_2z_2\}, 
$$
so that when $n$ is even, $\calG_{n,n+1}=\calF_{n,n+1}$. When $n$ is odd, apart from $z_2^{n+1}$, one can use the inductive formula to match up monomials $w^{\alpha}z^{\beta}\in\calG_{n,n+1}$ with monomials $w^{\alpha'}z^{\beta'}\in\calF_{n,n+1}$ so that the differences $|\alpha-\alpha'|$ and $|\beta-\beta'|$ are uniformly bounded above by the maximum difference in powers between pairs of monomials in $\calF_{3,4}\cup\calG_{3,4}$.  
\end{example}

\begin{example}\label{ex:gend}\rm
For a slightly more general example, let $d\geq 2$ and consider $n=2d$ (i.e., $k=1$ and $r=1$ in (\ref{eqn:dkr})).  We have
$$\begin{aligned}
\calG_{2d-1,2d} &=\{w_1z_1^{d},\ldots,w_1z_1z_2^{d-1},w_2z_1^{d},\ldots,w_2z_1z_2^{d-1}, z_2^{2d}\}, \\
\calF_{2d-1,2d} &= \{w_1^2,w_1w_2,w_2^2, w_1z_1^{d},w_2z_1^{d},\ldots,w_1z_1^{2}z_2^{d-2},  w_2z_1^2z_2^{d-2}  \}.   
\end{aligned}$$
In both cases there are $2d+1$ monomials.  The restriction of $\hat f^*$ to $\CC[w,z]_{\calG_{2d-1,2d}}$ or $\CC[w,z]_{\calF_{2d-1,2d}}$ is an isomorphism onto $\CC[z]_{=2d}$.  
The elements of $\calF_{2d-1,2d}$ may be obtained from the elements of the basis $\{z_1^jz_2^{2d-j}\}_{j=0}^{2d}$ of $\CC[z]_{=2d}$ by computing normal forms in $\CC[w,z]_{\hat V}$ and reading off the monomials.  

Similar to the previous example, we have for $k>1$ that 
$$
\calF_{(k+1)d-1,(k+1)d} \ = \bigcup_{|\alpha|=k-1}\!\! w^{\alpha}\tilde\calF_{2d-1,2d},\quad 
\calG_{(k+1)d-1,(k+1)d} \  = \bigcup_{|\alpha|=k-1}\!\! w^{\alpha}\tilde\calG_{2d-1,2d}
\cup\{z_2^{(k+1)d}\}.
$$
where $\tilde\calG_{2d-1,2d}=\calG_{2d-1,2d}\setminus\{z_2^{2d}\}$.   Thus we can create a one-to-one mapping between all but one pair of monomials in $\calF_{(k+1)d-1,(k+1)d}$ and $\calG_{(k+1)d-1,(k+1)d}$ so that the difference in powers is bounded above by the maximum difference in powers between pairs of monomials in $\calF_{2d-1,2d}$ and $\calG_{2d-1,2d}$.  

Similar calculations as above hold for $r\in\{0,1,\ldots,d-1\}$.  

%Observe also that all but $4$ monomials are the same for each basis, i.e,  
%$$\begin{aligned}
%\calF_{2d-1,2d}\setminus\calG_{2d-1,2d} &=\{w_1^2,w_1w_2,w_2^2,w_1z_2^d\}, \\ 
%\calG_{2d-1,2d}\setminus\calF_{2d-1,2d}&=\{ w_2z_1^3z_2^{d-3}, w_2z_1^2z_2^{d-2},w_2z_1z_2^{d-1},z_2^{2d} \}.
%\end{aligned}$$ 
\end{example}

We have the following calculations (proof omitted). 

\begin{lemma}\label{lem:limC}
Let $m_n$ be the number of elements in $\calC_n$ and let $l_n=\sum_{\nu=1}^n \nu(m_{\nu}-m_{\nu-1})$.  If $d\geq 2$ then 
$$\begin{aligned}
m_n &= \sum_{\nu=0}^{nd} (\nu+1) = \frac{n^2d^2}{2}+o(n^2) , \\
l_n  &=\sum_{\nu=1}^n \nu( \nu d^2-\frac{d(d-3)}{2}) = \frac{n^3d^2}{3} + o(n^3)
\end{aligned}$$
as $n\to\infty$.   In particular, $m_n=o(l_n)$. 
\qed 
\end{lemma}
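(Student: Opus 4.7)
The plan is to verify the exact counts by direct enumeration from the inductive definition of $\calG_n$ and then extract the asymptotics by elementary algebra.

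The key combinatorial step is to show that $|\calG_{n-1,n}| = n+1$ for every $n \geq 1$, matching the count for $\calF_{n-1,n}$ from the preceding lemma. For $n \leq 2d-2$ this is immediate since $\calG_n = \calF_n$. For $n \geq 2d-1$, I would write $n = (d-1) + kd + r$ with $k \geq 1$ and $0 \leq r \leq d-1$ and count the monomials adjoined by the inductive rule: when $r = 0$ the rule adds $d(k+1)$ monomials and $d(k+1) = n+1$; when $r \neq 0$ it adds $d(k+1) + r$ monomials ($d(k+1)$ of the form $w^{\alpha}z_1^{d-1-j+r}z_2^j$ together with $r$ of the form $z_1^j z_2^{n-j}$), and $d(k+1) + r = n+1$. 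Summing then gives
$$m_n = |\calG_{dn}| = \sum_{\nu=0}^{dn}(\nu+1) = \frac{(dn+1)(dn+2)}{2},$$
from which $m_n = \frac{n^2 d^2}{2} + o(n^2)$ follows by expanding.

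Next I would compute the increment $m_\nu - m_{\nu-1}$ in closed form. The identity $(a+1)(a+2) - (b+1)(b+2) = (a-b)(a+b+3)$, applied with $a = d\nu$ and $b = d\nu - d$, gives
$$m_\nu - m_{\nu-1} = \frac{d(2d\nu - d + 3)}{2} = \nu d^2 - \frac{d(d-3)}{2},$$
which at once yields the exact identity $l_n = \sum_{\nu=1}^n \nu\bigl(\nu d^2 - \tfrac{d(d-3)}{2}\bigr)$ of the lemma. The asymptotic $l_n = \frac{n^3 d^2}{3} + o(n^3)$ then drops out of $\sum_{\nu=1}^n \nu^2 = \frac{n(n+1)(2n+1)}{6}$, and $m_n = o(l_n)$ is immediate from $n^2 = o(n^3)$.

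The main obstacle, such as it is, lies in the case analysis verifying $|\calG_{n-1,n}| = n+1$ without double-counting or omission in either case of $r$; once that bookkeeping is done, the rest is routine algebra.
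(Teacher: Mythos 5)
Your proof is correct, and since the paper states this lemma with the proof omitted there is nothing to compare it against; your route through the combinatorial identity $|\calG_{n-1,n}|=n+1$ (split on $r=0$ versus $r\neq 0$), followed by the telescoping closed form for $m_\nu-m_{\nu-1}$ via $(a+1)(a+2)-(b+1)(b+2)=(a-b)(a+b+3)$, is the natural and complete argument. The one point worth making explicit in a write-up is that the two sets adjoined in the $r\neq 0$ case are disjoint (the first has $|\alpha|=k\geq 1$, the second has $\alpha=0$) and that every adjoined monomial has weighted degree $d|\alpha|+|\beta|=n$, hence lies outside $\calG_{n-1}$, so the count $d(k+1)+r=n+1$ is exact.
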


Define $\Van_{\calC,n}(\zeta_1,\ldots,\zeta_n)$, $\Van_{\calC,n}(K)$ similar to (\ref{eqn:vandef}), (\ref{eqn:vanKdef}), using the basis $\calC$.  We need a lemma whose detailed proof is quite technical and tedious.  We sketch a proof and write out the details for a particular example.  The basic idea behind the proof is that the monomials in $\calB$ and $\calC$ are the same except for a few factors that become  negligible as $n\to\infty$.

\medskip

In the next lemma, recall the set $L$ defined in (\ref{eqn:Lgraph}).  
\begin{lemma}\label{lem:66}
We have $$\lim_{n\to\infty} (\Van_{\calC,m_n}(L))^{1/l_n} = \lim_{n\to\infty} (\Van_{\calB,m_n}(L))^{1/l_n}.$$
 (Recall that the right-hand side equals $d^{(2)}(L)$.)  
\end{lemma}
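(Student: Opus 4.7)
The plan is to exhibit $\calB_n$ and $\calC_n$ as two bases of the same subspace of $\CC[V]$, so that the corresponding Vandermonde determinants differ only by a change-of-basis factor whose growth is negligible compared to $l_n$. To this end, I would first verify that $\calB_n$ and $\calC_n$ span the same $m_n$-dimensional subspace of $\CC[V]$. Every monomial in $\calB_n \cup \calC_n$ has weighted degree at most $dn$ (assigning weight $d$ to $w_i$ and weight $1$ to $z_j$), so each, restricted to $V$, lies in $S_n := (f^*)^{-1}(\CC[z]_{\leq dn}) \subseteq \CC[V]$. By Lemma \ref{lem:limC}, $m_n = (dn+1)(dn+2)/2 = \dim \CC[z]_{\leq dn} = \dim S_n$. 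By Lemma \ref{lem:reg} together with the inductive extension noted in the text, $\calC_n$ is linearly independent of size $m_n$, as is $\calB_n$ by construction. Hence both are bases of $S_n$.

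Next, let $M_n$ denote the matrix whose rows express the elements of $\calC_n$ in terms of $\calB_n$ on $S_n$. By multilinearity of the determinant,
$$\Van_{\calC,m_n}(\zeta_1,\ldots,\zeta_{m_n}) = \det(M_n)\,\Van_{\calB,m_n}(\zeta_1,\ldots,\zeta_{m_n})$$
for any $\zeta_1,\ldots,\zeta_{m_n} \in V$. Taking suprema over $\zeta_j \in L$ and then $l_n$-th roots,
$$\Van_{\calC,m_n}(L)^{1/l_n} = |\det M_n|^{1/l_n}\cdot\Van_{\calB,m_n}(L)^{1/l_n}.$$
Since $\Van_{\calB,m_n}(L)^{1/l_n}$ converges to $d^{(2)}(L)$ by Theorem \ref{thm:zaharj}, the lemma reduces to proving $|\det M_n|^{1/l_n} \to 1$, i.e., $\log|\det M_n| = o(l_n)$.

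The technical core is estimating $|\det M_n|$ from both sides. The inductive descriptions in Examples \ref{ex:d=2ex} and \ref{ex:gend} show that the symmetric difference $\calB_n \triangle \calC_n$ contains only $O(n)$ monomials: apart from a bounded correction at each weighted degree, matched basis elements agree. After reordering, $M_n$ takes the form of a large identity block plus a small block $B_n$ of size $O(n) \times O(n)$ acting on the span of $\calB_n \triangle \calC_n$. The entries of $B_n$ arise from reducing pure monomials such as $z_2^m$ with $m \leq dn$ to normal form in $\CC[w,z]_V$ using $w_i = f_i(z)$; by induction on total degree these coefficients are bounded by $C^{dn}$ for a constant $C$ depending only on $f$. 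Hadamard's inequality then gives
$$|\det M_n| = |\det B_n| \leq \bigl(\sqrt{m_n}\,C^{dn}\bigr)^{O(n)} = e^{O(n^2)}.$$
Applying the same argument to $M_n^{-1}$, using the pairing of $\calC$- and $\calF$-monomials with uniformly bounded power differences from Example \ref{ex:gend}, yields $|\det M_n^{-1}| \leq e^{O(n^2)}$ as well. Since $l_n = \tfrac{1}{3}n^3 d^2 + o(n^3)$ by Lemma \ref{lem:limC}, this gives $\log|\det M_n|/l_n \to 0$ and hence $|\det M_n|^{1/l_n} \to 1$.

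The hard part is this last step: accounting for exactly which monomials differ at each level and controlling the entries of both $M_n$ and $M_n^{-1}$ uniformly. Regularity of $f$ (via Lemma \ref{lem:reg}) and the explicit matching of $\calG$- and $\calF$-monomials in Example \ref{ex:gend} are essential here, since they guarantee that the perturbation block $B_n$ is invertible with entries of comparable size in both directions; without this control, one side of the estimate would be lost and the limits could fail to coincide.
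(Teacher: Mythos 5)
Your overall framework is cleaner and more transparent than the paper's sketch: exhibiting $\calB_n$ and $\calC_n$ as two bases of the same subspace $S_n\subset\CC[V]$ and reducing the lemma to $\log|\det M_n| = o(l_n)$ is exactly the right conceptual reduction, and the identity $\Van_{\calC,m_n}(L) = |\det M_n|\,\Van_{\calB,m_n}(L)$ is an honest linear-algebra fact (the paper's row-by-row comparison, by contrast, conflates a Hadamard-type entrywise product of matrices with a rank-one scaling, which is not literally valid and relies on details the paper omits). However, your final estimate contains a genuine gap.

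The claim that $\calB_n\triangle\calC_n$ has only $O(n)$ elements is false. Look at Example \ref{ex:d=2} in the paper: for $d=2$, at weighted degree $2j$ one has
\[
\calF_{2j-1,2j}\setminus\calG_{2j-1,2j}=\{w^{\alpha}\colon|\alpha|=j\},\qquad
\calG_{2j-1,2j}\setminus\calF_{2j-1,2j}=\{w^{\alpha}z_1z_2\colon|\alpha|=j-1\}\cup\{z_2^{2j}\},
\]
so the symmetric difference at a single level already has $2j+2$ elements. Summed over $j=1,\dots,n$ this gives $\Theta(n^2)$ differing monomials, and the analogous count holds for general $d$ via Example \ref{ex:gend}. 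Your phrase ``a bounded correction at each weighted degree'' does not describe what actually happens: the correction at level $k$ grows linearly in $k$. Consequently the block $B_n$ is $\Theta(n^2)\times\Theta(n^2)$, and with entries bounded by $C^{dn}$ Hadamard only yields $\log|\det B_n| = O(n^2\log n) + O(n^2)\cdot O(n) = O(n^3)$, which is the \emph{same} order as $l_n=\tfrac13 d^2 n^3+o(n^3)$. The bound therefore does not establish $\log|\det M_n|=o(l_n)$, and the proof does not close. To repair it you would need to separate the rows of $B_n$ by the size of their entries: the vast majority of the $\Theta(n^2)$ differing monomials are matched (as in Examples \ref{ex:d=2ex}, \ref{ex:gend}) to monomials of uniformly bounded power difference, for which the corresponding coefficients in $M_n$ stay $O(1)$, while only the $O(n)$ rows coming from the pure-$z$ monomials $z_2^{jd}$ (one at each level) involve deep substitution and can carry coefficients of size $C^{O(n)}$. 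With that refinement Hadamard gives $\log|\det B_n|=O(n^2\log n)=o(l_n)$, which is in fact the same bookkeeping the paper performs (``fewer than $m_n=o(l_n)$ rows'' with $O(1)$ factors, plus ``$O((d-1)n)$ rows'' with $O(n)$ factors). The same care is needed for $M_n^{-1}$. As written, though, both the size claim and the uniform-entry estimate are wrong, and the argument as stated does not prove the lemma.
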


\begin{proof}[Sketch of Proof]  %The basic idea is that the monomials 
As before, suppose 
$$n=(d-1) +kd+r, \hbox{ where } k\in\NN   \hbox{ and } r\in\{0,\ldots,d-1\}.$$ 
For each $j=2d-2,\ldots,n$, all but at most $d-1$ elements of 
$\calG_{j-1,j}$ have total degree in $z$  uniformly bounded above by $2d$.   (For $\calG_{n-1,n}$ these are the last $r$ elements.)  All elements of $\calF_{j-1,j}$ have total degree in $z$ uniformly bounded above by $d-1$.  

Consider replacing rows of the Vandermonde matrix for $\Van_{\calC}(\zeta_1,\ldots,\zeta_{m_n})$ with rows of the matrix for $\Van_{\calB}(\zeta_1,\ldots,\zeta_{m_n})$.  If a row of the form $[w^{\alpha}z^{\beta}(\zeta_1) \cdots w^{\alpha}z^{\beta}(\zeta_{m_n})]$ is replaced by one of the form $[w^{\tilde\alpha}z^{\tilde\beta}(\zeta_1)\cdots w^{\tilde\alpha}z^{\tilde\beta}(\zeta_{m_n})]$, this changes the determinant by a factor of $\prod_{j=1}^{m_n}w^{\tilde\alpha-\alpha}z^{\tilde\beta-\beta}(\zeta_j)$.  

We may compare, one by one, rows of $\Van_{\calC}$ corresponding to  monomials  $w^{\alpha}z^{\beta}\in\calG_{j-1,j}$ 
with rows of $\Van_{\calB}$ corresponding to monomials $w^{\tilde\alpha}z^{\tilde\beta}\in\calF_{j-1,j}$ for which the differences $\tilde\alpha-\alpha$ and $\tilde\beta-\beta$ are uniformly bounded, as in Examples \ref{ex:d=2ex} and \ref{ex:gend} .  We do this for all $j=2d-2,\ldots,n$.  If $L$ avoids the coordinate axes in $z$ and $w$ one can find constants $c,C$ such that 
\begin{equation}\label{eqn:diff}
c \leq  |w^{\tilde\alpha-\alpha}z^{\tilde\beta-\beta}(\zeta)| \leq C \end{equation}
whenever $\zeta\in L$.\footnote{The condition on the coordinate axes may be removed by doing further analysis at the end---apply the result to $L\setminus\{(w,z)\colon |z|,|w|<\epsilon\}$ and then let $\epsilon\to 0$.}

We bound the ratio $|\Van_{\calC}(\zeta_1,\ldots,\zeta_{m_n})|/|\Van_{\calB}(\zeta_1,\ldots,\zeta_{m_n})|$ using powers of $c$ and $C$ by applying (\ref{eqn:diff}) to each row that we compare.    This estimate works for all rows except those which contain no powers of $w$; altogether, fewer than  $m_n=o(l_n)$ rows. 

 The remaining rows of $\calG_{n}$ contain no powers of $w$; for each $j=2d-2,\ldots,n$ this is at most $d-1$ rows of $\calG_{j-1,j}$; so in total, $O((d-1)n)$ rows.  For each of these rows we can use an estimate of the form $c^{O(n)},C^{O(n)}$ (perhaps replacing $c,C$ as appropriate).  The sum of all the exponents is $O((d-1)n^2)= o(l_n)$ by Lemma \ref{lem:limC}.  

Putting the estimates in the previous two paragraphs together, we obtain 
$$c^{o(l_n)/l_n}\leq \left(\frac{|\Van_{\calC}(\zeta_1,\ldots,\zeta_{m_n})|}{|\Van_{\calB}(\zeta_1,\ldots,\zeta_{m_n})|}\right)^{1/l_n}\leq C^{o(l_n)/l_n}$$
and the upper and lower bounds go to $1$  as $n\to\infty$.  The result follows.
\end{proof}

\begin{example}\label{ex:d=2}\rm 
To illustrate  a specific case, suppose $d=2$ as in Example \ref{ex:d=2ex}.  %Here $$m_n=\sum_{j=1}^{2n+1}j=o(l_n).$$ 
When $n\in\NN$, 
$$\begin{aligned}
\calF_{2n-1,2n} &= \bigcup_{|\alpha|=n-1} w^{\alpha}\calF_{3,4} =    \{ w^{\alpha}\colon |\alpha|=n\}\cup\{w^{\alpha}z_1^2\colon |\alpha|=n-1\}, \\
\calG_{2n-1,2n} &= \bigcup_{|\alpha|=n-1}w^{\alpha}\tilde\calG_{3,4} \cup\{z_2^{2n}\} 
 \\ & =        \{ w^{\alpha}z_1^2\colon |\alpha|=n-1\}\cup \{w^{\alpha}z_1z_2\colon |\alpha|=n-1 \}  \cup\{z_2^{2n}\}.  
\end{aligned}$$
We compare monomials in 
$$
\calF_{2n-1,2n}\setminus\calG_{2n-1,2n} = \{w^{\alpha}\colon |\alpha|=n\} = \{w^{\alpha}w_1\colon |\alpha|=n-1\}\cup\{w_2^n\}
$$
to those in 
$$\calG_{2n-1,2n}\setminus\calF_{2n-1,2n} = \{w^{\alpha}z_1z_2\colon|\alpha|=n-1\}\cup\{z_2^{2n}\}.$$
We have 
$$
c \leq  \left|\frac{w_1(\zeta)}{z_1z_2(\zeta)}\right| \leq C \quad \hbox{for all }\zeta\in K 
$$
where $c>0$ is smaller than the minimum and $C$ is larger than the maximum of $|w_1|/|z_1z_2|$ on $K$.  We use this estimate to compare the  row of $\Van_{\calB}$ corresponding to $w^{\alpha}w_1$ with that of $\Van_{\calC}$ corresponding to $w^{\alpha}z_1z_2$, for each $\alpha$ with $|\alpha|=n-1$.   The only other row to compare is that of $w_2^{n}\in\calF_{2n-1,2n}$ with that of $z_2^{2n}\in\calG_{2n-1,2n}$.  We can use the estimate $c^{n}\leq |w_2^n/z_2^{2n}|\leq C^n$ here if we choose 
$$0<c\leq\min\{|w_2|/|z_2^2|,  |w_1|/|z_1z_2| \},\  C\geq\max\{|w_2|/|z_2^2|,  |w_1|/|z_1z_2| \}.$$  
Altogether, we obtain $2n$ powers of $c,C$ from rows corresponding to monomials in $\calF_{2n-1,2n}$ and $\calG_{2n-1,2n}$. 

As shown in Example \ref{ex:d=2ex},  $\calF_{2n,2n+1}=\calG_{2n,2n+1}$ and no estimates are needed.  Putting everything  together, let $k$ be a large integer; then $k\in\{2n,2n+1\}$ for some $n\in\NN$ and   
$$
c^{\nu_k}  \leq    \left(\frac{|\Van_{\calC}(\zeta_1,\ldots,\zeta_{m_k})|}{|\Van_{\calB}(\zeta_1,\ldots,\zeta_{m_k})|}\right)  \leq C^{\nu_k}
$$
where $$\nu_k=\sum_{j=1}^n 2j < \sum_{j=1}^{k}2j< 2m_k=o(l_k).$$  So the $l_k$-th roots of these quantities go to 1 as $k\to\infty$. 
\end{example}

We will need the following elementary lemma which follows immediately from properties of determinants under row operations.

\begin{lemma}\label{lem:hatf}
Let $t\in\NN$. Consider polynomials $q_j(z)= f_j(z) + r_j(z)$ for each $j=1\ldots,t$.  Let $s>t$ and consider square matrices given by evaluating polynomials at a set of points $\{\zeta_1,\ldots,\zeta_s\}$,  
$$
F = \left[\begin{array}{ccc}
R(\zeta_1) & \cdots & R(\zeta_s) \\ \hline
f_1(\zeta_1)  & \cdots & f_1(\zeta_s)  \\
f_2(\zeta_2)  & \cdots & f_2(\zeta_s)  \\
\vdots & \ddots & \vdots \\
f_t(\zeta_1) & \cdots &  f_t(\zeta_s) 
\end{array}\right],\  
G = \left[\begin{array}{ccc}
R(\zeta_1) & \cdots & R(\zeta_s) \\ \hline
 q_1(\zeta_1) & \cdots &  q_1(\zeta_s) \\
 q_2(\zeta_1) & \cdots &  q_2(\zeta_s) \\
\vdots & \ddots & \vdots \\
 q_t(\zeta_1) & \cdots &   q_t(\zeta_s)
\end{array}\right],
$$
where $R(z)$ is a vector of $s-t$ monomials.  If every monomial of $r_j(z)$ is an entry of $R(z)$ (for all $j$) then $\det F=\det G$.\qed
\end{lemma}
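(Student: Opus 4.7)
The plan is to use elementary row operations on $G$ to transform it into $F$ without changing the determinant. The hypothesis that every monomial appearing in any $r_j(z)$ is one of the entries of the vector $R(z)$ is exactly what is needed for this reduction.

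More concretely, I would first expand each $r_j$ in the monomial basis coming from the entries of $R(z)$: write $r_j(z)=\sum_{\mu} c_{j,\mu}\, m_\mu(z)$, where $\{m_\mu\}$ denotes the collection of monomials listed in $R(z)$ and $c_{j,\mu}\in\CC$. Evaluating at the points $\zeta_1,\ldots,\zeta_s$ gives, for each $j$, the row identity
\[
[\,q_j(\zeta_1),\ldots,q_j(\zeta_s)\,] \;=\; [\,f_j(\zeta_1),\ldots,f_j(\zeta_s)\,] \;+\; \sum_{\mu} c_{j,\mu}\,[\,m_\mu(\zeta_1),\ldots,m_\mu(\zeta_s)\,],
\]
in which every row $[m_\mu(\zeta_1),\ldots,m_\mu(\zeta_s)]$ on the right is one of the top $s-t$ rows of $G$ (and of $F$).

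The second step is to apply, for each $j\in\{1,\ldots,t\}$, the elementary row operation that subtracts $\sum_\mu c_{j,\mu}$ times the corresponding $m_\mu$-row from the $q_j$-row of $G$. After performing all $t$ such row operations, the bottom block of $G$ has been replaced by the bottom block of $F$, while the top $R$-block is unchanged. Since each of these operations adds a multiple of one row to another, the determinant is preserved, giving $\det G = \det F$.

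There is no real obstacle here; the content of the lemma is exactly the observation that the ``$r_j$-part'' of $q_j$ lies in the row span of the $R$-block, so it can be cleared by row reduction. The only care needed is bookkeeping to ensure that the row operations are performed on rows outside the $R$-block (so they do not interfere with each other), which is automatic since all the $q_j$-rows are distinct from the $R$-rows.
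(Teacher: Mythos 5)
Your proof is correct, and it is exactly the row-operation argument the paper has in mind (the paper omits the details, noting only that the lemma "follows immediately from properties of determinants under row operations"). Subtracting, for each $j$, the appropriate linear combination of the $R$-rows from the $q_j$-row turns $G$ into $F$ without changing the determinant, which is precisely what you carry out.
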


In what follows let $\calA:=\{z^{\alpha}\}$ denote the monomial basis of $\CC[z]$ under a graded ordering, so that in the  proposition below, the rows of the Vandermonde matrix for $\Van_{\calA}(\zeta_1,\ldots,\zeta_{m_n})$  are given by monomials in $\CC[z]_{\leq dn}$, for each $n\in\NN$.  (For convenience, local computations will use grevlex with $z_1\prec z_2$, but reordering monomials will not affect the result.)

\begin{proposition}\label{prop:613}
Fix an integer $n>d-1$ and a set of points  $\{\zeta_1,\ldots,\zeta_{m_n}\}$.  Then 
$$|\Van_{\calC}(\zeta_1,\ldots,\zeta_{m_n})| 
=  O(1)^{o(n^3)}   |\Res(\hat f)|^{\frac{dn^3}{6}+o(n^3)} | \Van_{\calA}(\zeta_1,\ldots,\zeta_{m_n})|$$
\end{proposition}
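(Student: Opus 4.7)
The plan is to write $\Van_{\calC}(\zeta_1,\ldots,\zeta_{m_n})=M\cdot\Van_{\calA}(\zeta_1,\ldots,\zeta_{m_n})$, where $M$ is the $m_n\times m_n$ change-of-basis matrix whose entry in row $w^\alpha z^\beta\in\calC_n$ and column $z^\gamma\in\calA$ is the coefficient of $z^\gamma$ in $f^*(w^\alpha z^\beta)=f(z)^\alpha z^\beta$.  Since $|\Van_{\calC}|=|\det M|\cdot|\Van_{\calA}|$, the task reduces to showing $|\det M|=O(1)^{o(n^3)}\,|\Res(\hat f)|^{dn^3/6+o(n^3)}$.  First I would replace $f$ by $\hat f$: writing $f(z)^\alpha z^\beta=\hat f(z)^\alpha z^\beta+r_{\alpha,\beta}(z)$ with $r_{\alpha,\beta}$ of total degree strictly less than $d|\alpha|+|\beta|\leq dn$, every monomial of $r_{\alpha,\beta}$ is already among the columns of $\Van_{\calA}$, so Lemma \ref{lem:hatf} applied row by row yields $|\det M|=|\det\hat M|$, where $\hat M$ is the analogous matrix built from $\hat f$.

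I would then exploit the homogeneity of $\hat f(z)^\alpha z^\beta$, which has degree exactly $\ell:=d|\alpha|+|\beta|$: the row of $\hat M$ indexed by $w^\alpha z^\beta$ has nonzero entries only in columns $z^\gamma$ with $|\gamma|=\ell$.  Grouping rows and columns by $\ell$ puts $\hat M$ into block-diagonal form $\hat M=\bigoplus_\ell D_\ell$, with each $D_\ell$ a square matrix of size $\ell+1$.  The finitely many levels $\ell<2d-1$ together contribute an overall $O(1)$ factor to $|\det M|$ that depends only on the coefficients of $\hat f$.  For $\ell\geq 2d-1$, write $\ell=(d-1)+kd+r$ with $k\geq 1$ and $0\leq r\leq d-1$: the $r$ pure-$z$ rows of $\calG_{\ell-1,\ell}$ form an identity submatrix occupying the columns $\{z_1^j z_2^{\ell-j}:j<r\}$, so $|\det D_\ell|=|\det B_\ell|$, where $B_\ell$ is the $d(k+1)\times d(k+1)$ matrix whose rows are the coefficient vectors of the polynomials $\hat f(z)^\alpha z_1^{d-1-j+r}z_2^j$ (for $|\alpha|=k$, $j=0,\ldots,d-1$) expanded in the remaining columns.

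The main obstacle is to identify $|\det B_\ell|=|c_\ell|\cdot|\Res(\hat f)|^{k(k+1)/2}$ with $|c_\ell|$ bounded.  The base case $k=1$ is exactly the Sylvester matrix defining $\Res(\hat f)$, so $|\det B_\ell|=|\Res(\hat f)|$.  In general, viewing $\det B_\ell$ as a polynomial in the coefficients of $\hat f_1,\hat f_2$, each row contributes degree $|\alpha|=k$, so $\det B_\ell$ has total degree $d(k+1)k$, matching the degree of $\Res(\hat f)^{k(k+1)/2}$ since $\Res(\hat f)$ is bihomogeneous of bidegree $(d,d)$.  For divisibility I would argue that $\det B_\ell$ vanishes on the irreducible hypersurface $\{\Res(\hat f)=0\}$: there $\hat f_1,\hat f_2$ share a common linear factor $L$, and the Koszul syzygy $g_2\hat f_1-g_1\hat f_2=0$ (where $\hat f_i=Lg_i$), multiplied by suitable monomials and reorganized using the structure of $\calG_{\ell-1,\ell}$, descends to a nontrivial linear dependency among the rows of $B_\ell$.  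Hence $\Res(\hat f)\mid\det B_\ell$ in the polynomial ring of coefficients, and the degree match pins the exponent to exactly $k(k+1)/2$; this is essentially Macaulay's resultant formula in the bivariate homogeneous case, and can also be verified inductively by lifting from the Sylvester case through multiplications by $\hat f_1,\hat f_2$.

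Finally I would combine contributions: $|\det M|=O(1)\cdot\bigl(\prod_{\ell\geq 2d-1}|c_\ell|\bigr)\cdot|\Res(\hat f)|^{\sum_{\ell\geq 2d-1}k(k+1)/2}$.  For each $k\in\{1,\ldots,n-1\}$ there are (up to $O(1)$ boundary corrections near $\ell=dn$) $d$ admissible values of $r$, so the exponent sum equals $d\sum_{k=1}^{n-1}k(k+1)/2+O(n^2)=\frac{dn^3}{6}+o(n^3)$.  With $O(n)$ nontrivial blocks, each contributing a bounded scalar $|c_\ell|$, the product $\prod_{\ell\geq 2d-1}|c_\ell|$ is $O(1)^{O(n)}\subseteq O(1)^{o(n^3)}$, yielding the claimed formula.
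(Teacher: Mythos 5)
Your reorganization of the argument is cleaner than the paper's in two respects: you apply Lemma \ref{lem:hatf} row by row at the outset to reduce to the homogeneous map $\hat f$, rather than handling the lower-order terms at the end, and the resulting block-diagonal structure $\hat M=\bigoplus_\ell D_\ell$ (with the pure-$z$ rows peeling off an identity block so that $|\det D_\ell|=|\det B_\ell|$) is a transparent way to localize the computation degree by degree. The paper instead performs the change of variables column-entry by column-entry and introduces the block pairing $\bw_0,\ldots,\bw_\ell$ only after the fact; your framing makes the bookkeeping more visible.

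However, the central step has a genuine gap as written. You show that $\det B_\ell$ vanishes on the irreducible hypersurface $\{\Res(\hat f)=0\}$ via the Koszul syzygy, which gives $\Res(\hat f)\mid\det B_\ell$. You then claim ``the degree match pins the exponent to exactly $k(k+1)/2$.'' But divisibility by $\Res$ \emph{once} together with a degree count does not force $\det B_\ell$ to be a constant times a power of $\Res$: you could a priori have $\det B_\ell=\Res\cdot Q$ with $Q$ an unrelated polynomial of the complementary degree. To make the degree argument close, you also need the containment $\{\det B_\ell=0\}\subseteq\{\Res(\hat f)=0\}$, i.e., that the rows of $B_\ell$ are linearly independent whenever $\hat f$ is regular. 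This is precisely the content of Lemma \ref{lem:reg} for $\ell=2d-1$, extended inductively (the paper states this extension but omits the details). Once you have both containments, $\Res$ is the \emph{only} irreducible factor of $\det B_\ell$, so $\det B_\ell=c_\ell\Res^m$ and the degree count then pins $m=k(k+1)/2$. Alternatively, the inductive lifting you mention in passing --- factoring $B_\ell$ through repeated multiplication by Sylvester-type blocks, which is essentially the paper's chain $\bR_k=R_{\ell(\ell+1)/2}\cdots R_1$ --- produces the power directly without the abstract argument; but that route needs to be carried out, not merely cited. As presented, the proposal has identified the right structure and the right answer, but neither of the two justifications offered for $|\det B_\ell|=|c_\ell|\,|\Res(\hat f)|^{k(k+1)/2}$ is complete.
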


\begin{proof}
We study the transformation of Vandermonde matrix columns.  Consider the $j$-th column of the matrix for $\Van_{\calC}(\zeta_1,\ldots,\zeta_{m_n})$; its entries  are of the form $w^{\alpha}z^{\beta}(\zeta_j)$.  We transform these entries to linear combinations of expressions of the form $z^{\gamma}(\zeta_j)$, where $|\gamma|\leq dn$, in a step by step process.  In the calculations that follow we assume that 
\begin{equation}\label{eqn:assumeh} w_1=f_1(z)=\hat f_1(z),\quad w_2=f_2(z)=\hat f_2(z).\end{equation}

Let $d-1< k\leq dn$ and consider the entries corresponding to monomials in $\calG_{k,k-1}$.  Write \begin{equation}\label{eqn:kdlr}  k =(d-1)+\ell d+r\end{equation} 
where $\ell\in\{0,1,\ldots,n-1   \}$ and $r\in\{0,\ldots,d-1\}$.  
Write
$$
\bW_k= \left[\begin{array}{c}
  \bw_0(\zeta_j)\\ \cdots \\ \bw_{\ell}(\zeta_j)
\end{array}\right],
\quad\hbox{where }
 \bw_{s}(\zeta_j) = \left[\begin{array}{c} 
  w_1^{\ell-s}w_2^sz_1^{r+d-1}(\zeta_j)\\ w_1^{\ell-s}w_2^sz_1^{r+d-2}z_2(\zeta_j) \\ \cdots \\
w_1^{\ell-s}w_2^{s}z_1^{r}z_2^{d-1}(\zeta_j)   
\end{array}\right]   \hbox{ for } s\in\{0,\ldots,\ell\}.
$$
%Here the entries denoted by $\star$ are given by the $r$ monomials that are powers of $z$ only.

For convenience in what follows, we suppress the dependence on $\zeta_j$.  By using (\ref{eqn:assumeh}) and (\ref{eqn:fhat}) we may transform a pair of factors $w_1,w_2$ in the monomials contained in $\bw_0,\bw_1$, respectively, as follows:
$$\begin{aligned}
\left[\begin{array}{c}\bw_0\\ \hline \bw_1
\end{array}\right]
=\left[\begin{array}{c}
w_1^{\ell}z_1^{r+d-1}\\ 
\cdots \\
w_1^{\ell}z_1^rz_2^{d-1}\\ \hline 
w_1^{\ell-1}w_2z_1^{r+d-1} \\ 
\cdots \\ w_1^{\ell-1}w_2z_1^{r}z_2^{d-1}
\end{array}\right]
&=\left[\begin{array}{c}
w_1\cdot w_1^{\ell-1}z_1^{r+d-1}\\ \cdots \\
w_1\cdot w_1^{\ell-1}z_1^rz_2^{d-1}\\ \hline
w_2\cdot w_1^{\ell-1}z_1^{r+d-1}\\ \cdots \\
w_2\cdot w_1^{\ell-1}z_1^rz_2^{d-1}
\end{array}\right] \\
&= \left[\begin{array}{c}
(a_dz_1^d+ \cdots +a_0z_2^d) w_1^{\ell-1}z_1^{r+d-1}  \\ \cdots\cdots \\
(a_dz_1^d+ \cdots +a_0z_2^d) w_1^{\ell-1}z_1^rz_2^{d-1} \\ \hline
(b_dz_1^d+ \cdots +b_0z_2^d) w_1^{\ell-1}z_1^{r+d-1}   \\ \cdots\cdots \\
(b_dz_1^d+ \cdots +b_0z_2^d) w_1^{\ell-1}z_1^rz_2^{d-1} 
\end{array}\right] \\
&=\left[\begin{array}{ccccc}
a_d  & \cdots   & a_0 & \  &  \ \\
\  &  \ddots & \      & \ddots &\   \\
\ & \ &  a_d  & \cdots & a_0    \\   \hline 
b_d  & \cdots  &b_0 & \  &  \  \\
 \  & \ddots &  \  & \ddots & \   \\
 \ & \ & b_d  & \cdots & b_0  
\end{array}\right] 
\left[\begin{array}{c}
w_1^{\ell-1}z_1^{r+2d-1} \\ \cdots \\
w_1^{\ell-1}z_1^{r+d}z_2^{d-1} \\ \hline
w_1^{\ell-1}z_1^{r+d-1}z_2^d \\ \cdots \\
w_1^{\ell-1}z_1^rz_2^{r+2d-1} 
\end{array}\right]  \\
&=:  R\left[\begin{array}{c}
\bw_{0}[w_1] \\ \hline \bw_{1}[w_2] 
\end{array}\right]  .
\end{aligned}$$
  Adjoining $\bw_2,\ldots,\bw_{\ell}$ to this calculation, we obtain
$$
\left[\begin{array}{c} \bw_0\\ \bw_1\\ \hline 
\bw_2\\  \cdots\\  \bw_{\ell}  \end{array}\right]  
= \left[\begin{array}{c|c}
R & \b0 \\ \hline
\b0 & I
\end{array}\right] \left[\begin{array}{c} \bw_0[w_1]\\ \bw_1[w_2]\\ \hline 
\bw_2\\ \cdots\\  \bw_{\ell}  \end{array}\right]   =: R_1 \left[\begin{array}{c} \bw_0[w_1]\\ \bw_1[w_2]\\ \hline 
\bw_2\\ \cdots\\  \bw_{\ell}  \end{array}\right] .
$$
Observe that $R$ is the matrix of $\Res(\hat f)$ and $\det(R_1)=\det(R)= \Res(\hat f)$.

We may transform, by the same method, another pair of factors $w_1,w_2$ in a pair of respective blocks to monomials in $z_1,z_2$ of degree $d$; for example, transform $w_1$ in the block $\bw_2$ and $w_2$ in the block $\bw_3$.  A similar calculation as above yields 
$$
\left[\begin{array}{c} \bw_0[w_1]\\ \bw_1[w_2] \\  
\bw_2 \\ \bw_3\\ \cdots\\  \bw_{\ell}  \end{array}\right]  
=  R_2 \left[\begin{array}{c} \bw_0[w_1]\\ \bw_1[w_2]\\  
\bw_2[w_1]\\   \bw_3[w_2] \\ \cdots\\  \bw_{\ell}  \end{array}\right] 
$$
where $\det(R_2)=\Res(\hat f)$ also. % Here, for example, monomials in $\bw_1[w_1w_2]$ are of degree $\ell-2$ in $w$ and of degree $r+3d-1$ in $z$; a factor of $w_1$ in each monomial of $\bw_1[w_2]$ becomes a factor of $z_1^d$ in $\bw_1[w_1w_2]$.  In the other block, a factor of $w_2$ becomes a factor of $z_2^d$.  

Suppose in what follows that $\ell$ is odd.  Then there are an even number of blocks.    By pairing up blocks we can eliminate one factor each of $w_1$ and $w_2$ in $(\ell+1)/2$ steps; overall, the total degree in $w$ of each monomial is decreased by 1.   We continue transforming powers of $w$ to powers of $z$ in (disjoint) block pairs.  The transformation matrices $R_2,R_3,\ldots,$ etc.  satisfy $|\det(R_j)|=|\Res(\hat f)|$.  All powers of $w$ are eliminated after doing these $(\ell+1)/2$ steps $\ell$ times (to reduce the total degree in $w$ from $\ell$ to $0$).   The final expression on the right-hand side is 
$$
R_{\ell(\ell+1)/2}\begin{bmatrix}
\bw_0[w_1^{\ell}] \\ \bw_1[w_1^{\ell-1}w_2]\\ \cdots \\  \bw_{\ell}[w_2^{\ell}]  
\end{bmatrix} 
= R_{\ell(\ell+1)/2} \begin{bmatrix} z_1^{k}\\ z_1^{k-1}z_2\\ \cdots\\ z_1^{r+d-1}z_2^{k-r-d+1}
\end{bmatrix} 
$$
with $\det R_{\ell(\ell+1)/2}=\Res(\hat f)$.  Define the product $\bR_k:=R_{\ell(\ell+1)/2}\cdots R_2R_1$; then
$$
\begin{bmatrix}  w_1^{\ell}z_1^{r+d-1}\\ \cdots \\ w_2^{\ell}z_2^{r+d-1}\end{bmatrix}
= \bR_k \begin{bmatrix}
z_1^{k} \\ \cdots \\ z_1^{r+d-1}z_2^{k-r-d+1}
\end{bmatrix} 
$$
with \begin{equation*} |\det\bR_k|=\left|\Res(\hat f)\right|^{\ell(\ell+1)/2} =  \left|\Res(\hat f)\right|^{\frac{\ell^2}{2}+o(\ell^2)}.\end{equation*}%= \left|\Res(\hat f)\right|^{\frac{k^2}{2d^2} +o(k^2)}$$
%since $\ell = \frac{k}{d}+O(1)$.  

%When $\ell$ is even, instead of pairs alone, we use a combination of pairs and triples; for the latter, we transform $w_1^2,w_1w_2,w_2^2$ to monomials in $z$ of degree $2d$ in three blocks.  At the end, the same power of the resultant is obtained as above.  Details for the case $d=2$ are given in the example which follows.  

When $\ell$ is even, we first modify the basis monomials so that monomials containing $w$ have degree $\ell-1$.\footnote{A similar argument to the proof of Lemma \ref{lem:reg} (i.e. substituting $w=f(z)$) may be used to show that the new collection is linearly independent.}  We account for this modification with estimates as in (\ref{eqn:diff}).  We then do a similar calculation as in the odd case.  Altogether, we obtain a change of variable matrix $\bR_k$ with the property that
\begin{equation} \label{eqn:Respower}
|\det\bR_k| = O(1)^{o(\ell)}\left|\Res(\hat f)\right|^{\frac{\ell^2}{2}+o(\ell^2)}.
\end{equation}
More details are given in an example which follows the proof.

The above procedure is carried out on monomials in $\calG_{k,k-1}$ for all $k\in\{d,\ldots,dn\}$ to eliminate powers of $w$.  We obtain for the full Vandermonde matrix
\begin{equation} \label{eqn:WZ}
\bW  =  \left[\begin{array}{c}  
* \\ \hline  \bW_d \\ \cdots \\   \bW_{nd}
\end{array}\right] 
=\left[\begin{array}{c|ccc}
I & \ & \  &  \  \\  \hline 
\ & \bR_d & \  & \  \\
\  &\  & \ddots & \  \\
\  &\  &\  & \bR_{nd} 
\end{array}\right] \bZ  =: \bR\bZ ,
\end{equation}
where the entries of $\bZ$ are $z^{\alpha}(\zeta_j)$ for $|\alpha|\leq dn$.  For convenience, the monomials that give the components of $\bW$ have been reordered so that all monomials containing only powers of $z$ have been transferred to the top region (denoted by $*$).\footnote{This  reordering will have no effect since we are only interested in the absolute value of the determinant.}  

We now compute $|\det\bR| = \prod_{j=d}^{nd} |\det\bR_j|$.  For each $\ell=1,\ldots,n-1$ we have 
$$|\det\bR_{(\ell+1)d-1}|=|\det\bR_{(\ell+1)d}|=\cdots=|\det\bR_{(\ell+1)d+(d-2)}|=O(1)^{o(\ell)}|\Res(\hat f)|^{\frac{\ell^2}{2}+o(\ell^2)}. $$
The computation of each of the above $\bR_j$s involves the same calculation, using the same value of $\ell$, with $r=0,\ldots,d-1$.  This accounts for all but a few factors $|\det \bR_j|$ whose number stays bounded as $n\to\infty$.   Hence 
\begin{equation}\label{eqn:Sn1} |\det\bR|=O(1)^{o(S_n)}|\Res(\hat f)|^{S_n}\end{equation} where
\begin{equation}\label{eqn:Sn}
S_n = d\left(\sum_{\ell=1}^{n-1}\frac{\ell^2}{2} + o(\ell^2) \right) + O(1)  = d\left(\frac{n^3}{6} + o(n^3)\right). 
\end{equation}
Finally, apply (\ref{eqn:WZ}) to  all columns of the Vandermonde matrix. We obtain
$$
[\bW(\zeta_1) \cdots \bW(\zeta_{m_n})] = \bR[\bZ(\zeta_1)\cdots\bZ(\zeta_{m_n})] 
$$  
(using the points $\zeta_j$ to distinguish the columns).  
Now take the absolute value of the determinant on both sides and apply (\ref{eqn:Sn1}) and   (\ref{eqn:Sn}).  The result follows as long as (\ref{eqn:assumeh}) holds. 

In general, $f$ may consist of $\hat f$ plus lower order terms.  By calculations similar to the above, one can show that 
$$
[\bW(\zeta_j) + \lot\cdots \bW(\zeta_{m_n})+\lot ] =\bR[\bZ(\zeta_1)\cdots\bZ(\zeta_{m_n})].
$$
Given a row of the matrix on left-hand side whose leading term corresponds to some monomial, the monomials in $\lot$ correspond to rows further up the matrix.  We can  reduce the calculations in this case to those of the previous one by applying Lemma \ref{lem:hatf} and an inductive argument.
\end{proof}

\begin{example}\rm
We illustrate how powers of the resultant appear as a result of the change of variable calculation when $d=2$.  For simplicity, we consider when $f$ is homogeneous, i.e., 
$$\begin{aligned}
w_1&= f_1(z)=\hat f_1(z)=a_2z_1^2+a_1z_1z_2+a_0z_2^2,\\
w_2&=f_2(z)=\hat f_2(z)=b_2z_1^2+b_1z_1z_2+b_0z_2^2.
\end{aligned}$$
We compute the details of a case in which $\ell$ is even, which needs a first step to modify it into a similar form as the odd case.     Suppose $k=9$, so that $\ell=4$ and $r=0$ in (\ref{eqn:kdlr}).  The 10 rows of the Vandermonde determinant for $k=9$ correspond to 
$$\calG_{8,9}=\{w_1^4z_1, w_1^4z_2,  w_1^3w_2z_1, w_1^3w_2z_2, w_1^2w_2^2z_1, w_1^2w_2^2z_2, w_1w_2^3z_1, w_1w_2^3z_2, w_2^4z_1, w_2^4z_2      \}.$$
For the first step, we replace the above list of monomials by
$$
\mathcal{H} := \{ w_1^3z_1^3, w_1^3z_1^2z_2, w_1^2w_2z_1^3, w_1^2w_2z_1^2z_2,  w_1w_2^2z_1^3, w_1w_2^2z_1^2z_2, w_2^3z_1^3, w_2^3z_1^2z_2, z_1z_2^8,z_2^9\}.
$$
The first 8 monomials in $\mathcal{H}$ are off by a factor of  either $w_1/z_1^2$ or $w_2/z_1^2$ compared to those in $\calG_{8,9}$, and the last two monomials are off by a factor of $(w_2/z_2^2)^4$ each.  Using the maximum $C$ and minimum $c$ of the values on $K$ of the functions in the set $\{|w_1|/|z_1|^2,|w_2|/|z_1|^2,|w_2|/|z_2|^2\}$, the Vandermonde determinant with $\mathcal{H}$ replacing $\calG_{8,9}$ is off by a factor   bounded by $c^{16}$  from below and by  $C^{16}$ from above (cf. Example \ref{ex:d=2}).

We now carry out the change of variables from $w$ to $z$ for the first 8 monomials of 
$\mathcal{H}$, similar to the proof.  The first two steps yield
$$
\begin{bmatrix}
w_1^3z_1^3\\ w_1^3z_1^2z_2\\ w_1^2w_2z_1^3\\ w_1^2w_2z_1^2z_2\\   w_1w_2^2z_1^3\\  w_1w_2^2z_1^2z_2\\  w_2^3z_1^3\\  w_2^3z_1^2z_2 
\end{bmatrix}
=\begin{bmatrix}
a_2 & a_1 & a_0 &  & & & &  \\
& a_2 & a_1 & a_0 &  & & & \\
b_2 & b_1 & b_0 &  & & & &  \\
& b_2 & b_1 & b_0 &  & & & \\
 & & & &a_2 &a_1 &a_0 &  \\
 & & & & &a_2 &a_1 &a_0  \\
& & & &b_2 &b_1 &b_0 &  \\
 & & & & &b_2 &b_1 &b_0  
\end{bmatrix} \begin{bmatrix}
w_1^2z_1^5\\ w_1^2z_1^4z_2\\ w_1^2z_1^3z_2^2\\ w_1^2z_1^2z_2^3\\  
w_2^2z_1^5\\ w_2^2z_1^4z_2\\ w_2^2z_1^3z_2^2\\ w_2^2z_1^2z_2^3
\end{bmatrix}.
$$
(As before, empty spaces are filled with zeros.) The next two steps yield
$$
\begin{bmatrix}
w_1^2z_1^5\\ w_1^2z_1^4z_2\\ w_1^2z_1^3z_2^2\\ w_1^2z_1^2z_2^3\\  
w_2^2z_1^5\\ w_2^2z_1^4z_2\\ w_2^2z_1^3z_2^2\\ w_2^2z_1^2z_2^3
\end{bmatrix}
=\begin{bmatrix}
a_2 & a_1 & a_0 &  & & & &  \\
& a_2 & a_1 & a_0 &  & & & \\
 & & & &a_2 &a_1 &a_0 &  \\
 & & & & &a_2 &a_1 &a_0  \\
b_2 & b_1 & b_0 &  & & & &  \\
& b_2 & b_1 & b_0 &  & & & \\
& & & &b_2 &b_1 &b_0 &  \\
 & & & & &b_2 &b_1 &b_0  
\end{bmatrix} \begin{bmatrix}
w_1z_1^7\\ w_1 z_1^6z_2\\ w_1 z_1^5z_2^2\\ w_1 z_1^4z_2^3\\  
w_2 z_1^5z_2^2\\ w_2z_1^4z_2^3\\ w_2z_1^3z_2^4\\ w_2z_1^2z_2^5
\end{bmatrix},
$$
and the last two steps yield
$$
\begin{bmatrix}
w_1z_1^7\\ w_1 z_1^6z_2\\ w_1 z_1^5z_2^2\\ w_1 z_1^4z_2^3\\  
w_2 z_1^5z_2^2\\ w_2z_1^4z_2^3\\ w_2z_1^3z_2^4\\ w_2z_1^2z_2^5
\end{bmatrix}
=\begin{bmatrix}
a_2 & a_1 & a_0 &  & & & &  \\
& a_2 & a_1 & a_0 &  & & & \\
 & & & &a_2 &a_1 &a_0 &  \\
 & & & & &a_2 &a_1 &a_0  \\
b_2 & b_1 & b_0 &  & & & &  \\
& b_2 & b_1 & b_0 &  & & & \\
& & & &b_2 &b_1 &b_0 &  \\
 & & & & &b_2 &b_1 &b_0  
\end{bmatrix} \begin{bmatrix}
z_1^9\\ z_1^8z_2\\ z_1^7z_2^2\\ z_1^6z_2^3\\  
z_1^5z_2^4\\ z_1^4z_2^5\\ z_1^3z_2^6\\ z_1^2z_2^7
\end{bmatrix}.
$$
Altogether there are 6 copies of the resultant matrix.  Taking determinants, the right-hand side of equation (\ref{eqn:Respower}) for our example reads 
$O(1)^{16}|\Res(\hat f)|^6$. Note that the power of the $O(1)$ term is negligible compared to the power of the resultant for large $\ell$, although it happens to be larger here (where $\ell=4$).  The remaining two monomials, $z_1z_2^8$ and $z_2^9$ (whose rows are not shown), remain the same throughout.
\end{example}

Theorem \ref{thm:resthm} now follows by taking a limit.

\begin{proof}[Proof of Theorem \ref{thm:resthm}]
Suppose for each $n\in\NN$ we take Fekete points $\zeta_{1},\ldots,\zeta_{m_n}$ in $L$ (i.e., points that maximize the determinant). Then by the above Proposition, 
\begin{equation}\label{eqn:last}
|\Van_{\calC,m_n}(L)|^{1/l_n} = \left(O(1)^{o(n^3)}|\Res(\hat f)|^{\frac{dn^3}{6}+o(n)} |\Van_{\calA,m_n}(L)|\right)^{1/l_n}.
\end{equation}
By Lemma \ref{lem:limC} we have $l_n=\frac{n^3d^2}{3}+ o(n^3)$.  As $n\to\infty$,  
$$\begin{aligned}
O(1)^{o(n^3)/l_n} &\to 1, \\ 
|\Van_{\calC,m_n}(L)|^{1/l_n}& \to d^{(2)}(L) = d(K), \\
|\Van_{\calA,m_n}(L)|^{1/l_n} = \left(|\Van_{\calA,m_n}(K)|^{\frac{1}{ (nd)^3+o(n)}}\right)^{\frac{(nd)^3+o(n)}{l_n}}  &\to
\left(d^{(1)}(L)\right)^d = \left( d(f^{-1}(K))\right)^d, \\
|\Res(\hat f)|^{\left(\frac{dn^3}{6}+o(n)\right)/l_n}&\to |\Res(\hat f)|^{1/(2d) }.
\end{aligned} $$
(Note that in the third line, a root of $(nd)^3+o(n)$ is the appropriate one for $d^{(1)}(L)$, where we take the sum of the degrees of all monomials in  $\CC[z]_{\leq nd}$.)  The theorem follows from these limits by letting $n\to\infty$ on both sides of (\ref{eqn:last}).  
\end{proof}

Since the pullback formula depends only on the resultant of the leading homogeneous part of the polynomial mapping, we have the following immediate consequence,  which is a special case of Theorem 5 in \cite{bloomcalvi:multivariate}.
\begin{corollary}
Let $f\colon\CC^2\to\CC^2$ be a regular polynomial mapping with leading homogeneous part $\hat f$.  Then 
$d(f^{-1}(K))=d(\hat f^{-1}(K))$.   \qed
\end{corollary}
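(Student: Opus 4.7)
The plan is to derive the corollary directly from Theorem \ref{thm:resthm} by applying it twice: once to $f$ and once to $\hat f$. First I would observe that $\hat f$ itself is a regular polynomial mapping of degree $d$: by definition $f$ is regular means $\hat f^{-1}(0,0)=\{(0,0)\}$, and since $\hat f$ is already homogeneous of degree $d$, its own leading homogeneous part is just $\hat f$ itself, so the regularity condition for $\hat f$ is identical to that for $f$.

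Next I would apply Theorem \ref{thm:resthm} to $f$ to obtain
\begin{equation*}
d(f^{-1}(K)) = |\Res(\hat f)|^{-1/(2d^2)}d(K)^{1/d},
\end{equation*}
and then apply the same theorem to $\hat f$ (whose leading homogeneous part is $\widehat{\hat f}=\hat f$) to obtain
\begin{equation*}
d(\hat f^{-1}(K)) = |\Res(\widehat{\hat f})|^{-1/(2d^2)}d(K)^{1/d} = |\Res(\hat f)|^{-1/(2d^2)}d(K)^{1/d}.
\end{equation*}
Comparing the two right-hand sides yields $d(f^{-1}(K))=d(\hat f^{-1}(K))$.

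There is essentially no obstacle here beyond the bookkeeping observation that $\widehat{\hat f}=\hat f$ and that regularity transfers from $f$ to $\hat f$ trivially; the substantive work has already been done in Theorem \ref{thm:resthm}. If anything, the only subtle point worth mentioning explicitly is that the resultant term on the right-hand side of the pullback formula depends only on $\hat f$ and not on the lower-order terms of $f$, so that passing from $f$ to $\hat f$ leaves the formula invariant.
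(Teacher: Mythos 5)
Your proof is correct and is exactly the argument the paper intends: the corollary is stated as an immediate consequence of the pullback formula because the right-hand side depends only on $\Res(\hat f)$, which is unchanged when $f$ is replaced by $\hat f$ (since $\widehat{\hat f}=\hat f$ and $\hat f$ is regular iff $f$ is). No further comment is needed.
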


\section{Final Remarks}\label{sec:final}

It seems straightforward to adapt the method of this paper to relate transfinite diameters on $K$ and $f^{-1}(K)$ in  other situations, e.g.,
\begin{itemize}
\item when using more general notions of transfinite diameter, such as the so-called $C$-transfinite diameter defined using a convex body \cite{mau:transfinite}, or weighted transfinite diameter \cite{bloomlev:transfinite}; 
\item when $f=(f_1,f_2)$ is not necessarily regular, or $f_1,f_2$ are of different degrees.
\end{itemize}
The idea is to lift $K$ to the graph $V$ and study projections to $w$ and $z$.  The $z$ variables may be eliminated using symmetry and the $w$ variables may be eliminated using substitution; patterns in the coefficients due to substitution then give rise to normalization factors.  It would be interesting to see what other types of resultants arise in this way.

%Fix $n>d$.  By the division algorithm, $n=qd+r$ where $r<d$.  Then any monomial 
%$$w^{\alpha}z^{\beta} =  \sum_{j=0}^d a_{j(d-j)}z$$

\bigskip

\noindent{\bf Acknowledgement.} I would like to thank the referee for pointing out an issue in the original proof of Proposition \ref{prop:613}. 

%\bibliographystyle{abbrv}
%\bibliography{myreferences2021}

%\end{document}

\end{document}